\documentclass{amsart}
\usepackage{amsfonts}

\setcounter{MaxMatrixCols}{10}

\newtheorem{theorem}{Theorem}
\theoremstyle{plain}

\newtheorem{conjecture}{Conjecture}
\newtheorem{corollary}{Corollary}

\newtheorem{definition}{Definition}
\newtheorem{example}{Example}

\newtheorem{lemma}{Lemma}

\newtheorem{remark}{Remark}

\numberwithin{equation}{section}
\input{tcilatex}

\begin{document}
\title[EIP on $S(n,m)$]{The Edge-Isoperimetric Problem on Sierpinski
Graphs:\ Final Resolution}
\author{L. H. Harper}
\address{Department of Mathematics\\
University of California-Riverside\\
Riverside, CA 92521}
\email{harper@math.ucr.edu}
\date{February 15, 2018}
\subjclass[2010]{Primary 05C35, 90C27; Secondary 05C78, 68M07}
\keywords{Isoperimetric problem, Sierpinski graph, Steiner operations}

\begin{abstract}
This paper completes the project started in \cite{Har16}; to solve the
edge-isoperimetric problem on the (generalized and extended) Sierpinski
graph, S(n,m). We prove that initial segments of lexicographic order are
solutions of the $EIP$ for all $n,m$.
\end{abstract}

\maketitle

\section{ Introduction}

\subsection{Motivation}

William, Rajasingh, Rajan \& Shanthakumari \cite{W-R-R-S} proposed the
Sierpinski pyramid graph, $S\left[ n,4\right] $ as the connection graph of a
multiprocessor computer. In their conclusion they suggest studying $S\left[
n,4\right] $ for its "message routing and broadcasting" properties. This
paper is the last of three (\cite{Har16}, \cite{H-H-L}) following up on that
suggestion. The Edge-Isoperimetric Problem ($EIP$, see \cite{Har04}) is of
interest for connection graphs of multiprocessor computers because it has
implications for message routing and broadcasting.

\subsection{Basic Definitions}

\begin{definition}
An \textit{ordinary} \textit{graph, }$G=\left( V,E\right) $ consists of a
set $V$, of \textit{vertices} and a set $E\subseteq \binom{V}{2}=\left\{
\left\{ v,w\right\} :v,w\in V,v\neq w\right\} $, of pairs of vertices called 
\textit{edges}.
\end{definition}

\begin{example}
\bigskip $K_{m}$, \textit{the complete graph on }$m$ vertices has $%
V_{K_{m}}=\left\{ 0,1,2,...,m-1\right\} $ and $E_{K_{m}}=\binom{V_{K_{m}}}{2}
$.
\end{example}

\begin{example}
The (disjunctive) product, $K_{m}\times K_{m}\times ...\times
K_{m}=K_{m}^{n} $ is called the Hamming graph. $V_{K_{m}^{n}}=\left\{
0,1,2,...,m-1\right\} ^{n}$. Two vertices ($n$-tuples of vertices of $K_{m}$%
) have an edge between them if they differ in exactly one coordinate (%
\textit{i.e. }are at Hamming distance $1$). Note that $K_{2}^{n}=Q_{n}$, the
graph of the $n$-dimensionsal cube.
\end{example}

\subsubsection{The Edge-Isoperimetric Problem}

The Edge-Isoperimetric Problem (EIP) is a combinatorial analog of the
classical isoperimetric problem: Given a graph, $G=\left( V,E\right) $ and $%
S\subseteq V$, 
\begin{equation*}
\Theta \left( S\right) =\left\{ \left\{ v,w\right\} \in E:v\in S\text{ }\&%
\text{ }w\notin S\right\}
\end{equation*}%
is called the \textit{edge-boundary of }$S$. Then the $EIP$ is to calculate $%
\left\vert \Theta \right\vert \left( G;\ell \right) =\min \left\{ \left\vert
\Theta \left( S\right) \right\vert :S\subseteq V_{G}\&\left\vert
S\right\vert =\ell \right\} $ for every integer $\ell $, $0\leq \ell \leq
\left\vert V\right\vert $, and identify sets that achieve the minimum.

\begin{example}
For $K_{m}$, \textit{the complete graph on }$m$ vertices, any $S\subseteq
V_{K_{m}}$ with $\left\vert S\right\vert =\ell $ has $\left\vert \Theta
\left( S\right) \right\vert =\ell \left( m-\ell \right) $. Thus every $\ell $%
-set is a solution of the EIP for $K_{m}$.
\end{example}

\begin{example}
Initial $\mathbf{\ell }$-segments of $V_{K_{m}^{n}}$ in Lexicographic order, 
\begin{equation*}
\left\{ 0^{n},0^{n-1}1,...,\mathbf{\ell }_{1}\ell _{2}...\ell _{m}\right\}
\end{equation*}%
where $\ell =1+\sum_{i=1}^{m}\ell _{i}m^{n-i}$, are solutions of the EIP on $%
K_{m}^{n}$ (proved for $m=2$ by the author in 1962 and for $m>2$ by John
Lindsay in 1963). See \cite{Har04} for more on the $EIP$.
\end{example}

\begin{definition}
The function $\left\vert \Theta \right\vert \left( G;\ell \right) $ is
called the (edge-)isoperimetric profile of $G$.
\end{definition}

\begin{example}
The isoperimetric profile of $K_{m}$ is $\left\vert \Theta \right\vert
\left( K_{m};\ell \right) =\ell \left( m-\ell \right) $.
\end{example}

\subsection{The Sierpinski Graph}

The \textit{generalized \& extended Sierpinski graph, }$S(n,m)$, $n\geq 1$, $%
m\geq 2$, was defined in 1944 by Scorer, Grundy and Smith \cite{S-G-S}. They
showed that $S(n,3)$ is the graph of the Tower of Hanoi puzzle with $n$
discs (see \cite{H-K-M-P} for its colorful history). The following
representation of $S(n,m)$, implicit in the Scorer-Grundy-Smith paper, was
made explicit by Klav\v{z}ar and Milutinovi\'{c} in 1997 \cite{K-M}: $%
V_{S(n,m)}=\left\{ 0,1,...,m-1\right\} ^{n}$. For $\left\{ u,v\right\} \in 
\binom{V}{2}$, $\left\{ u,v\right\} \in E_{S(n,m)}$ iff $\exists h\in
\left\{ 1,2,...,n\right\} $ such that following 3 conditions hold:

\begin{enumerate}
\item $u_{i}=v_{i}$ for $i=1,2,...h-1$;

\item $u_{h}\neq v_{h}$; and

\item $u_{j}=v_{h}$ and $v_{j}=u_{h}$ for $j=h+1,...,n$.
\end{enumerate}

\begin{remark}
\bigskip The vertices of $\ S(n,m)$ are of degree $m$ except for those of
the form $i^{n}$ (called "corner vertices"), that are of degree $m-1$.
\end{remark}

\begin{conjecture}
Initial $\mathbf{\ell }$-segments of $V_{S(n,m)}$ in Lexicographic order,
are solutions of the EIP on $S(n,m).$
\end{conjecture}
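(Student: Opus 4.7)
The plan is to prove the conjecture by induction on $n$, exploiting the recursive structure of $S(n,m)$ as $m$ copies of $S(n-1,m)$ joined by a small number of bridge edges. The base case $n=1$ is immediate, since $S(1,m)=K_{m}$ and every $\ell$-set is optimal by the earlier example. For the inductive step I would partition $V_{S(n,m)}$ into $m$ sub-copies $V^{(i)}=\{i\}\times\{0,\ldots,m-1\}^{n-1}$, each inducing a copy of $S(n-1,m)$, and observe that the only edges between distinct $V^{(i)},V^{(j)}$ are the $\binom{m}{2}$ bridge edges joining the corner $(i,j,j,\ldots,j)$ of $V^{(i)}$ to the corner $(j,i,i,\ldots,i)$ of $V^{(j)}$.

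Given a candidate optimal set $S$ with $|S|=\ell$ and cell-sizes $\ell_{i}=|S\cap V^{(i)}|$, the first Steiner operation replaces each $S\cap V^{(i)}$ by the initial lex $\ell_{i}$-segment of $V^{(i)}$; by the induction hypothesis this can only decrease the internal edge-boundary. The key property of lex initial segments in $S(n-1,m)$ that I would extract from the induction is the \emph{corner profile}: the corner $i^{n-1}$ lies inside the segment as soon as $\ell_{i}\geq 1$, and the remaining corners $j^{n-1}$ enter only at specific threshold sizes that are multiples of $m^{n-2}$. This controls how each $S\cap V^{(i)}$ interacts with the bridge edges.

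The problem then reduces to the one-dimensional optimization
\[
F(\ell_{0},\ldots,\ell_{m-1})=\sum_{i=0}^{m-1}\left\vert\Theta\right\vert\bigl(S(n-1,m);\ell_{i}\bigr)+B(\ell_{0},\ldots,\ell_{m-1})
\]
subject to $\sum_{i}\ell_{i}=\ell$, where $B$ counts the bridge edges landing in $\Theta(S)$. The target is to show that the greedy distribution $\ell_{0}=\min(\ell,m^{n-1})$, $\ell_{1}=\min(\ell-\ell_{0},m^{n-1})$, and so on, is optimal; this distribution is precisely the cell-profile of the lex initial $\ell$-segment of $V_{S(n,m)}$. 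A second Steiner operation—shifting mass from a partially-filled sub-copy of higher index to one of lower index—should preserve or decrease $F$, driving any optimum to the greedy form. The verification relies on showing that, for fixed total $\ell_{i}+\ell_{j}$, transferring a vertex from $V^{(j)}$ to $V^{(i)}$ never increases the internal term by more than it saves on the bridge term, which is where the corner profile becomes essential.

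The hard part, I expect, is the bridge accounting across the discontinuous thresholds. When a sub-copy $V^{(i)}$ passes from partially to completely filled, up to $m-1$ of its bridge edges flip status simultaneously, and this must be matched by the jumps of $\left\vert\Theta\right\vert(S(n-1,m);\cdot)$ at sizes $km^{n-2}$ where new corners enter the lex segment of the neighbor. Proving that the greedy distribution beats every competitor thus amounts to a delicate monotonicity/concavity statement about the inductive profile $\left\vert\Theta\right\vert(S(n-1,m);\cdot)$ near these threshold sizes. I anticipate needing either a secondary induction on $m$ that tracks corner occupancy explicitly, or an exchange argument localized to pairs $(i,j)$ of sub-copies, coupled with the identification of $\left\vert\Theta\right\vert(S(n-1,m);m^{n-1})=m-1$ (the number of outgoing bridges) to close the bookkeeping.
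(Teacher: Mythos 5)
Your outline follows the same broad strategy as the paper (induction on $n$ via the self-similar decomposition into $m$ sub-copies, a compression step inside each sub-copy, then an optimization over the cell-size vector $(\ell_0,\dots,\ell_{m-1})$), but there are two genuine gaps. The first is in the compression step. You claim that replacing $S\cap V^{(i)}$ by the initial lex $\ell_i$-segment ``can only decrease the internal edge-boundary'' and then separately invoke the corner profile to control the bridges. But a valid Steiner operation must not increase the \emph{total} boundary, internal plus bridge, and the naked induction hypothesis for $S(n-1,m)$ only speaks to internal edges. A lex segment that minimizes the internal boundary of a sub-copy may a priori cut more bridge edges than the set it replaces, because the bridges attach at the $m-1$ distinguished corner vertices and their in/out status depends on the contents of the \emph{other} sub-copies. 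This is exactly why the paper does not induct on Conjecture 1 directly: it strengthens the statement to Conjecture 2, about the decorated graphs $S_{s,t}(n,m)$ in which $s$ corners are forced ``in'' and $m-s-t$ are forced ``out'' via exterior edges, and proves that lex is optimal for \emph{every} $(s,t)$ simultaneously. Only with that stronger induction hypothesis does compression (the paper's $Comp_{Lex_h}$, with the renumbering of corners according to which bridges land in $S$) become a genuine Steiner operation. Without this strengthening your inductive step does not close.

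The second gap is the one you yourself flag: the ``delicate monotonicity/concavity statement about the inductive profile'' needed to show the greedy distribution wins. This is precisely the paper's Conjecture 3, that $\left\vert\Theta\right\vert(S(n,m);\cdot)$ is subadditive plus the correction term $\sigma_{n,m}(\ell_a,\ell_b)$ built from the corner-counting function $q_{n,m}$; its proof is a separate induction with a sixteen-case analysis and constitutes the bulk of the paper. Note also that the paper's exchange (``subadditivation'') moves the \emph{entire} content of the highest nonempty sub-copy into the lowest non-full one in a single step --- that is what a subadditivity inequality licenses --- whereas your proposal of transferring one vertex at a time would require local monotonicity of the profile near the thresholds, which fails (the profile is not concave). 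Finally, a small but telling slip: $\left\vert\Theta\right\vert(S(n-1,m);m^{n-1})=0$, not $m-1$; the $m-1$ outgoing bridges of a full sub-copy live in $S(n,m)$, not in the sub-copy's own profile, which again reflects the unmade separation between internal and exterior edges that the decorated graphs are designed to handle.
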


\begin{remark}
These are the same sets that solve the $EIP$ on the Hamming graph, $%
K_{m}^{n} $. $V_{S(n,m)}=\left\{ 0,1,...,m-1\right\} ^{n}=V_{K_{m}^{n}}$ but
the edge-sets for $K_{m}^{n}$ and $S(n,m)$ are quite different. It is
possible to embed $S(n,m)$ into $K_{m}^{n}$ (see \cite{Har15'}), but then
the representation of $\ $edges in $S(n,m)$ is not the same. Graph theory is
about properties of graphs (chromatic number, clique number, hamiltonicity, 
\textit{etc.) that are independent of the representation of the graph
(invariant under isomorphisms). The standard families of graphs (such as }$%
Q_{n},C_{n},K_{m}$) have just one representation. $S(n,m)$ is unusual in
having three different representations that are not obviously isomorphic.
That the Klav\v{z}ar-Milutinovi\'{c} representation of $S(n,m)$ is the right
one for this paper is evidenced by Conjecture 1.

When graphs arise in applications, $V$ is often some set of structures (such
as positions in the Tower of Hanoi puzzle) and $E$ some set of pairs of
those structures determined by a symmetric relation (a legal move from
position $v$ to position $w$). It is surprisingly difficult to determine
whether two such graphs are isomorphic (See Wikipedia entry , "Graph
isomorphism problem").
\end{remark}

Given $s,t\in \mathbb{N}$, $s+t\leq m$, let 
\begin{eqnarray*}
I &=&\left\{ 0,1,...,s-1\right\} , \\
J &=&\left\{ s,s+1,...,s+t-1\right\} , \\
K &=&\left\{ s+t,s+t+1,...,m-1\right\} ,
\end{eqnarray*}%
so $\left\vert I\right\vert =s,$ $\left\vert J\right\vert =t$ $\&$ $%
\left\vert K\right\vert =m-s-t$. Consider $S_{s,t}(n,m)$ to be the graph $%
S(n,m)$ decorated with "exterior" edges attached to the corner vertices. If $%
i\in I+K$ then $\left\{ v_{i},i^{n}\right\} \in E_{S_{s,t}(n,m)}$ and when
computing $\left\vert \Theta _{s,t}\left( S\right) \right\vert $, $%
S\subseteq \left\{ 0,1,...,m-1\right\} ^{n}$, we consider $v_{i}$ to be a
member of $S$ if $i\in I$ but to be in the complement of $S$ if $i\in K$.
Vertices $j^{n}$ for $j\in J$ are stilll regarded as corner vertices, not
incident to an "exterior" edge. We call $S_{s,t}(n,m)$ a \textit{decorated
Sierpinski graph}.

In \cite{Har16} the following conjecture was stated:

\begin{conjecture}
$\forall \ell ,$ $0\leq \ell \leq m^{n},$ $\left\vert \Theta
_{s,t}\right\vert \left( S_{s,t}(n,m);\ell \right) =$ $\left\vert \Theta
_{s,t}\left( Lex^{-1}\left( n,m;\ell \right) \right) \right\vert $ where $%
Lex^{-1}\left( n,m;\ell \right) =Lex^{-1}(\left\{ 1,2,...,\ell \right\} )$
in $S_{s,t}(n,m)$.
\end{conjecture}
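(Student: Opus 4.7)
My plan is to prove Conjecture 2 by induction on $n$. The base case $n=1$ collapses $S_{s,t}(1,m)$ to a copy of $K_{m}$ with $s$ exterior pendant edges on the $I$-side and $m-s-t$ on the $K$-side; combining the known profile $|\Theta|(K_{m};\ell)=\ell(m-\ell)$ with the pendant contributions, one verifies directly that the lex-initial segment $\{0,1,\ldots,\ell-1\}$ attains the minimum for every $\ell$.

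For the inductive step, I would exploit the recursive decomposition
\[
V_{S(n,m)}=\bigsqcup_{i=0}^{m-1}\bigl(\{i\}\times V_{S(n-1,m)}\bigr),
\]
in which each fiber $\{i\}\times V_{S(n-1,m)}$ induces a copy of $S(n-1,m)$ and distinct fibers $i\neq j$ are joined by the single edge $\{ij^{n-1},ji^{n-1}\}$. Given $S\subseteq V_{S(n,m)}$ with $|S|=\ell$, write $S=\bigcup_{i}\{i\}\times S_{i}$ and set $\ell_{i}=|S_{i}|$. Each fiber $i$ then inherits its own decoration $(s_{i},t_{i})$ determined by (a) the value of $i$ relative to the ambient blocks $I,J,K$, which dictates the external status of the self-corner $i^{n}$, and (b) for each $u\neq i$, whether $ui^{n-1}\in S$, which dictates the external status of the cross-corner $iu^{n-1}$ as seen from inside fiber $i$. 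Every edge of $S_{s,t}(n,m)$ lies in exactly one fiber subproblem, so $|\Theta_{s,t}(S)|$ splits cleanly as $\sum_{i}|\Theta_{s_{i},t_{i}}(S_{i})|$, using (after a symmetry-based rearrangement of the alphabet $\{0,\ldots,m-1\}$) the automorphism invariance of the profile in the positions of $I'$ and $K'$.

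Applying the inductive hypothesis to each fiber replaces $|\Theta_{s_{i},t_{i}}(S_{i})|$ by $|\Theta_{s_{i},t_{i}}|(S_{s_{i},t_{i}}(n-1,m);\ell_{i})$, reducing the problem to a finite optimization over the distribution vector $(\ell_{0},\ldots,\ell_{m-1})$ constrained by $\sum_{i}\ell_{i}=\ell$, together with the self-consistency requirement that the cross-corner memberships used to define the $s_{i},t_{i}$ agree with what the lex-segment inside each fiber actually places there. The main obstacle I expect is closing this fixed-point loop: the per-fiber decoration depends on what other fibers place at their corners, while the inductive hypothesis presupposes that each fiber is already a lex-segment. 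I plan to handle this with Steiner-type exchange operations in the spirit of \cite{Har16}: a sequence of local moves, each weakly decreasing $|\Theta_{s,t}(S)|$, transforms any candidate $S$ into $Lex^{-1}(n,m;\ell)$. Combined with a monotonicity/convexity property of the one-variable profile $\ell_{i}\mapsto|\Theta_{s_{i},t_{i}}|(S_{s_{i},t_{i}}(n-1,m);\ell_{i})$ --- itself accessible from the inductive hypothesis --- this forces the optimum to be realized precisely by filling fibers $0,1,\ldots,m-1$ in lexicographic order, with at most one partially filled fiber at the boundary, completing the induction.
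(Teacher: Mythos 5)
Your overall architecture matches the paper's: induction on $n$, the fiber decomposition into $m$ copies of $S(n-1,m)$ with induced decorations $(s_i,t_i)$, compression of each fiber to a lex-initial segment via the inductive hypothesis (with the fixed-point issue resolved by iterating the compressions cyclically until stable), and then a final exchange step on the distribution vector $(\ell_0,\dots,\ell_{m-1})$. But the proposal has a genuine gap precisely at that final step, which is where essentially all of the difficulty of the problem lives. You propose to close the optimization over distribution vectors using ``a monotonicity/convexity property of the one-variable profile.'' The profile $\ell\mapsto\left\vert\Theta\right\vert(S(n,m);\ell)$ is neither monotone (it vanishes at both $\ell=0$ and $\ell=m^{n}$) nor convex (already for $n=1$ it is $\ell(m-\ell)$, which is concave), so no such property is available, and it is certainly not ``accessible from the inductive hypothesis'' of Conjecture 2. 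What is actually required is a \emph{strengthened subadditivity} inequality: when the contents of the least-full and most-full fibers are merged, the internal-edge saving $\left\vert\Theta\right\vert(n,m;\ell_a)+\left\vert\Theta\right\vert(n,m;\ell_b)-\left\vert\Theta\right\vert(n,m;\ell_a+\ell_b)$ must dominate the worst-case increase coming from the exterior (corner) edges, which is the explicit quantity $\sigma_{n,m}(\ell_a,\ell_b)$ built from the corner-counting function $q_{n,m}$. This is exactly Conjecture 3 of the paper, whose proof is a separate induction with sixteen cases and constitutes the bulk of the work; without it (or some equivalent quantitative input) the exchange argument cannot be completed, and the paper records that this very point blocked earlier attempts for $m>3$.

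A secondary, fixable inaccuracy: the claim that every edge of $S_{s,t}(n,m)$ lies in exactly one fiber subproblem, so that $\left\vert\Theta_{s,t}(S)\right\vert=\sum_i\left\vert\Theta_{s_i,t_i}(S_i)\right\vert$, double-counts the cut cross-fiber edges $\{ij^{n-1},ji^{n-1}\}$, since each such edge appears as an exterior edge of both fiber $i$ and fiber $j$. The paper avoids any such global splitting: compression only compares boundaries before and after modifying a single fiber with the others held fixed, and the subadditivation step bounds the change in exterior edges directly by $\sigma_{n,m}$.
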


The original goal of \cite{Har16} was to prove Conjecture 1, which
corresponds to the case $s=0$, $t=m$ ($S_{0,m}(n,m)=S(n,m)$, the unaugmented
Sierpinski graph). The extensions of $S(n,m)$ to $S_{s,t}(n,m)$ and of
Conjecture 1 to Conjecture 2 were made to facilitate the definition of
compression, a Steiner operation for $EIP$ based on self-similarity.

In \cite{Har16} Conjecture 2 was proved for $m=2$ and $\forall n$ (which is
trivial) and for $m=3$ and $\forall n$ (which is not trivial). In a followup
paper (\cite{H-H-L}) we set out to prove it for all $\forall n,\forall m$.
We succeeded in advancing the theory, but were only able to verify the
ultimate sufficient condition (Conjecture 3 below) with the aid of a
computer, showing that Conjecture 2, and therefore Conjecture 1, holds for $%
n,m$ such that $n+m\leq 16$. In this paper we prove Conjecture 2 and
therefore Conjecture 1.

We shall repeat relevant definitions and theorems of \cite{Har16} but not
proofs.

\subsection{A Three StOp Proof}

Our approach to proving Conjecture 1 was modeled on the first solution of
the $EIP$ for $Q_{n}$, the graph of the $n$-dimensional cube (Theorem 1.1 in
the monograph \cite{Har04}). That proof is essentially the author's first
publication (1964) \cite{Har64} as corrected by A. J. Bernstein \cite{Ber}
and streamlined with further study. It used three Steiner operations:
Stabilization (based on reflective symmetry of $Q_{n}$), compression (based
on a direct product decomposition with factors having nested solutions) and
Bernstein's Lemma. It was only included in the monograph to show the roots
of the theory of Steiner operations (StOps) and provide a contrast with the
relatively transparent reproof. The development of "pushouts" for multiple
stabilizations and compressions made those StOps much more powerful.
Bernstein's Lemma was no longer needed to solve the $EIP$ on the graph of
the $n$-cube and the original 3-StOp proof was reduced to two StOps. The
theory (pushouts in particular) also applied to many related graphs (such as
the Hamming graph, $K_{m}^{n}=K_{m}\times K_{m}\times ...\times K_{m}$).
However, the Sierpinski graph, $S(n,m)$, has little symmetry compared to the
Hamming graph, so stabilization is comparitively ineffectual. Also, $S(n,m)$%
, is not factorable as a product ($K_{m}^{n}$ has many factorizations) so
compression, as defined in \cite{Har04} does not apply at all. $S(n,m)$ is
self-similar though, a disjoint union of $m$ copies of $S(n-1,m)$ with edges
between their corner vertices. This suggests the possibility of extending
the definition of the crucial operation of compression to $S(n,m)$. In
looking back over the literature of the $EIP$ we realized that the original
proof had treated the graph of the $n$-cube, $Q_{n}=K_{2}^{n}$, as a
self-similar graph, so the original 3-StOp proof might be extended to $%
S(n,m) $. However, it would only work if Bernstein's Lemma could be
extended. Bernstein's Lemma states that the isoperimetric profile of $Q_{n}$
is subadditive. That led to a third Steiner operation that we call
"subadditivation". All three StOps, stabilization, compression and
subaddivation, had to be extensively modified in \cite{Har16}, \cite{H-H-L}
to work for $S(n,m)$. Ultimately, the complexity of proving the required
subadditivity (Conjecture 3 below) for the isoperimetric profile of $S(n,m)$%
, $m>3$, stymied those efforts.

\subsection{New Definitions}

There are two important parameters in the analysis of lexicographic order on 
$V_{S(n,m)}$, $k_{n,m}(\ell )$ and $q_{n,m}\left( \ell \right) $:

\begin{definition}
For $0\leq \ell \leq m^{n},$ $k_{n,m}(\ell )$ is\textit{\ }the number of
subgraphs of the form $\left\{ i\right\} \times S(n-1,m)$ in $Lex^{-1}\left(
n,m;\ell \right) $.
\end{definition}

\begin{lemma}
$k_{n,m}(\ell )=\left\lfloor \frac{\ell }{m^{n-1}}\right\rfloor $
\end{lemma}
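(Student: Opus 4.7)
The plan is to unfold the definition of lexicographic order on $V_{S(n,m)} = \{0,1,\ldots,m-1\}^n$ and read off the count directly, using division with remainder. The key observation is that lex order lists all vertices whose first coordinate is $0$ (in total $m^{n-1}$ of them), then all vertices whose first coordinate is $1$, and so on. In other words, if we let $B_i = \{i\}\times\{0,\ldots,m-1\}^{n-1}$ denote the $i$-th ``slice'', then in lex order the vertices of $B_i$ occupy ranks $i m^{n-1}+1,\, i m^{n-1}+2,\,\ldots,\,(i+1)m^{n-1}$.

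I would then write $\ell = q\, m^{n-1} + r$ with $0 \leq r < m^{n-1}$. From the rank description above, $\mathrm{Lex}^{-1}(n,m;\ell)$ contains the slice $B_i$ in its entirety iff $(i+1)m^{n-1} \leq \ell$, i.e.\ iff $i \leq q-1$. Thus the slices $B_0,B_1,\ldots,B_{q-1}$ are fully present and (if $r>0$) only the initial $r$ vertices of $B_q$ appear; no subsequent slice $B_i$ with $i \geq q+1$ meets the segment. Since each subgraph of the form $\{i\}\times S(n-1,m)$ is supported exactly on one slice $B_i$, the number of such subgraphs contained in $\mathrm{Lex}^{-1}(n,m;\ell)$ equals $q = \lfloor \ell/m^{n-1} \rfloor$.

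There is essentially no obstacle here beyond bookkeeping: one just has to be slightly careful with the boundary case $r=0$, in which $B_{q-1}$ is the last slice fully inside the segment and $B_q$ contributes nothing, so the count is still $q$. A similar remark disposes of the two trivial extremes $\ell = 0$ (giving $q=0$) and $\ell = m^n$ (giving $q=m$, so that all $m$ slices are contained).
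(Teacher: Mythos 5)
Your argument is correct and is exactly the intended one: the paper states this lemma without proof, treating it as immediate from the fact that lexicographic order lists the $m$ slices $\{i\}\times\{0,\dots,m-1\}^{n-1}$ consecutively, each of size $m^{n-1}$. Your division-with-remainder bookkeeping, including the boundary case $r=0$, supplies precisely the omitted details.
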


\begin{remark}
$k_{n,m}$ is monotone increasing and $0\leq k_{n,m}(\ell )\leq m$.
\end{remark}

Let $C_{n,m}=\left\{ i^{n}:i=0,1,...,m-1\right\} $ (the set of corner
vertices of $S(n,m))$.

\begin{remark}
$Lex\left( i^{n}\right) =1+\sum_{j=1}^{n}im^{n-j}$ $=1+i\frac{m^{n}-1}{m-1}$
gives the numbers, $1$, $1+\frac{m^{n}-1}{m-1}$, $1+2\frac{m^{n}-1}{m-1}$%
,..., $m^{n}$, assigned to those corner vertices in lexicographic order.
\end{remark}

\begin{definition}
For $0\leq \ell \leq m^{n},$ $q_{n,m}(\ell )=\left\vert Lex^{-1}\left(
m,n;\ell \right) \cap C_{n,m}\right\vert $, the number of corner vertices
with $Lex\left( v\right) \leq \ell $.
\end{definition}

\begin{remark}
$q_{n,m}$ is monotone increasing and $0\leq q_{n,m}(\ell )\leq m$. Also, $%
k_{n,m}(\ell )\leq q_{n,m}(\ell )\leq k_{n,m}(\ell )+1$ with $%
k_{n,m}(0)=0=q_{n,m}(0)$ and $k_{n,m}(m^{n})=m=q_{n,m}(m^{n})$. Lastly, 
\begin{eqnarray*}
q_{n,m}(m^{n}-\ell ) &=&\left\vert C_{n,m}-Lex^{-1}\left( n,m;\ell \right)
\cap C_{n,m}\right\vert \\
&=&m-q_{n,m}(\ell ).
\end{eqnarray*}
\end{remark}

\begin{lemma}
For all $m\geq 2$, $q_{0,m}(\ell )=0$ for $\ell =0,1$. For all $n\geq 1$ and 
$0\leq \ell \leq m^{n}$, $q_{n,m}(\ell )=\ 1+\left\lfloor \frac{\left( \ell
-1\right) \left( m-1\right) }{m^{n}-1}\right\rfloor $.
\end{lemma}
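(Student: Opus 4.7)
The plan is to reduce everything to the explicit formula $Lex(i^{n})=1+i\frac{m^{n}-1}{m-1}$ noted in the remark immediately preceding this lemma, and then just count integer solutions of a linear inequality.

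First I would dispose of $n=0$: the only vertex of $S(0,m)$ is the empty tuple, which is not treated as a corner vertex in the sense needed for the inductive machinery, so by convention $q_{0,m}(0)=q_{0,m}(1)=0$. This is why that case is stated separately in the lemma.

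For the main case $n\geq 1$, the definition of $q_{n,m}$ gives
\begin{equation*}
q_{n,m}(\ell )=\bigl|\{i\in \{0,1,\dots ,m-1\}:Lex(i^{n})\leq \ell \}\bigr|.
\end{equation*}
Substituting $Lex(i^{n})=1+i\frac{m^{n}-1}{m-1}$, the defining inequality $Lex(i^{n})\leq \ell$ is equivalent to
\begin{equation*}
i\leq \frac{(\ell -1)(m-1)}{m^{n}-1}.
\end{equation*}
For $1\leq \ell \leq m^{n}$ the right-hand side lies in the interval $[0,m-1]$, so the integers $i\in\{0,1,\dots,m-1\}$ satisfying it are exactly $0,1,\dots ,\lfloor (\ell -1)(m-1)/(m^{n}-1)\rfloor$, giving the claimed count $1+\lfloor (\ell -1)(m-1)/(m^{n}-1)\rfloor$. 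I would then check the boundary $\ell =0$: the fraction equals $-(m-1)/(m^{n}-1)$, which is $-1$ exactly when $n=1$ and lies strictly in $(-1,0)$ when $n\geq 2$; in both cases the floor is $-1$, so the formula returns $0$, consistent with the fact that no corner vertex has $Lex$-value $\leq 0$.

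There is no real obstacle here; the only care required is to verify that the formula produces the correct value at the endpoints $\ell =0$ and $\ell =m^{n}$ (the latter giving $1+(m-1)=m$), and to note why the $n=0$ case has to be stated as a convention rather than derived from the formula, since $m^{n}-1=0$ makes the expression undefined.
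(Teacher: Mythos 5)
Your proposal is correct and follows essentially the same route as the paper: both translate the condition $Lex(i^{n})\leq \ell$ via the arithmetic-progression formula $Lex(i^{n})=1+i\frac{m^{n}-1}{m-1}$ into the inequality $i\leq \frac{(\ell-1)(m-1)}{m^{n}-1}$ and read off the count with a floor. Your explicit check of the endpoint $\ell=0$ (and the remark on why $n=0$ must be a separate convention) is a small point of added care that the paper's proof glosses over.
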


\begin{proof}
Note that the interval of integers from $1$ to $m^{n}-1$ may be divided into 
$m-1$ intervals of size $1+m+m^{2}+...+m^{n-1}$. The $k^{th}$ such interval
consists of the numbers $Lex\left( \left( k-1\right) ^{n}\right) =1+\left(
k-1\right) \frac{m^{n}-1}{m-1},$ $2+\left( k-1\right) \frac{m^{n}-1}{m-1}%
,...,\frac{m^{n}-1}{m-1}+$ $\left( k-1\right) \frac{m^{n}-1}{m-1}=k\frac{%
m^{n}-1}{m-1}=Lex\left( k^{n}\right) -1$. Thus $q_{n,m}(\ell )=k+1$ iff $%
Lex\left( k^{n}\right) =1+k\frac{m^{n}-1}{m-1}\leq \ell <1+\left( k+1\right) 
\frac{m^{n}-1}{m-1}=Lex\left( \left( k+1\right) ^{n}\right) $, \textit{i.e. }%
$k\leq \frac{\left( \ell -1\right) \left( m-1\right) }{m^{n}-1}<k+1$.
Therefore $k=\left\lfloor \frac{\left( \ell -1\right) \left( m-1\right) }{%
m^{n}-1}\right\rfloor $ so $q_{n,m}(\ell )=k+1=\ 1+\left\lfloor \frac{\left(
\ell -1\right) \left( m-1\right) }{m^{n}-1}\right\rfloor $.
\end{proof}

\begin{lemma}
For $n\geq 0$, $m\geq 2$ and $0\leq \ell \leq m^{n}$, the values of $%
\left\vert \Theta (Lex^{-1}\left( n,m;\ell \right) )\right\vert $ are
generated from the initial condition, 
\begin{equation*}
\left\vert \Theta (Lex^{-1}\left( 0,m;\ell \right) )\right\vert =\left\{ 
\begin{tabular}{ll}
$0$ & if $\ell =0$ \\ 
$0$ & if $\ell =1$%
\end{tabular}%
\right. \text{,}
\end{equation*}%
by the recurrence, for $n>0,$%
\begin{eqnarray*}
&&\left\vert \Theta (Lex^{-1}\left( n,m;\ell \right) )\right\vert \\
&=&k_{n,m}(\ell )\left( m-k_{n,m}(\ell )\right) +\left\vert \Theta
(Lex^{-1}\left( n-1,m;\ell ^{\prime }\right) )\right\vert \\
&&+\left\{ 
\begin{tabular}{ll}
$-q_{n-1,m}\left( \ell ^{\prime }\right) $ & if $q_{n-1,m}\left( \ell
^{\prime }\right) \leq k_{n,m}(\ell )$ \\ 
$q_{n-1,m}\left( \ell ^{\prime }\right) -2k_{n,m}(\ell )$ & if $%
q_{n-1,m}\left( \ell ^{\prime }\right) >k_{n,m}(\ell )$%
\end{tabular}%
\right. \text{,}
\end{eqnarray*}%
where $\ell ^{\prime }=\ell -k_{n,m}(\ell )m^{n-1}$, so $0<\ell ^{\prime
}\leq m^{n-1}$.

\begin{proof}
The graph $S(0,m)$ has one vertex and no edges so 
\begin{equation*}
\left\vert \Theta (Lex^{-1}\left( 0,m;\ell \right) )\right\vert =\left\{ 
\begin{tabular}{ll}
$0$ & if $\ell =0$ \\ 
$0$ & if $\ell =1$%
\end{tabular}%
\right. \text{.}
\end{equation*}

The edges, $\left\{ v,w\right\} $ of $S(n,m)$ cut by $Lex^{-1}\left( \left[
1,\ell \right] \right) $ have

\begin{enumerate}
\item $Lex\left( v\right) \leq k_{n,m}(\ell )m^{n-1}$ and $Lex\left(
w\right) >k_{n,m}(\ell )m^{n-1}$, or

\item $k_{n,m}(\ell )m^{n-1}<Lex\left( v\right) <Lex\left( w\right) \leq
\left( k_{n,m}(\ell )+1\right) m^{n-1}$, or

\item $k_{n,m}(\ell )m^{n}<Lex\left( v\right) \leq \left( k_{n,m}(\ell
)+1\right) m^{n-1}$ and

\begin{enumerate}
\item $0\,<Lex\left( w\right) \leq k_{n,m}(\ell )m^{n-1}$ or

\item $\left( k_{n,m}(\ell )+1\right) m^{n-1}<Lex\left( w\right) \leq m^{n}$.
\end{enumerate}
\end{enumerate}

The three cases correspond to the three terms of the identity. Note that the
edges which should not be counted in term 1, are cancelled out in term 3a.
\end{proof}
\end{lemma}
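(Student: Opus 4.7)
The plan is induction on $n$. The base case $n = 0$ is immediate: $S(0, m)$ is a single isolated vertex, so $|\Theta(S)| = 0$ regardless of $S$, matching the stated initial condition.

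For the inductive step, I would exploit the self-similar decomposition of $S(n, m)$ into $m$ vertex-disjoint copies of $S(n-1, m)$, indexed by the first coordinate $i \in \{0, \ldots, m-1\}$, joined by $\binom{m}{2}$ inter-copy ``bridge'' edges of the form $\{ij^{n-1}, ji^{n-1}\}$ for $i \neq j$. Writing $k = k_{n,m}(\ell)$ and $\ell' = \ell - km^{n-1}$, the initial lex-segment $Lex^{-1}(n, m; \ell)$ consists of copies $0, 1, \ldots, k-1$ in their entirety together with the first $\ell'$ lex vertices of copy $k$, while copies $k+1, \ldots, m-1$ are empty. I would then partition $\Theta(Lex^{-1}(n, m; \ell))$ into three pieces corresponding to the three terms of the recurrence.

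The first piece counts the $k(m - k)$ bridges that go from one of the $k$ complete copies to one of the other $m - k$ copies, counted provisionally (ignoring whether the copy-$k$ endpoint actually lies in $S$). The second piece is the intra-copy cut inside copy $k$, which by the inductive hypothesis equals $|\Theta(Lex^{-1}(n-1, m; \ell'))|$; intra-copy edges within complete copies or within empty copies contribute nothing. The third piece corrects the bridge count based on the status of the copy-$k$ endpoints: a bridge $\{ki^{n-1}, ik^{n-1}\}$ (with $i \neq k$) has its copy-$k$ endpoint in $S$ iff the corner $i^{n-1}$ of $S(n-1, m)$ lies among the first $\ell'$ lex positions, equivalently iff $i < q := q_{n-1, m}(\ell')$, while its copy-$i$ endpoint is in $S$ iff $i < k$.

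The hard part is the careful case analysis for the third piece, which splits on whether $q \leq k$ or $q > k$ and must track the special role of the corner $k^n = k \cdot k^{n-1}$ of $S(n, m)$. When $q > k$, this vertex lies in $S$ and is counted by $q$, yet it is a corner of $S(n, m)$ with no bridge and so contributes nothing to the bridge correction. Tallying in each branch how many bridges incident to copy $k$ are overcounted by the first piece (non-cut bridges to complete copies, with $i < \min(k, q)$) and how many are missing (cut bridges to copies beyond $k$, with $k < i < q$), and combining with $k(m-k)$, yields the two conditional adjustments in the stated recurrence; the remark that term $3a$ cancels edges erroneously included in term $1$ captures exactly this reconciliation.
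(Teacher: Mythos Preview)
Your proposal is correct and follows essentially the same decomposition as the paper: bridges from the $k$ full copies (term~1), the internal cut within copy~$k$ (term~2), and a signed correction for bridges at copy~$k$ (term~3, with 3a cancelling the overcount in term~1). One small slip: the identification of the intra-copy-$k$ cut with $|\Theta(Lex^{-1}(n-1,m;\ell'))|$ is a direct structural fact (copy~$k$ is literally an isomorphic copy of $S(n-1,m)$), not an appeal to an inductive hypothesis---the lemma asserts a recurrence, and verifying the recurrence at level $n$ requires no assumption about level $n-1$.
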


\begin{definition}
For $0\leq \ell _{b}\leq \ell _{a}\leq m^{n}$, let%
\begin{equation*}
\sigma _{n,m}\left( \ell _{a},\ell _{b}\right) =\left\{ 
\begin{tabular}{ll}
$q_{n,m}(\ell _{b})+\left( q_{n,m}(\ell _{a}+\ell _{b})-q_{n,m}(\ell
_{a})\right) $ & if $\ell _{a}+\ell _{b}<m^{n}$ \\ 
$q_{n,m}(\ell _{b})-q_{n,m}(\ell _{a}+\ell _{b}-m^{n})+\left( m-q_{n,m}(\ell
_{a})\right) $ & if $\ell _{a}+\ell _{b}\geq m^{n}$%
\end{tabular}%
\right. .
\end{equation*}
\end{definition}

\begin{remark}
\bigskip If $\ell _{a}+\ell _{b}=m^{n}$, both formulas for $\sigma
_{n,m}\left( \ell _{a},\ell _{b}\right) $ give the same value, $q_{n,m}(\ell
_{b})+m-q_{n,m}(\ell _{a})$.
\end{remark}

In \cite{Har16}, to prove Conjecture 1 for $m=3$, we extended all three of
the Steiner operations, stabilization, compression and subadditivation, that
had sufficed to prove that initial segments of Lex order on $K_{2^{n}}$, the
graph of the $n$-cube, are solutions of the EIP. The extensions for
stabilization and compression given in \cite{Har16} hold for all $m$, but
that for subadditivation only holds for $S(n,3)$. The results necessary for
this extension were presented in the following lemmas:

\begin{lemma}
(Theorem 7 of \cite{Har16}) If $3^{n}>\ell _{a}\geq \ell _{b}>0$, then 
\begin{equation*}
\left\vert \Theta \left( Lex^{-1}\left( n,3;\ell _{a}+\ell _{b}\right)
\right) \right\vert +1\leq \left\vert \Theta \left( Lex^{-1}\left( n,3;\ell
_{a}\right) \right) \right\vert +\left\vert \Theta \left( Lex^{-1}\left(
n,3;\ell _{b}\right) \right) \right\vert .
\end{equation*}
\end{lemma}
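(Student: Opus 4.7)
The plan is to induct on $n$. For the base case $n=1$, $S(1,3)=K_{3}$ has $|\Theta|(K_{3};\ell)=\ell(3-\ell)$, and the only admissible pair is $\ell_{a}=\ell_{b}=1$, for which the inequality reduces to $2+1\le 2+2$.

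For the inductive step I expand each of the three quantities $|\Theta(Lex^{-1}(n,3;\ell))|$, for $\ell\in\{\ell_{a},\ell_{b},\ell_{a}+\ell_{b}\}$, via the recurrence of the preceding lemma, writing $\ell=k\cdot 3^{n-1}+\ell'$ with $0<\ell'\le 3^{n-1}$. Denote the resulting data $(k_{a},\ell_{a}')$, $(k_{b},\ell_{b}')$, $(k_{c},\ell_{c}')$ and split on carry: the no-carry case $\ell_{a}'+\ell_{b}'\le 3^{n-1}$ gives $k_{c}=k_{a}+k_{b}$ and $\ell_{c}'=\ell_{a}'+\ell_{b}'$, while the carry case $\ell_{a}'+\ell_{b}'>3^{n-1}$ gives $k_{c}=k_{a}+k_{b}+1$ and $\ell_{c}'=\ell_{a}'+\ell_{b}'-3^{n-1}$. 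The target inequality then splits additively into three pieces: the bulk term
\[
B:=|\Theta(Lex^{-1}(n-1,3;\ell_{a}'))|+|\Theta(Lex^{-1}(n-1,3;\ell_{b}'))|-|\Theta(Lex^{-1}(n-1,3;\ell_{c}'))|,
\]
the crossing term $\Delta_{k}:=k_{a}(3-k_{a})+k_{b}(3-k_{b})-k_{c}(3-k_{c})$, and the corner-correction residue $\varepsilon_{a}+\varepsilon_{b}-\varepsilon_{c}$, where each $\varepsilon\in\{-q_{n-1,3}(\ell'),\,q_{n-1,3}(\ell')-2k\}$ according to which branch of the recurrence is active.

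In the no-carry case the inductive hypothesis applied to the pair $(\ell_{a}',\ell_{b}')$ yields $B\ge 1$ directly; in the carry case one applies it to a suitably reordered pair inside $S(n-1,3)$, using the boundary identity $|\Theta(Lex^{-1}(n-1,3;3^{n-1}))|=0$ when $\ell_{a}'$ or $\ell_{b}'$ equals $3^{n-1}$. What remains is a purely numerical inequality in $(k_{a},k_{b},k_{c})\in\{0,1,2\}^{3}$ and the $q_{n-1,3}$-values in $\{1,2,3\}$, to be checked by enumeration.

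The main obstacle is the scale of the bookkeeping: the recurrence has two sub-formulas for each $\varepsilon$ depending on whether $q\le k$ or $q>k$, the carry/no-carry dichotomy doubles that, and $(k_{a},k_{b})$ ranges over several ordered pairs, producing on the order of two dozen subcases. The extremal one is $k_{a}=k_{b}=1$ without carry, so $k_{c}=2$ and $\Delta_{k}=4-2=2$; the corner-correction swing can consume nearly all of $\Delta_{k}$, and it is precisely the $+1$ slack on $B$ supplied by the inductive hypothesis that closes the inequality. The formulation of the claim with the additional $+1$ is tuned to make this induction self-perpetuating.
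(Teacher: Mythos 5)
First, a point of context: the paper does not reprove this lemma at all. It is quoted from \cite{Har16}, and within the present paper it is simply the $m=3$ instance of Conjecture 3, since $\sigma_{n,3}(\ell_a,\ell_b)=q_{n,3}(\ell_b)+\bigl(q_{n,3}(\ell_a+\ell_b)-q_{n,3}(\ell_a)\bigr)\geq 1$ whenever $\ell_b\geq 1$. Your outline does mirror the general shape of the paper's proof of Conjecture 3 (expand all three boundary values by the recurrence, split on the carry, and control the pieces $\mathbf{I}+\mathbf{II}+\mathbf{III}$), but there is a genuine gap in the step where you declare the remainder to be ``a purely numerical inequality \ldots to be checked by enumeration'' over $(k_a,k_b,k_c)$ and $q$-values ranging freely. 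If you treat those parameters as independent, the enumeration fails. For example, take $k_a=k_b=0$ with a carry, so $k_c=1$ and $\Delta_k=-2$: the dual form of the inductive hypothesis gives $B\geq 1$, and the crude bounds on the corner corrections give $\varepsilon_a+\varepsilon_b-\varepsilon_c\geq q_a'+q_b'-1\geq 1$, for a total of $0$, not the required $1$. The same deficit of exactly one occurs for $k_a=1,k_b=0$ without carry. These apparent counterexamples are only excluded because the parameters are correlated: e.g.\ $q_a'=q_b'=1$ forces $\ell_a'+\ell_b'<3^{n-1}$ (no carry), and $q_{n-1,3}(\ell_a'+\ell_b')\leq q_a'+q_b'$ with equality failing only under precise divisibility conditions. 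Establishing these correlations is exactly the content of the paper's Lemmas 6--8, and without them (or something equivalent) your enumeration does not close.

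Relatedly, your remark that the ``$+1$'' formulation is ``tuned to make this induction self-perpetuating'' is contradicted by the paper's own account: the $+1$ inductive hypothesis alone is \emph{not} strong enough, which is why \cite{Har16} also needed the $+2$ strengthening (Lemma 5 here) in the regime $\ell_a,\ell_b<3^n/2<\ell_a+\ell_b$, and why the present paper replaces both by the $\sigma$-strengthened hypothesis of Conjecture 3. In the carry case above, what actually saves the inequality is precisely that the inductive hypothesis should have delivered $B\geq 2$ there, not $B\geq 1$. So either strengthen your inductive hypothesis to the $\sigma$-form (at which point you are reproving Conjecture 3 for $m=3$), or import the arithmetic lemmas on $k_{n,m}$ and $q_{n,m}$; as written, the induction does not go through. (A minor slip: for $n=1$ the pair $(\ell_a,\ell_b)=(2,1)$ is also admissible, though the base case still holds.)
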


\begin{lemma}
(Lemma 10 of \cite{Har16})If $3^{n}/2>\ell _{a}\geq \ell _{b}>0$ $\&$ $\ell
_{a}+\ell _{b}>3^{n}/2$, then 
\begin{equation*}
\left\vert \Theta \left( Lex^{-1}\left( n,3;\ell _{a}+\ell _{b}\right)
\right) \right\vert +2\leq \left\vert \Theta \left( Lex^{-1}\left( n,3;\ell
_{a}\right) \right) \right\vert +\left\vert \Theta \left( Lex^{-1}\left(
n,3;\ell _{b}\right) \right) \right\vert .
\end{equation*}
\end{lemma}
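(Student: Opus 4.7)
The plan is to induct on $n$, reducing the claim at level $n$ via the recurrence of Lemma 3 to inequalities at level $n-1$ for which Lemma 4 (the $+1$ subadditivity) and the inductive hypothesis (the $+2$ subadditivity at level $n-1$) are available. For the base case $n=1$, the hypotheses $3/2>\ell_a\ge\ell_b>0$ with $\ell_a+\ell_b>3/2$ force $\ell_a=\ell_b=1$ and $\ell_a+\ell_b=2$; since $S(1,3)=K_3$ has $|\Theta(Lex^{-1}(1,3;\ell))|=0,2,2,0$ for $\ell=0,1,2,3$, both sides of the inequality evaluate to $4$.

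For the inductive step, write $\ell_c:=\ell_a+\ell_b$ and $f_n(\ell):=|\Theta(Lex^{-1}(n,3;\ell))|$, and for each $\bullet\in\{a,b,c\}$ set $k_\bullet:=k_{n,3}(\ell_\bullet)$ and $\ell_\bullet':=\ell_\bullet-k_\bullet\,3^{n-1}$. By Lemma 1, the hypothesis $\ell_a,\ell_b<3^n/2$ forces $k_a,k_b\in\{0,1\}$ and $\ell_c>3^n/2$ forces $k_c\in\{1,2\}$. Expanding each of $f_n(\ell_a),f_n(\ell_b),f_n(\ell_c)$ by the recurrence of Lemma 3 recasts the target inequality as
\[
f_{n-1}(\ell_a')+f_{n-1}(\ell_b')-f_{n-1}(\ell_c')+Q+C\ge 2,
\]
where $Q:=k_a(3-k_a)+k_b(3-k_b)-k_c(3-k_c)$ is the hub-edge contribution and $C$ collects the corresponding combination of $q_{n-1,3}$-corrections prescribed by the recurrence.

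I then case-split on $(k_a,k_b,k_c)$. In the case $(1,1,2)$, one has $\ell_c'=\ell_a'+\ell_b'$ and $Q=2$, and Lemma 4 applied to $(\ell_a',\ell_b')$ already provides enough slack. In the mixed cases $(1,0,1)$ and $(1,0,2)$, I relate $\ell_c'$ to $\ell_a'$ and $\ell_b$ as a sum --- directly when $k_c=1$, and via the complementation identity $f_{n-1}(\ell)=f_{n-1}(3^{n-1}-\ell)$ when $k_c=2$ --- and then apply Lemma 4 or, when the relevant residue pair satisfies the crossing hypothesis at level $n-1$, the inductive hypothesis. In the pure case $(0,0,1)$ the substitution $a':=3^{n-1}-\ell_a$, $b':=3^{n-1}-\ell_b$ combined with complementation converts the $f_{n-1}$ term into $f_{n-1}(a'+b')$, so Lemma 4 applied to $(a',b')$ yields a slack of $1$; the additional unit needed is supplied by $C$, because the hypothesis $\ell_c>3^n/2$ combined with $\ell_a,\ell_b<3^{n-1}$ forces $\ell_a,\ell_b\ge(3^{n-1}+1)/2$ and hence $q_{n-1,3}(\ell_a)=q_{n-1,3}(\ell_b)=2$.

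The main obstacle is the case-by-case bookkeeping of $C$, which is piecewise-linear in its $q_{n-1,3}$-arguments and jumps at the two corner-vertex positions $(3^{n-1}+1)/2$ and $3^{n-1}$ inside $[1,3^{n-1}]$. One must verify in every sub-case --- including the degenerate ones where a residue equals $0$ or $3^{n-1}$ and Lemma 4 does not apply directly, handled by direct substitution using $f_{n-1}(0)=0$ and $q_{n-1,3}(0)=0$ --- that $Q+C$ together with the slack of $1$ from Lemma 4 or $2$ from the inductive hypothesis reaches the required total of $2$. The tightness observed in the $(0,0,1)$ case, where every ingredient is used to the last unit, is what makes this verification delicate.
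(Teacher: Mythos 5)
The paper itself contains no proof of this lemma: it is quoted verbatim from \cite{Har16} with the explicit disclaimer that proofs from that paper are not repeated, and within the present paper it is subsumed by Conjecture 3 (proved in Section 3), since under the stated hypotheses $q_{n,3}(\ell_a)=q_{n,3}(\ell_b)=1$ while $q_{n,3}(\ell_a+\ell_b)\geq 2$, so $\sigma_{n,3}(\ell_a,\ell_b)\geq 2$ and the first inequality of Conjecture 3 gives exactly the claim. Your direct induction is therefore an independent route, though structurally it is the same kind of argument the paper deploys for Conjecture 3: induction on $n$ through the recurrence of Lemma 3, a case split on the $k$-values and on the branches of the $q$-correction term, and duality/complementation to fold the $k_c=2$ cases back into subadditivity of residues. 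I checked your four cases $(k_a,k_b,k_c)\in\{(0,0,1),(1,0,1),(1,0,2),(1,1,2)\}$ and the accounting closes in each: in $(1,0,1)$ with $q_{n-1,3}(\ell_b)=1$ one indeed cannot get by with Lemma 4 alone, but then $\ell_a',\ell_b'<3^{n-1}/2$ and $\ell_a'+\ell_b'>3^{n-1}/2$, so the crossing hypothesis at level $n-1$ is forced and the inductive hypothesis supplies the needed $2$, exactly as you say; in $(0,0,1)$ your observation that $\ell_a,\ell_b\geq(3^{n-1}+1)/2$ gives $Q+C=-2+2+2-0=2$ outright, with Lemma 4 (applicable because $k_a=k_b=0$ keeps both complements positive) contributing a spare unit. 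The trade-off between the two approaches: yours is self-contained for $m=3$ and has only a handful of cases, but must take Lemma 4 (Theorem 7 of \cite{Har16}) as a prior input; the paper's route gets the lemma for free from the uniform $\sigma$-bookkeeping of Conjecture 3, at the cost of the sixteen-case analysis needed for general $m$. The only thing standing between your outline and a complete proof is writing out the subcase bookkeeping you have correctly identified as the remaining work.
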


In each case the inequality for subadditivity had to be strengthened by an
additive term. The following statement generalizes those results for
arbitrary $m$:

\begin{conjecture}
$\forall n,m,\ell _{a},\ell _{b}\in \mathbb{N}$ such that $m^{n}\geq \ell
_{a}\geq \ell _{b}>0$, if $\ell _{a}+\ell _{b}\leq m^{n}$ then 
\begin{eqnarray*}
&&\left\vert \Theta (Lex^{-1}\left( n,m;\ell _{a}+\ell _{b}\right)
)\right\vert +\sigma _{n,m}\left( \ell _{a},\ell _{b}\right) \\
&\leq &\left\vert \Theta (Lex^{-1}\left( n,m;\ell _{a}\right) )\right\vert
+\left\vert \Theta (Lex^{-1}\left( n,m;\ell _{b}\right) )\right\vert ,
\end{eqnarray*}
\end{conjecture}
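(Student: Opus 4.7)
The plan is to prove the conjecture by induction on $n$, using the recurrence for $\left\vert \Theta (Lex^{-1}(n,m;\ell))\right\vert$ established in the previous lemma to reduce an instance at level $n$ to instances at level $n-1$. Writing $f_n(\ell) := \left\vert \Theta (Lex^{-1}(n,m;\ell))\right\vert$ for brevity, the base case $n=1$ reduces to $S(1,m)=K_m$, where $f_1(\ell)=\ell(m-\ell)$ and $q_{1,m}(\ell)=\ell$. Under the constraint $\ell_a+\ell_b\leq m$, one computes $\sigma_{1,m}(\ell_a,\ell_b)=2\ell_b$, and the inequality becomes $2\ell_a\ell_b\geq 2\ell_b$, which is immediate from $\ell_a\geq 1$.

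For the inductive step, set $k_a=k_{n,m}(\ell_a)$, $k_b=k_{n,m}(\ell_b)$, $k_c=k_{n,m}(\ell_a+\ell_b)$, and write the residues $\ell_a'=\ell_a-k_am^{n-1}$, $\ell_b'=\ell_b-k_bm^{n-1}$, $\ell_c'=(\ell_a+\ell_b)-k_cm^{n-1}$. Two principal cases appear: the \emph{no-carry} case $\ell_a'+\ell_b'\leq m^{n-1}$, in which $k_c=k_a+k_b$ and $\ell_c'=\ell_a'+\ell_b'$, and the \emph{carry} case $\ell_a'+\ell_b'>m^{n-1}$, in which $k_c=k_a+k_b+1$ and $\ell_c'=\ell_a'+\ell_b'-m^{n-1}$. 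In the no-carry case, substituting the recurrence on all three sides, the Hamming contributions collapse via the identity $k_c(m-k_c)+2k_ak_b = k_a(m-k_a)+k_b(m-k_b)$, producing a slack of $2k_ak_b$ on the right-hand side; the inductive hypothesis applied to $(\ell_a',\ell_b')$ in $S(n-1,m)$ supplies an additional slack $\sigma_{n-1,m}(\ell_a',\ell_b')$. What remains is to verify that these slacks, combined appropriately, dominate $\sigma_{n,m}(\ell_a,\ell_b)$ plus the defect from the three branching correction terms that appear in the recurrence. Using the relation $q_{n,m}(\ell)=k_{n,m}(\ell)+\mathbf{1}[q_{n-1,m}(\ell')>k_{n,m}(\ell)]$, which follows from the structure of corner vertices across sub-subgraphs, one converts the $\sigma_{n,m}$ difference into a linear combination of these indicators and reduces the problem to a finite subcase analysis on the three comparisons $q_{n-1,m}(\ell')$ versus $k$ at the arguments $a$, $b$, and $c$.

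The carry case is the harder one, because the inductive hypothesis cannot be applied directly to the pair $(\ell_a',\ell_b')$ whose sum exceeds $m^{n-1}$. The natural remedy is to split $\ell_b=\beta+\gamma$ with $\beta=m^{n-1}-\ell_a'$, so that $\ell_a+\beta=(k_a+1)m^{n-1}$ lands exactly at a block boundary. Both pair-sums $(\ell_a,\beta)$ and $(\ell_a+\beta,\gamma)$ are then no-carry instances to which the just-proved no-carry step applies, giving in effect a secondary induction on $\ell_b$ at fixed $n$. The symmetry identity $q_{n,m}(m^n-\ell)=m-q_{n,m}(\ell)$ from the earlier remark is then used to reassemble the two resulting $\sigma$-contributions into $\sigma_{n,m}(\ell_a,\ell_b)$.

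The main obstacle I expect is the combinatorial bookkeeping across three independent branchings: the one inside the recurrence for $f_n$ (on whether $q_{n-1,m}(\ell')\leq k$ or $>k$), the two-branch definition of $\sigma_{n,m}$ itself (on whether $\ell_a+\ell_b<m^n$ or $\geq m^n$), and the carry/no-carry split. Combined with the fact that $q_{n,m}$ and $k_{n,m}$ can differ by at most $1$, this produces a sizable matrix of subcases in which the slack $2k_ak_b+\sigma_{n-1,m}(\ell_a',\ell_b')$ must be shown to uniformly absorb $\sigma_{n,m}(\ell_a,\ell_b)$ plus the defect from the correction terms. Establishing this uniformly across all subcases, without falling back on the computer verification used in \cite{H-H-L} for small $n+m$, is the essential content of the theorem.
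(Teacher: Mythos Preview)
Your overall plan---induction on $n$, the $K_m$ base case, expansion via the recurrence, and the slack $2k_ak_b$ from the quadratic terms plus $\sigma_{n-1,m}(\ell_a',\ell_b')$ from the inductive hypothesis---matches the paper's proof, and your anticipation of a sizable subcase matrix is accurate: the paper carries out exactly sixteen cases, indexed by the four binary conditions you name.

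Where you diverge is the carry case. You assert that the inductive hypothesis cannot be applied to $(\ell_a',\ell_b')$ when $\ell_a'+\ell_b'>m^{n-1}$, and propose a block-boundary splitting $\ell_b=\beta+\gamma$ with $\beta=m^{n-1}-\ell_a'$ to reduce to two no-carry applications at level $n$. The paper does something simpler: it observes at the outset that the second clause of the conjecture (for $\ell_a+\ell_b\geq m^n$) is \emph{equivalent} to the first via the duality $|\Theta(Lex^{-1}(n,m;m^n-\ell))|=|\Theta(Lex^{-1}(n,m;\ell))|$ together with $q_{n,m}(m^n-\ell)=m-q_{n,m}(\ell)$. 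Once both clauses are packaged into the single statement $\Sigma_{n,m}\geq 0$ and carried together in the induction, the carry case at level $n+1$ is handled directly: the recursive term is bounded below by $\sigma_{n,m}(\ell_a',\ell_b')$ in \emph{either} branch of its definition, and the sixteen cases proceed uniformly with no need for a secondary induction.

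Your splitting route is not obviously doomed, but it is not obviously sound either. Chaining the two no-carry inequalities for $(\ell_a,\beta)$ and $(\ell_a+\beta,\gamma)$ yields
\[
f_n(\ell_a+\ell_b)+\sigma_{n,m}(\ell_a,\beta)+\sigma_{n,m}(\ell_a+\beta,\gamma)\leq f_n(\ell_a)+f_n(\beta)+f_n(\gamma),
\]
and you must still relate $f_n(\beta)+f_n(\gamma)$ back to $f_n(\ell_b)$ and show that the composite $\sigma$'s dominate $\sigma_{n,m}(\ell_a,\ell_b)$; the former step is itself an instance of the conjecture for the pair $(\beta,\gamma)$, which may again be a carry instance, and the ordering hypothesis $\beta\leq\ell_a$ can fail when $k_a=0$. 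The duality observation avoids all of this bookkeeping, and I would recommend adopting it.
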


and if $\ell _{a}+\ell _{b}\geq m^{n}$ then 
\begin{eqnarray*}
&&\left\vert \Theta (Lex^{-1})\left( n,m;\ell _{a}+\ell _{b}-m^{n}\right)
\right\vert +\sigma _{n,m}\left( \ell _{a},\ell _{b}\right) \\
&\leq &\left\vert \Theta (Lex^{-1})\left( n,m;\ell _{a}\right) \right\vert
+\left\vert \Theta (Lex^{-1})\left( n,m;\ell _{b}\right) \right\vert .
\end{eqnarray*}%
If this condition holds we say that $\left\vert \Theta (Lex^{-1}\left(
n,m;\ell \right) )\right\vert $ is \textit{subadditive+}$\sigma $.

The initial justification for Conjecture 3 was that it is simple and
suffices to prove Conjecture 2 (as we show in the next section). After
expending some effort to prove it, we began to question its validity.
However, verifying it for all $m,n$ such that $m+n\leq 16$ by computer
calculation \cite{H-H-L} convinced us that we were on the right track.
Because of its complexity, we defer the proof of Conjecture 3 until after
the next section which justifies the effort.

\section{Another Two-StOp Proof}

\begin{theorem}
Conjecture 3 $\Rightarrow $ Conjecture 2 .

\begin{proof}
This proof follows essentially the same logic as the 3-StOp proof of the
main theorem in \cite{Har16} for the special case $m=3$. However, it has
been simplified by dropping stabilization, which turned out to be
unnecessary. The proof proceeds by induction on $n$ ($\ell ,m,s,t$ being
fixed).

\begin{description}
\item[Initial Step] It is true for $n=1$ since $S_{s,t}(1,m)=K_{m}$ with the
"extra" vertices, $V_{I},V_{K}$, attached and any $\ell $-set of vertices
which takes the members of $I$ first and the members of $K$ last will
minimize $\left\vert \Theta _{s,t}(S)\right\vert $. Since $Lex_{IJK}=Lex$
does that, its initial segments are optimal. Thus Conjecture 2 is true and
the implication is trivial.

\item[Inductive Step] Assume the theorem is true for $n\geq 1$ and that 
\begin{equation*}
S\subseteq V_{S_{s,t}(n+1,m)}=\left\{ 0,1,...,m-1\right\} ^{n+1}\text{,}
\end{equation*}%
with $\left\vert S\right\vert =\ell $, and $S$ minimizes $\left\vert \Theta
_{s,t}(S)\right\vert $ over all such sets. We may also assume that $S$
maximizes $\boldsymbol{\ell }(S)=\left( \ell _{0},\ell _{1},...,\ell
_{m-1}\right) $ with respect to lexicographic order, where $\ell
_{h}=\left\vert S\cap \left( \left\{ h\right\} \times S_{s,t}(n,m)\right)
\right\vert $. The lexicographic maximum that $\boldsymbol{\ell }(S)$ can
take if $\left\vert S\right\vert =\ell $ is 
\begin{equation*}
\left( m^{n}\right) ^{k_{n,m}\left( \ell \right) }\ell ^{\prime
}0^{n-k_{n,m}\left( \ell \right) }=\boldsymbol{\ell }(Lex^{-1}\left(
1,2,...,\ell \right) ).
\end{equation*}
If $S=Lex^{-1}\left( 1,2,...,\ell \right) $ we are done, so assume that $%
S\neq Lex^{-1}\left( 1,2,...,\ell \right) $. We shall use the following two
Steiner operations to reduce any such $S$ to $Lex^{-1}\left( \left\{
1,2,...,\ell \right\} \right) $, $Lex$ being the standard lexicographic
order on $V_{S(n,m)}=\left\{ 0,1,...,m-1\right\} ^{n+1}$):

\begin{enumerate}
\item Apply compression, utilizing the inductive hypothesis. Then we need
only consider $S$ that are compressed,

\item Apply subadditivation (a StOp based on the subadditivity+$\sigma $ of
\linebreak $\left\vert \Theta \left( S(n,m);\ell \right) \right\vert $
(Conjecture 3)) reducing $S$ to $Lex^{-1}(\left\{ 1,2,...,\ell \right\} )$.
\end{enumerate}
\end{description}

\underline{Compression}: From Section 4.4 of \cite{Har16}: Conjecture 1 is
the special case of Conjecture 2 with $s=0,t=m$. The point is that the
optimal order on $S_{s,t}\left( n,m\right) $ is independent of $s,t$ even
though its exterior edges vary with $s,t$. This is what makes compression
work on $S_{s,t}\left( n+1,m\right) $. Since any permutation of $\left\{
0,1,...,m-1\right\} $ induces a symmetry of $S(n+1,m)$, from the point of
view of $\left\{ h\right\} \times S_{s,t}(n,m),$ its exterior edges whose
other ends are in $S$ may be regarded as coming from the previous ranks
(renumbered $0,1,...,s^{\prime }-1$ but maintaining their relative order)
and the exterior edges whose other ends are not in $S$ may be regarded as
going to the succeeding ranks (similarly renumbered $s^{\prime }+t^{\prime
},s^{\prime }+t^{\prime }+1,...,m-1$). The remaining vertices are also
renumbered $s^{\prime },s^{\prime }+1,...,s^{\prime }+t^{\prime }-1$. It is
wrt this renumbering that we define $Lex_{h}$.

\begin{definition}
\bigskip $\forall S\subseteq V_{S_{s,t}(n+1,m)}$, $Comp_{Lex_{h}}\left(
S\right) =S-\left( S\cap V_{\left\{ h\right\} \times S_{s.t}(n,m)}\right)
+Lex_{h}^{-1}\left\{ 1,2,...,\ell _{h}\right\} $.
\end{definition}
\end{proof}
\end{theorem}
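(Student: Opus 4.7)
The plan is to continue the induction by completing compression and then invoking subadditivation. For compression, I would argue that for each $h$, the fiber $\{h\}\times S(n,m)$ carries an induced decoration $S_{s_h,t_h}(n,m)$ obtained by gathering its $m-1$ cross-fiber edges (to corners of other fibers) with any inherited exterior edges, relabeled so that edges whose far endpoints are in $S$ come first and those whose far endpoints lie outside $S$ come last. The inductive hypothesis applied to this decoration says $Lex_h^{-1}(\{1,\ldots,\ell_h\})$ is optimal for the partial objective restricted to fiber $h$, so $|\Theta_{s,t}(Comp_{Lex_h}(S))|\le|\Theta_{s,t}(S)|$. Iterating $h=0,\ldots,m-1$ (compressing one fiber does not alter the cross-fiber incidence pattern in $S$, since only internal contents change) yields a compressed $S^{\ast}$ with the same profile $\boldsymbol{\ell}(S^{\ast})=\boldsymbol{\ell}(S)$, each fiber a Lex-initial segment, and $|\Theta_{s,t}(S^{\ast})|\le|\Theta_{s,t}(S)|$.

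If the profile $\boldsymbol{\ell}(S^{\ast})$ already equals $(m^n)^{k_{n+1,m}(\ell)}\,\ell'\,0^{m-1-k_{n+1,m}(\ell)}$, then $S^{\ast}=Lex^{-1}(\{1,\ldots,\ell\})$ and the induction closes. Otherwise pick indices $a<b$ with $0<\ell_a<m^n$ and $\ell_b>0$. The subadditivation step replaces the pair $(\ell_a,\ell_b)$ by $(\min(\ell_a+\ell_b,m^n),\,\max(\ell_a+\ell_b-m^n,0))$, which strictly lex-increases $\boldsymbol{\ell}(S^{\ast})$; a finite number of such swaps, each followed by a re-compression, terminates at the lex-maximal profile.

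The crux is to show that no subadditivation step increases $|\Theta_{s,t}|$. Split the change into (i) interior edges of fibers $a,b$ and (ii) the cross-fiber and exterior edges incident to corners of $a$ or $b$. Part (i) changes by $|\Theta(Lex^{-1}(n,m;\min(\ell_a+\ell_b,m^n)))|+|\Theta(Lex^{-1}(n,m;\max(\ell_a+\ell_b-m^n,0)))|-|\Theta(Lex^{-1}(n,m;\ell_a))|-|\Theta(Lex^{-1}(n,m;\ell_b))|$, which by Conjecture 3 is at most $-\sigma_{n,m}(\ell_a,\ell_b)$. For part (ii), recall that in $S(n+1,m)$ there is exactly one edge between any two fibers, and it lands on a corner vertex of each; whether it contributes to the boundary depends on whether each of its corner-endpoints lies in $S$. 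In a fiber whose content is the Lex-initial segment of size $\ell_h$, the number of corners in $S$ is precisely $q_{n,m}(\ell_h)$, so the swap changes the corner counts from $q_{n,m}(\ell_a),q_{n,m}(\ell_b)$ to $q_{n,m}(\ell_a+\ell_b),0$ in the first case and to $m,q_{n,m}(\ell_a+\ell_b-m^n)$ in the second. The two clauses of $\sigma_{n,m}(\ell_a,\ell_b)$ were engineered precisely so that the net gain in boundary edges from part (ii) equals $+\sigma_{n,m}(\ell_a,\ell_b)$, cancelling the interior decrease and yielding $|\Theta_{s,t}|$-monotonicity.

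The hard part will be this edge count in part (ii). One must verify, for each other fiber $c$, whether the single edge between fibers $a$ and $c$ (respectively $b$ and $c$) flips in or out of the boundary as the corresponding corner of $a$ (resp.\ $b$) switches status under the swap, and similarly for the one edge between fibers $a$ and $b$ and for any exterior edges on corners of $a,b$. The bookkeeping is local but branches into sub-cases depending on whether each neighboring fiber is full, empty, or partial, on which of the $I,J,K$ decorations hits the corner in question, and on whether we are in the $\ell_a+\ell_b<m^n$ or $\ell_a+\ell_b\ge m^n$ regime. Once the sum is checked to match $\sigma_{n,m}$ in both regimes, the two StOps together reduce any optimal $S$ to $Lex^{-1}(\{1,\ldots,\ell\})$, completing the inductive step and thereby the implication.
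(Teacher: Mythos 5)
Your overall architecture matches the paper's: induction on $n$, fiber-by-fiber compression via the inductive hypothesis on the induced decorations $S_{s_h,t_h}(n,m)$, then a subadditivation step merging two fibers whose cost is controlled by splitting the boundary change into internal edges (bounded via Conjecture 3) and cross-fiber/exterior edges at corners (bounded by $\sigma_{n,m}$, whose two clauses are exactly the corner-count bookkeeping you describe). Your termination argument for subadditivation (iterate swaps that strictly lex-increase $\boldsymbol{\ell}(S)$) is a cosmetic variant of the paper's (choose $S$ lex-maximizing $\boldsymbol{\ell}(S)$ among optimal sets and derive a contradiction); both are fine.

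There is, however, one concretely false step in your compression argument. You assert that a single pass $h=0,\dots,m-1$ suffices because ``compressing one fiber does not alter the cross-fiber incidence pattern in $S$, since only internal contents change.'' This is wrong: the cross-fiber edges of $S(n+1,m)$ land on \emph{corner vertices} of each fiber, and $Comp_{Lex_h}$ replaces the contents of fiber $h$ by a Lex-initial segment, which in general changes \emph{which} corners of fiber $h$ lie in $S$ (and even how many, if the original contents were not optimal). That in turn changes the induced decorations $(s_c,t_c)$ of every other fiber $c$, so a fiber compressed earlier in the pass may fail to be compressed with respect to its new decoration. This is precisely why the paper applies $Comp_{Lex_{h\ (\mathrm{mod}\ m)}}$ \emph{cyclically} and invokes Theorem 4 of \cite{Har16} to show the cyclic composition eventually stabilizes at a fixed point $Comp_{\infty}(S)$, which is then compressed with respect to all fibers simultaneously. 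Your argument needs this fixed-point lemma (or some other termination argument for the iteration); as written the claim that $S^{\ast}$ is compressed after one sweep does not hold. The remaining unverified item you flag yourself --- that the exterior-edge gain in part (ii) is at most $\sigma_{n,m}(\ell_a,\ell_b)$ --- is treated at the same level of detail in the paper, so I do not count it against you, but the compression issue should be repaired by citing or reproving the stabilization of the cyclic compression.
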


\begin{theorem}
(Theorem 3 of \cite{Har16}) $\forall S\subseteq V_{S_{s,t}(n+1,m)}$,
\end{theorem}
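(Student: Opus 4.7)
The statement being asserted is that compression at coordinate $h$ cannot increase the boundary, i.e., $|\Theta_{s,t}(\mathrm{Comp}_{Lex_h}(S))| \le |\Theta_{s,t}(S)|$ (with $|\mathrm{Comp}_{Lex_h}(S)|=|S|$ being immediate from the definition). My plan is to localize the argument to the single copy $\{h\}\times S(n,m)$ and reduce it to the inductive hypothesis (Conjecture 2 for $n$). Partition $E_{S_{s,t}(n+1,m)}$ into (i) the ``interior'' edges of $\{h\}\times S(n,m)$, (ii) the $n$ edges of $S(n+1,m)$ linking the corner $h\cdots h\, i$ of $\{h\}\times S(n,m)$ to the corner $i\cdots i\, h$ of $\{i\}\times S(n,m)$ for $i\ne h$, (iii) the ``exterior'' edges of $S_{s,t}(n+1,m)$ attached to corner $h^{n+1}$ when $h\in I\cup K$, and (iv) all remaining edges, whose endpoints lie entirely outside $\{h\}\times S(n,m)$. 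Group (iv) is untouched by the operation, so it suffices to prove that the contribution of (i)--(iii) does not grow when $S\cap(\{h\}\times S(n,m))$ is replaced by its $Lex_h$-initial segment of the same size.

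The key observation is that the contributions from (ii) and (iii) can be absorbed into a fresh decoration on the lower-dimensional graph. For each corner vertex $h^n\cdot i$ of $\{h\}\times S(n,m)$, decide whether its unique external neighbour sits in $S$ or its complement: count those with endpoint in $S$ to define a new $s'$, and those with endpoint outside $S$ to define a new $m-s'-t'$; the remaining corners (including the one whose incident exterior edge, if any, falls in $(h\text{th copy})\cap\Theta$) play the role of $J$. After renumbering the $m$ corner labels so that the ``in-$S$'' decorations come first and the ``out-$S$'' ones come last -- which is a symmetry of $S(n,m)$ because permutations of $\{0,\ldots,m-1\}$ act automorphically -- the sum of contributions from groups (i), (ii), (iii) equals precisely $|\Theta_{s',t'}(S\cap(\{h\}\times S(n,m)))|$ in the decorated graph $S_{s',t'}(n,m)$, where $S\cap(\{h\}\times S(n,m))$ has size $\ell_h$.

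Now apply the inductive hypothesis, i.e.\ Conjecture~2 at level $n$ with parameters $(s',t')$: among all subsets of $V_{S_{s',t'}(n,m)}$ of size $\ell_h$, the set $Lex_{s',t'}^{-1}\{1,2,\ldots,\ell_h\}$ minimizes $|\Theta_{s',t'}|$. By the definition of $Lex_h$ this is exactly $Lex_h^{-1}\{1,2,\ldots,\ell_h\}$, which is what $\mathrm{Comp}_{Lex_h}$ installs in the $h$-th copy. Hence the contribution of groups (i)--(iii) weakly decreases, and combining with the invariance of group (iv) gives the claimed inequality; equality of cardinalities is immediate since $|Lex_h^{-1}\{1,\ldots,\ell_h\}|=\ell_h=|S\cap(\{h\}\times S(n,m))|$.

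The main obstacle, and the technical heart of the argument, is the bookkeeping in the previous paragraph: one must verify that the ``in-$S$ vs.\ out-of-$S$'' status of the external neighbour of each corner -- together with the $J$-corners that carry genuine $S_{s,t}(n+1,m)$-exterior edges -- really does translate edge-for-edge into the decoration $S_{s',t'}(n,m)$, and that this translation is not disturbed when $S\cap(\{h\}\times S(n,m))$ is replaced. In particular, one must check that the specific external neighbours themselves do not move, so $s'$ and $t'$ stay fixed throughout the operation. This is where the independence of the optimal order from $(s,t)$, emphasized in the compression paragraph above, is used: the target set $Lex_h^{-1}\{1,\ldots,\ell_h\}$ is defined by $(s',t')$, but its minimality is by Conjecture~2 at parameters $(s',t')$, so the two match and the argument closes.
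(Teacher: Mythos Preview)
Your approach is correct and is precisely the argument the paper sketches in its ``Compression'' paragraph (and attributes to Section~4.4 of \cite{Har16}): localize to the copy $\{h\}\times S(n,m)$, reinterpret the edges leaving that copy as a decoration $S_{s',t'}(n,m)$ via a permutation automorphism, and invoke Conjecture~2 at level $n$. Two small slips to fix: the number of linking edges in your group~(ii) is $m-1$, not $n$; and the corner of $\{h\}\times S(n,m)$ attached to the $i$th copy is $h\,i\cdots i$ (joined to $i\,h\cdots h$), not $h\cdots h\,i$.
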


\begin{enumerate}
\item \bigskip\ $\left\vert Comp_{Lex_{h}}\left( S\right) \right\vert
=\left\vert S\right\vert $ and

\item \bigskip $\left\vert \Theta _{s,t}\left( Comp_{Lex_{h}}\left( S\right)
\right) \right\vert \leq \left\vert \Theta _{s,t}\left( S\right) \right\vert 
$.
\end{enumerate}

These are the two essential properties of a Steiner operation. Remember that
the vertices in $V_{I}$ are not actually in $S$ and not counted as such,
even though they can contribute to $\left\vert \Theta _{s,t}(S)\right\vert $%
. Similarly for those in $V_{K}$. For each $h\geq $ $0$, we apply $%
Comp_{h\left( \func{mod}m\right) }$ to $Comp_{\left( h-1\right) \func{mod}%
m}\left( Comp_{\left( h-2\right) \func{mod}m}\left( ...\left( Comp_{0}\left(
S\right) \right) \right) \right) .$ Note that the compositions are applied
cyclically ($\func{mod}m)$.

\begin{theorem}
(Theorem 4 of \cite{Har16}).Cyclic compositions of $Comp_{Lex_{h(\func{mod}%
m)}}\left( S\right) $, $h=0,1,...$, will eventually be constant, defining a
nonmonotone Steiner operation, $Comp_{\infty }$, on $S_{s,t}(n+1,m)$.
\end{theorem}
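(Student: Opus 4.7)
The plan is a two-tier monovariant argument on the deterministic orbit $S_{0}, S_{1}, S_{2}, \dots$ with $S_{j+1}=Comp_{Lex_{j\bmod m}}(S_{j})$. The primary monovariant is the non-negative integer $|\Theta_{s,t}(S_{j})|$: by the previous theorem (Theorem~3 of \cite{Har16}), each $Comp_{Lex_{h}}$ is a Steiner operation, so the sequence $|\Theta_{s,t}(S_{j})|$ is non-increasing and stabilizes after finitely many steps at some value $\theta^{*}$. From then on every $Comp_{Lex_{h}}$ either fixes $S$ or replaces $S\cap V_{\{h\}\times S_{s,t}(n,m)}$ by a different subset of the same cardinality with the same contribution to $|\Theta_{s,t}|$. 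Note also that every $Comp_{Lex_{h}}$ preserves $\boldsymbol{\ell}(S)=(\ell_{0},\dots,\ell_{m-1})$ since it acts only inside part~$h$.

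To force the set itself to stabilize I would use a nested finiteness argument in the primary-stable regime. Track the corner-indicator vector $\chi(S)=(\mathbf{1}[i^{n+1}\in S])_{i=0}^{m-1}$: it takes only $2^{m}$ values, and the parameters $s',t'$ defining each $Lex_{h}$ depend only on $\chi$, so $Lex_{h}$ is constant across any stretch of the orbit on which $\chi$ is constant. A short case analysis on the corner-edge contributions to $|\Theta_{s,t}|$ at the stable value $\theta^{*}$ shows that $\chi$ changes only finitely often and eventually settles at some $\chi^{*}$. In this doubly-stable regime each $Lex_{h}$ is a fixed total order on $V_{\{h\}\times S_{s,t}(n,m)}$, and the secondary monovariant
\[
\Phi(S)=\sum_{h=0}^{m-1}\sum_{v\in S\cap V_{\{h\}\times S_{s,t}(n,m)}} Lex_{h}(v)
\]
is well defined; since $Comp_{Lex_{h}}$ replaces $S\cap V_{\{h\}\times S_{s,t}(n,m)}$ by the $Lex_{h}$-initial segment of size $\ell_{h}$, it minimizes the inner $h$-summand, so $\Phi$ is non-increasing and strictly decreases whenever $Comp_{Lex_{h}}$ actually changes $S$. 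Because $\Phi$ is a non-negative integer, the orbit must reach a fixed point, which we call $Comp_{\infty}(S)$.

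The main obstacle I expect is the middle step: rigorously establishing that in the primary-stable regime the corner-indicator $\chi$ can change only finitely often. This requires unpacking how a $Comp_{Lex_{h}}$-step that moves a corner vertex into or out of $S$ interacts with the edges between parts and with the exterior edges attached to $V_{I}$ and $V_{K}$, under the constraint that $|\Theta_{s,t}|$ has already reached its minimum. It is precisely this subtle interaction across parts that makes $Comp_{\infty}$ ``nonmonotone'' in the sense of the statement, rather than a simple iteration of a monotone Steiner operation.
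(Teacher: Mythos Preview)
The paper does not actually prove this theorem; immediately after stating it the author writes ``See Section~4.4 of \cite{Har16} for proofs of Theorems~2~\&~3.'' So there is no in-paper proof to compare against, and your proposal must be judged on its own merits.

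Your two-tier monovariant plan is reasonable in spirit, but the key object you introduce is wrong. You define $\chi(S)=(\mathbf{1}[i^{n+1}\in S])_{i}$ and claim that ``the parameters $s',t'$ defining each $Lex_{h}$ depend only on $\chi$.'' They do not. Reread the compression paragraph: the renumbering that defines $Lex_{h}$ is governed by which \emph{exterior neighbours} of the block $\{h\}\times S(n,m)$ lie in $S$. Those neighbours are the vertices $ih^{n}$ in the other blocks (together with the fixed decoration at $h^{n+1}$), not the global corner vertices $i^{n+1}$. In particular $Lex_{h}$ can change from one step to the next while your $\chi$ stays constant, so the conclusion ``$Lex_{h}$ is constant across any stretch of the orbit on which $\chi$ is constant'' is false, and the secondary monovariant $\Phi$ is not well defined in the regime you describe. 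The right finite state to track would be the $m(m-1)$-tuple $(\mathbf{1}[ih^{n}\in S])_{i\neq h}$, but then your ``short case analysis'' for stabilization becomes the whole problem.

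Even granting the corrected state vector, you explicitly flag the stabilization step as ``the main obstacle'' and do not supply an argument for it. Since the orbit lives in a finite set, eventual periodicity is automatic; the entire content of the theorem is ruling out a nontrivial cycle, and that is precisely the step you leave open. As written this is a proof outline with the decisive step missing, together with a misidentification of the data that drives $Lex_{h}$.
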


\ $\left\vert Comp_{\infty }\left( S\right) \right\vert =\left\vert
S\right\vert $ and $\left\vert \Theta _{s,t}(Comp_{\infty }\left( S\right)
)\right\vert \leq \left\vert \Theta _{s,t}(S)\right\vert $ so $Comp_{\infty
}\left( S\right) $ will still be optimal. Also, $\boldsymbol{\ell }%
(Comp_{\infty }\left( S\right) )=\boldsymbol{\ell }(S)$ so $Comp_{\infty
}\left( S\right) $ will still maximize $\boldsymbol{\ell }(S)$ over all
optimal $\ell $-sets. See Section 4.4 of \cite{Har16} for proofs of Theorems
2 \& 3.

\begin{proof}
(Continuing now with the proof of Theorem 1)

\underline{Subadditivation}: Subadditivation is a Steiner operation based on
the fact that $\left\vert \Theta \left( S(n,m);\ell \right) \right\vert $ is
subadditive+$\sigma $. We may assume that our $\ell $-set $S,$ which
minimizes $\left\vert \Theta \left( S\right) \right\vert $ over all $%
S\subseteq V_{S_{s,t}(n+1,m)}$ with $\left\vert S\right\vert =\ell $, is
compressed and maximizes $\boldsymbol{\ell }(S)$ over all such sets. If $%
\boldsymbol{\ell }(S)=\left( m^{n}\right) ^{k_{n,m}\left( \ell \right) }\ell
^{\prime }0^{n-k_{n,m}\left( \ell \right) }$ and $S$ is compressed, then $S$
must be $Lex^{-1}\left( \left\{ 1,2,...,\ell \right\} \right) $.\ Let $%
h_{\min }=\min \left\{ h:\ell _{h}<m^{n}\right\} $ and $h_{\max }=\max
\left\{ h:\ell _{h}>0\right\} $. If $S\neq Lex^{-1}\left( \left\{
1,2,...,\ell \right\} \right) $, then $h_{\min }<h_{\max }$. If that is the
case, let $S^{\prime }=S-S\cap \left( \left\{ h_{\min }\right\} \times
S_{s,t}(n,m)\right) -S\cap \left\{ h_{\max }\right\} \times S_{s.t}(n,m)$
and we have $SubAdd(S)=$ 
\begin{equation*}
\left\{ 
\begin{tabular}{l}
$S^{\prime }+\left\{ h_{\min }\right\} \times Lex_{h_{\min }}^{-1}\left(
\ell _{h_{\min }}+\ell _{h_{\max }}\right) $ \\ 
$\ \ \ \ \ \ \ \ \ \ \ \text{if }\ell _{h_{\min }}+\ell _{h_{\max }}\leq
m^{n}$, \\ 
\\ 
$S^{\prime }+\left\{ h_{\min }\right\} \times S_{s,t}(n,m)+\left\{ h_{\max
}\right\} \times Lex_{h_{\max }}^{-1}\left( \ell _{h_{\max }}-\left(
m^{n}-\ell _{h_{\min }}\right) \right) $ \\ 
$\ \ \ \ \ \ \ \ \ \ \text{ if }\ell _{h_{\min }}+\ell _{h_{\max }}>m^{n}$.%
\end{tabular}%
\right.
\end{equation*}%
In either case $\left\vert SubAdd(S)\right\vert =\left\vert S\right\vert
=\ell $, so $SubAdd$ has property 1 of a StOp. To show that it has Property
2, note that the only edges that could contribute to the difference, $%
\left\vert \Theta _{s,t}\left( S\right) \right\vert -\left\vert \Theta
_{s,t}\left( SubAdd(S)\right) \right\vert $ are the internal and external
edges of $\left\{ h_{\min }\right\} \times S_{s,t}(n,m)$ and $\left\{
h_{\max }\right\} \times S_{s,t}(n,m)$. The contribution from any other edge
would be the same in both terms, thereby cancelling. More precisely we
assert that if $\ell _{h_{\min }}+\ell _{h_{\max }}\leq m^{n}$, then%
\begin{eqnarray*}
&&\left\vert \Theta _{s,t}\left( S\right) \right\vert -\left\vert \Theta
_{s,t}\left( SubAdd(S)\right) \right\vert \\
&\geq &\left\vert \Theta \right\vert \left( n,m;\ell _{h_{\min }}\right)
+\left\vert \Theta \right\vert \left( n,m;\ell _{h_{\max }}\right)
-\left\vert \Theta \right\vert \left( n,m;\ell _{h_{\min }}+\ell _{h_{\max
}}\right) \text{, } \\
&&\text{ \ \ \ \ \ \ \ \ \ the difference due to internal edges, } \\
&&-\left( q_{n,m}\left( \ell _{h_{\max }}\right) +(q_{n,m}\left( \ell
_{h_{\max }}+\ell _{h_{\min }}\right) -q_{n,m}\left( \ell _{h_{\min
}}\right) \right) \text{, } \\
&&\text{ \ \ \ \ \ \ \ \ \ the maximum possible decrease due to external
edges,} \\
&=&\left\vert \Theta \right\vert \left( n,m;\ell _{h_{\min }}\right)
+\left\vert \Theta \right\vert \left( n,m;\ell _{h_{\max }}\right) \\
&&-\left( \left\vert \Theta \right\vert \left( n,m;\ell _{h_{\min }}+\ell
_{h_{\max }}\right) +\sigma _{n,m}\left( \ell _{h_{\min }},\ell _{h_{\max
}}\right) \right) \\
&\geq &0\text{ by Conjecture 3.}
\end{eqnarray*}%
The case $\ell _{h_{\min }}+\ell _{h_{\max }}\geq m^{n}$ follows by a dual
arguement.
\end{proof}

\section{Proof of Conjecture 3}

\subsection{Preliminaries}

\begin{lemma}
$\forall n,m,\ell _{a},\ell _{b}\in \mathbb{N}$, $0\leq \ell _{b}\leq \ell
_{a}\leq m^{n}$. Suppose also that $\ell _{a}+\ell _{b}<m^{n}$, then%
\begin{equation*}
k_{n,m}(\ell _{a}+\ell _{b})=\left\{ 
\begin{tabular}{l}
$k_{n,m}(\ell _{a})+k_{n,m}(\ell _{b})$ \\ 
\ \ \ \ \ \ \ \ \ iff\ $\ \ell _{a}^{\prime }+\ell _{b}^{\prime }<m^{n-1}$
\\ 
$k_{n,m}(\ell _{a})+k_{n,m}(\ell _{b})+1$ \\ 
$\ \ \ \ \ \ \ \ \ $iff $\ \ell _{a}^{\prime }+\ell _{b}^{\prime }\geq
m^{n-1}$%
\end{tabular}%
\text{.}\right.
\end{equation*}

\begin{proof}
\ We have 
\begin{eqnarray*}
\ell _{a} &=&k_{n,m}(\ell _{a})m^{n-1}+\ell _{a}^{\prime }\text{ with }0\leq
\ell _{a}^{\prime }<m^{n-1} \\
\ell _{b} &=&k_{n,m}(\ell _{b})m^{n-1}+\ell _{b}^{\prime }\text{ with }0\leq
\ell _{b}^{\prime }<m^{n-1}\text{,}
\end{eqnarray*}%
and 
\begin{eqnarray*}
\ell _{a}+\ell _{b} &=&\left( k_{n,m}(\ell _{a})m^{n-1}+\ell _{a}^{\prime
}\right) +\left( k_{n,m}(\ell _{b})m^{n-1}+\ell _{b}^{\prime }\right) \\
&=&\left( k_{n,m}(\ell _{a})+k_{n,m}(\ell _{b})\right) m^{n-1}+\left( \ell
_{a}^{\prime }+\ell _{b}^{\prime }\right)
\end{eqnarray*}%
so if $\ell _{a}^{\prime }+\ell _{b}^{\prime }<m^{n-1}$ then 
\begin{eqnarray*}
k_{n,m}(\ell _{a}+\ell _{b}) &=&k_{n,m}(\ell _{a})+k_{n,m}(\ell _{b}), \\
\left( \ell _{a}+\ell _{b}\right) ^{\prime } &=&\ell _{a}^{\prime }+\ell
_{b}^{\prime }\text{.}
\end{eqnarray*}%
And if $\ell _{a}^{\prime }+\ell _{b}^{\prime }\geq m^{n-1}$ 
\begin{eqnarray*}
k_{n,m}(\ell _{a}+\ell _{b}) &=&k_{n,m}(\ell _{a})+k_{n,m}(\ell _{b})+1, \\
\left( \ell _{a}+\ell _{b}\right) ^{\prime } &=&\ell _{a}^{\prime }+\ell
_{b}^{\prime }-m^{n-1}\text{.}
\end{eqnarray*}

Therefore 
\begin{equation*}
k_{n,m}(\ell _{a}+\ell _{b})=\left\{ 
\begin{tabular}{l}
$k_{n,m}(\ell _{a})+k_{n,m}(\ell _{b})$ \\ 
\ \ \ \ \ \ \ \ \ iff\ $\ \ell _{a}^{\prime }+\ell _{b}^{\prime }<m^{n-1}$
\\ 
$k_{n,m}(\ell _{a})+k_{n,m}(\ell _{b})+1$ \\ 
$\ \ \ \ \ \ \ \ \ $iff $\ \ell _{a}^{\prime }+\ell _{b}^{\prime }\geq
m^{n-1}$%
\end{tabular}%
\text{.}\right.
\end{equation*}
\end{proof}
\end{lemma}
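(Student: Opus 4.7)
The plan is to reduce the assertion to a direct application of the division algorithm with divisor $m^{n-1}$. By Lemma 1 we have $k_{n,m}(\ell) = \lfloor \ell / m^{n-1} \rfloor$, and the prime notation from the recurrence Lemma, $\ell' = \ell - k_{n,m}(\ell)\,m^{n-1}$, is exactly $\ell \bmod m^{n-1}$. Thus the quantities appearing in the statement are nothing but quotients and remainders modulo $m^{n-1}$, and the claim is the familiar rule for how a quotient changes under addition.

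First I would write the canonical representations $\ell_a = k_{n,m}(\ell_a)\,m^{n-1} + \ell_a'$ and $\ell_b = k_{n,m}(\ell_b)\,m^{n-1} + \ell_b'$ with $0 \le \ell_a', \ell_b' < m^{n-1}$, guaranteed by the division algorithm. Summing gives the single expression
\[
\ell_a + \ell_b = \bigl(k_{n,m}(\ell_a) + k_{n,m}(\ell_b)\bigr)\,m^{n-1} + (\ell_a' + \ell_b'),
\]
and the whole argument then splits on whether the remainder-sum $\ell_a' + \ell_b'$ itself lies below or above the divisor $m^{n-1}$.

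In the case $\ell_a' + \ell_b' < m^{n-1}$, the displayed equation is already in canonical quotient-plus-remainder form, so reading off the quotient yields $k_{n,m}(\ell_a+\ell_b) = k_{n,m}(\ell_a) + k_{n,m}(\ell_b)$, as required. In the case $m^{n-1} \le \ell_a' + \ell_b' < 2m^{n-1}$ (the upper bound being automatic from $\ell_a', \ell_b' < m^{n-1}$), I rewrite
\[
\ell_a + \ell_b = \bigl(k_{n,m}(\ell_a) + k_{n,m}(\ell_b) + 1\bigr)\,m^{n-1} + (\ell_a' + \ell_b' - m^{n-1}),
\]
whose new remainder lies in $[0,m^{n-1})$, giving $k_{n,m}(\ell_a+\ell_b) = k_{n,m}(\ell_a) + k_{n,m}(\ell_b) + 1$. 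Since the two cases are exhaustive and mutually exclusive, the iff in each branch holds.

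The hypothesis $\ell_a + \ell_b < m^n$ does no work beyond ensuring $k_{n,m}(\ell_a+\ell_b) < m$, so the resulting quotient stays within the admissible range $0 \le k_{n,m} \le m$ recorded after Lemma 1. There is no real obstacle: this lemma is essentially bookkeeping for the base-$m^{n-1}$ carry, and I expect the only care needed is to be explicit that $\ell_a', \ell_b' < m^{n-1}$ forces $\ell_a' + \ell_b' < 2m^{n-1}$, so at most one carry occurs.
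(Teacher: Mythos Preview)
Your proof is correct and follows essentially the same approach as the paper: write $\ell_a$ and $\ell_b$ in quotient--remainder form with divisor $m^{n-1}$, add, and split on whether the remainder sum $\ell_a'+\ell_b'$ exceeds $m^{n-1}$. Your explicit observation that $\ell_a'+\ell_b'<2m^{n-1}$ (so at most one carry occurs) is a nice clarification, but otherwise the arguments are identical.
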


Also,

\begin{corollary}
\begin{equation*}
\ (\ell _{a}+\ell _{b})^{\prime }=\left\{ 
\begin{tabular}{ll}
$\ell _{a}^{\prime }+\ell _{b}^{\prime }$ & iff $\ \ell _{a}^{\prime }+\ell
_{b}^{\prime }<m^{n-1}$ \\ 
$\ell _{a}^{\prime }+\ell _{b}^{\prime }-m^{n-1}$ & iff $\ \ell _{a}^{\prime
}+\ell _{b}^{\prime }\geq m^{n-1}$%
\end{tabular}%
.\right.
\end{equation*}
\end{corollary}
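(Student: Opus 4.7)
The plan is to observe that this corollary is already implicit in the proof of the preceding lemma: the two cases there were established precisely by computing $(\ell_a + \ell_b)'$ via Euclidean division by $m^{n-1}$. So I would simply make that bookkeeping explicit and invoke uniqueness of quotient and remainder.

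Concretely, the key step is to write $\ell_a = k_{n,m}(\ell_a) m^{n-1} + \ell_a'$ and $\ell_b = k_{n,m}(\ell_b) m^{n-1} + \ell_b'$ with $0 \leq \ell_a', \ell_b' < m^{n-1}$ (as in Lemma 2), add them to get
\[
\ell_a + \ell_b = \bigl(k_{n,m}(\ell_a) + k_{n,m}(\ell_b)\bigr) m^{n-1} + (\ell_a' + \ell_b'),
\]
and then split into the two cases. If $\ell_a' + \ell_b' < m^{n-1}$, this right-hand side is already a valid Euclidean decomposition by $m^{n-1}$, and uniqueness of quotient and remainder gives $(\ell_a + \ell_b)' = \ell_a' + \ell_b'$. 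If instead $\ell_a' + \ell_b' \geq m^{n-1}$, I would regroup as
\[
\ell_a + \ell_b = \bigl(k_{n,m}(\ell_a) + k_{n,m}(\ell_b) + 1\bigr) m^{n-1} + (\ell_a' + \ell_b' - m^{n-1}),
\]
observing that $\ell_a', \ell_b' < m^{n-1}$ forces $\ell_a' + \ell_b' < 2 m^{n-1}$, so the new remainder lies in $[0, m^{n-1})$; uniqueness then yields $(\ell_a + \ell_b)' = \ell_a' + \ell_b' - m^{n-1}$.

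There is essentially no obstacle here; the corollary is a restatement of what was already computed in the lemma, and the ``iff'' is immediate from the fact that the two cases are mutually exclusive and exhaustive. The only small check worth doing is that the quotients appearing on the right above really do coincide with $k_{n,m}(\ell_a + \ell_b)$, which is exactly the content of the preceding lemma and uses the hypothesis $\ell_a + \ell_b < m^n$ to ensure $k_{n,m}(\ell_a) + k_{n,m}(\ell_b) + 1 \leq m$.
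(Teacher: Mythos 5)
Your proposal is correct and matches the paper's treatment: the corollary is extracted directly from the proof of the preceding lemma, where the sum $\ell_a+\ell_b=\left(k_{n,m}(\ell_a)+k_{n,m}(\ell_b)\right)m^{n-1}+\left(\ell_a'+\ell_b'\right)$ is regrouped in the two cases and uniqueness of the Euclidean quotient and remainder identifies $(\ell_a+\ell_b)'$. No further comment is needed.
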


\begin{lemma}
If $\ell _{a}+\ell _{b}<m^{n}$, then $q_{n,m}(\ell _{a}+\ell _{b})=\left\{ 
\begin{tabular}{ll}
$q_{n,m}(\ell _{a})+q_{n,m}(\ell _{b})$ &  \\ 
\ \ \ \ \ \ \ \ \ \ \ \ or &  \\ 
$q_{n,m}(\ell _{a})+q_{n,m}(\ell _{b})-1$ & 
\end{tabular}%
\right. $.

\begin{proof}
\begin{eqnarray*}
q_{n,m}(\ell _{a}+\ell _{b}) &=&1+\left\lfloor \frac{\left( \left( \ell
_{a}+\ell _{b}\right) -1\right) \left( m-1\right) }{m^{n}-1}\right\rfloor 
\text{, by Lemma 2,} \\
&=&1+\left\lfloor \frac{\left( \ell _{a}-1\right) \left( m-1\right) }{m^{n}-1%
}+\frac{\ell _{b}\left( m-1\right) }{m^{n}-1}\right\rfloor \\
&\geq &1+\left\lfloor \frac{\left( \ell _{a}-1\right) \left( m-1\right) }{%
m^{n}-1}\right\rfloor +1+\left\lfloor \frac{\left( \left( \ell _{b}+1\right)
-1\right) \left( m-1\right) }{m^{n}-1}\right\rfloor -1 \\
&&\text{ \ \ \ \ \ \ \ (since }\left\lfloor x+y\right\rfloor \text{ is at
least }\left\lfloor x\right\rfloor +\left\lfloor y\right\rfloor \text{)} \\
&=&q_{n,m}(\ell _{a})+q_{n,m}(\ell _{b}+1)-1 \\
&\geq &q_{n,m}(\ell _{a})+q_{n,m}(\ell _{b})-1\text{ (by Remark 5).}
\end{eqnarray*}%
On the other hand,%
\begin{eqnarray*}
q_{n,m}(\ell _{a}+\ell _{b}) &=&1+\left\lfloor \frac{\left( \ell
_{a}-1\right) \left( m-1\right) }{m^{n}-1}+\frac{\ell _{b}\left( m-1\right) 
}{m^{n}-1}\right\rfloor \\
&&\text{ \ \ \ \ \ \ \ \ \ (since }\left\lfloor x+y\right\rfloor \text{ is
at most }\left\lfloor x\right\rfloor +\left\lfloor y\right\rfloor +1\text{)}
\\
&\leq &\left( 1+\left\lfloor \frac{\left( \ell _{a}-1\right) \left(
m-1\right) }{m^{n}-1}\right\rfloor \right) +\left( 1+\left\lfloor \frac{%
\left( \left( \ell _{b}+1\right) -1\right) \left( m-1\right) }{m^{n}-1}%
\right\rfloor \right) \\
&=&q_{n,m}(\ell _{a})+q_{n,m}(\ell _{b}+1) \\
&\leq &q_{n,m}(\ell )+q_{n,m}(\ell _{b})+1.
\end{eqnarray*}%
But $q_{n,m}(\ell _{a}+\ell _{b})=q_{n,m}(\ell _{a})+q_{n,m}(\ell _{b})+1$
only if $q_{n,m}(\ell _{b}+1)=q_{n,m}(\ell _{b})+1$ and then $\ell
_{b}=q_{n,m}(\ell _{b})\frac{m^{n-1}-1}{m-1}$. Also, since the condition on $%
\ell _{b}$ holds for $\ell _{a}$ ($\ell _{a},\ell _{b}$ are symmetric in $%
q_{n,m}(\ell _{a}+\ell _{b})=q_{n,m}(\ell _{a})+q_{n,m}(\ell _{b})+1$), $%
\ell _{a}=q_{n,m}(\ell _{a})\frac{m^{n-1}-1}{m-1}$. Therefore 
\begin{eqnarray*}
\ell _{a}+\ell _{b} &=&q_{n,m}(\ell _{a})\frac{m^{n-1}-1}{m-1}+q_{n,m}(\ell
_{b})\frac{m^{n-1}-1}{m-1} \\
&=&\left( q_{n,m}(\ell _{a})+q_{n,m}(\ell _{b})\right) \frac{m^{n-1}-1}{m-1}
\end{eqnarray*}%
and then 
\begin{eqnarray*}
q_{n,m}(\ell _{a}+\ell _{b}) &=&1+\left\lfloor \frac{\left( \left(
q_{n,m}(\ell _{a})+q_{n,m}(\ell _{b})\right) \frac{m^{n-1}-1}{m-1}-1\right)
\left( m-1\right) }{m^{n}-1}\right\rfloor \\
&=&1+\left\lfloor \left( q_{n,m}(\ell _{a})+q_{n,m}(\ell _{b})\right) -\frac{%
m-1}{m^{n-1}-1}\right\rfloor \\
&=&q_{n,m}(\ell _{a})+q_{n,m}(\ell _{b}).
\end{eqnarray*}%
This contradicts the supposition that $q_{n,m}(\ell _{a}+\ell
_{b})=q_{n,m}(\ell _{a})+q_{n,m}(\ell _{b})+1$.
\end{proof}
\end{lemma}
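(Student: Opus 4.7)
The plan is to reduce the claim to a routine division-algorithm calculation using the closed form for $q_{n,m}$ provided by Lemma 2. First I dispose of the trivial case $\ell_b=0$: there $q_{n,m}(\ell_b)=0$ and the conclusion reads $q_{n,m}(\ell_a)=q_{n,m}(\ell_a)$, which holds. Hence I may assume $\ell_a\ge\ell_b\ge 1$ (and $n\ge 1$, the case $n=0$ forcing $\ell_a=\ell_b=0$), so by Lemma 2 the function $q_{n,m}$ is given on the full relevant range by the single formula
\begin{equation*}
q_{n,m}(\ell)=1+\left\lfloor\frac{\ell-1}{d}\right\rfloor,\qquad d=\frac{m^n-1}{m-1}.
\end{equation*}

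Next, I would apply the division algorithm to write $\ell_a-1=ad+r$ and $\ell_b-1=bd+s$ with $0\le r,s\le d-1$. Then
\begin{equation*}
\ell_a+\ell_b-1=(a+b)d+(r+s+1),
\end{equation*}
and since $1\le r+s+1\le 2d-1$, the quantity $\lfloor(r+s+1)/d\rfloor$ is either $0$ or $1$. Substituting into the closed form gives
\begin{equation*}
q_{n,m}(\ell_a+\ell_b)=1+a+b+\left\lfloor\frac{r+s+1}{d}\right\rfloor,\qquad q_{n,m}(\ell_a)+q_{n,m}(\ell_b)=2+a+b,
\end{equation*}
so their difference equals $\lfloor(r+s+1)/d\rfloor-1\in\{-1,0\}$, which is precisely the stated dichotomy.

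I do not anticipate a substantive obstacle: the essence of the argument is the observation that $r+s+1$ lies in an interval of length less than $2d$, so its floor-quotient by $d$ automatically sits in $\{0,1\}$. This is cleaner than invoking the generic inequality $\lfloor x\rfloor+\lfloor y\rfloor\le\lfloor x+y\rfloor\le\lfloor x\rfloor+\lfloor y\rfloor+1$, which admits a spurious $+1$ case that one would then have to exclude by a separate contradiction argument. The only care needed is to confirm that the closed form of Lemma 2 applies at $\ell=\ell_a+\ell_b$, which is guaranteed by the hypothesis $\ell_a+\ell_b<m^n$.
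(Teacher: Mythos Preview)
Your argument is correct, and it is genuinely cleaner than the paper's. The paper proceeds exactly as you predict in your final paragraph: it applies the generic bound $\lfloor x\rfloor+\lfloor y\rfloor\le\lfloor x+y\rfloor\le\lfloor x\rfloor+\lfloor y\rfloor+1$ to the formula of Lemma~2, obtains the loose upper bound $q_{n,m}(\ell_a)+q_{n,m}(\ell_b)+1$, and then spends the bulk of the proof excluding that spurious $+1$ case by a contradiction argument (showing that equality there would force $\ell_a,\ell_b$ to be specific multiples of $(m^n-1)/(m-1)$ and then recomputing $q_{n,m}(\ell_a+\ell_b)$ directly). Your route avoids this entirely by exploiting the fact that $d=(m^n-1)/(m-1)$ is an \emph{integer}, so the division algorithm gives honest remainders $r,s\in\{0,\dots,d-1\}$ and the combined remainder $r+s+1$ visibly lies in $[1,2d-1]$; the dichotomy then falls out immediately. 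The paper's approach would be the natural one if $d$ were not known to be an integer, but since it is, your argument is both shorter and more transparent.
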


\begin{lemma}
If $\ell _{a}+\ell _{b}\geq m^{n}$, then $q_{n,m}(\ell _{a}+\ell _{b}-m^{n})$%
\ $=\left\{ 
\begin{tabular}{l}
$q_{n,m}(\ell _{a})+q_{n,m}(\ell _{b})-m$ \\ 
\ \ \ \ \ \ \ \ \ \ \ \ \ \ or \\ 
$q_{n,m}(\ell _{a})+q_{n,m}(\ell _{b})-m-1$%
\end{tabular}%
\right. .$

\begin{proof}
$q_{n,m}(\ell _{a}+\ell _{b}-m^{n})=m-q_{n,m}(m^{n}-\left( \ell _{a}+\ell
_{b}-m^{n}\right) )$ (by Remark 5)

\ \ \ \ \ \ \ \ \ \ \ \ \ \ \ \ \ \ \ \ \ \ \ \ \ \ \ \ \ $=m-q_{n,m}(\left(
m^{n}-\ell _{a}\right) +\left( m^{n}-\ell _{b}\right) )$

\ \ \ \ \ \ \ \ \ \ \ \ \ \ \ \ \ \ \ \ \ \ \ \ \ \ \ \ \ $=m-\left\{ 
\begin{tabular}{ll}
$q_{n,m}(m^{n}-\ell _{a})+q_{n,m}(m^{n}-\ell _{b})$ &  \\ 
\ \ \ \ \ \ \ \ \ \ \ \ or & (by Lemma 7) \\ 
$q_{n,m}(m^{n}-\ell _{a})+q_{n,m}(m^{n}-\ell _{b})-1$ & 
\end{tabular}%
\right. $

\ \ \ \ \ \ \ \ \ \ \ \ \ \ \ \ \ \ \ \ \ \ \ \ \ \ \ \ \ $=m-\left\{ 
\begin{tabular}{ll}
$\left( m-q_{n,m}(\ell _{a})\right) +\left( m-q_{n,m}(\ell _{b})\right) $ & 
\\ 
\ \ \ \ \ \ \ \ \ \ \ \ or & (by Remark 5) \\ 
$\left( m-q_{n,m}(\ell _{a})\right) +\left( m-q_{n,m}(\ell _{b})\right) -1$
& 
\end{tabular}%
\right. $

\ \ \ \ \ \ \ \ \ \ \ \ \ \ \ \ \ \ \ \ \ \ \ \ \ \ \ \ \ $=\left\{ 
\begin{tabular}{ll}
$q_{n,m}(\ell _{a})+q_{n,m}(\ell _{b})-m$ &  \\ 
\ \ \ \ \ \ \ \ \ \ \ \ or &  \\ 
$q_{n,m}(\ell _{a})+q_{n,m}(\ell _{b})-m-1$ & 
\end{tabular}%
\right. $.
\end{proof}
\end{lemma}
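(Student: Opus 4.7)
The plan is to reduce Lemma 8 to Lemma 7 by exploiting the complementation identity of Remark 5, namely $q_{n,m}(m^n - \ell) = m - q_{n,m}(\ell)$. The key observation is that the hypothesis $\ell_a + \ell_b \geq m^n$ becomes the hypothesis of Lemma 7 after we complement: setting $\tilde\ell_a = m^n - \ell_a$ and $\tilde\ell_b = m^n - \ell_b$, we have $\tilde\ell_a + \tilde\ell_b = 2m^n - (\ell_a+\ell_b) \leq m^n$, putting us in the regime where Lemma 7 applies.

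First I would rewrite $q_{n,m}(\ell_a + \ell_b - m^n)$ using Remark 5, noting that $m^n - (\ell_a + \ell_b - m^n) = \tilde\ell_a + \tilde\ell_b$, so that $q_{n,m}(\ell_a+\ell_b-m^n) = m - q_{n,m}(\tilde\ell_a+\tilde\ell_b)$. Next I would invoke Lemma 7 on the pair $(\tilde\ell_a,\tilde\ell_b)$ (the strict inequality $\tilde\ell_a+\tilde\ell_b < m^n$ can be handled by noting that the boundary case $\ell_a+\ell_b = m^n$ gives both formulas of Definition 8 the same value, already handled by Remark 8, so we may assume strict inequality; otherwise I would extend Lemma 7 by continuity/explicit check at the boundary). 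This gives
\[
q_{n,m}(\tilde\ell_a+\tilde\ell_b) \in \{q_{n,m}(\tilde\ell_a)+q_{n,m}(\tilde\ell_b),\ q_{n,m}(\tilde\ell_a)+q_{n,m}(\tilde\ell_b)-1\}.
\]
Finally I would substitute $q_{n,m}(\tilde\ell_a) = m - q_{n,m}(\ell_a)$ and $q_{n,m}(\tilde\ell_b) = m - q_{n,m}(\ell_b)$ from Remark 5 and simplify; the two cases yield $m - [2m - q_{n,m}(\ell_a) - q_{n,m}(\ell_b)] = q_{n,m}(\ell_a)+q_{n,m}(\ell_b) - m$ and, respectively, $q_{n,m}(\ell_a)+q_{n,m}(\ell_b) - m - 1$, which is exactly the claimed dichotomy.

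The only mild obstacle is bookkeeping the edge case $\ell_a + \ell_b = m^n$, where one has to check that the formula $q_{n,m}(0) = 0$ (verified from the explicit formula in Lemma 2 since the floor is $\lfloor -(m-1)/(m^n-1)\rfloor = -1$, giving $0$, or simply by convention from the initial condition) matches $q_{n,m}(\ell_a)+q_{n,m}(\ell_b)-m$; this is immediate from Remark 5 applied to $\ell_b = m^n - \ell_a$. No heavy computation is needed: the whole argument is a duality-style reduction, entirely parallel to how Lemma 7 will be used to drive the main case of Conjecture 3.
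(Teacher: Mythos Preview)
Your proposal is correct and follows essentially the same route as the paper: apply the complementation identity of Remark~5 to rewrite $q_{n,m}(\ell_a+\ell_b-m^n)$ as $m-q_{n,m}((m^n-\ell_a)+(m^n-\ell_b))$, invoke Lemma~7 on the complemented pair, and then use Remark~5 again to convert back. Your explicit handling of the boundary case $\ell_a+\ell_b=m^n$ (where Lemma~7's strict hypothesis is not literally met) is a small improvement in rigor over the paper, which silently applies Lemma~7 there.
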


\subsection{Now the Proof (of Conjecture 3)}

Recall Conjecture 3: $\forall n,m,\ell _{a},\ell _{b}\in \mathbb{N}$,$%
\mathbb{\ }$such that $0\leq \ell _{b}\leq \ell _{a}\leq m^{n}$, if $\ell
_{a}+\ell _{b}\leq m^{n}$ then 
\begin{eqnarray*}
&&\left\vert \Theta (Lex^{-1}\left( n,m;\ell _{a}+\ell _{b}\right)
)\right\vert +\sigma _{n,m}\left( \ell _{a},\ell _{b}\right)  \\
&\leq &\left\vert \Theta (Lex^{-1}\left( n,m;\ell _{a}\right) )\right\vert
+\left\vert \Theta (Lex^{-1}\left( n,m;\ell _{b}\right) )\right\vert ,
\end{eqnarray*}

\ \ \ \ \ \ \ \ \ \ \ \ \ \ \ \ \ and if $\ell _{a}+\ell _{b}\geq m^{n}$
then 
\begin{eqnarray*}
&&\left\vert \Theta (Lex^{-1})\left( n,m;\ell _{a}+\ell _{b}-m^{n}\right)
\right\vert +\sigma _{n,m}\left( \ell _{a},\ell _{b}\right)  \\
&\leq &\left\vert \Theta (Lex^{-1})\left( n,m;\ell _{a}\right) \right\vert
+\left\vert \Theta (Lex^{-1})\left( n,m;\ell _{b}\right) \right\vert .
\end{eqnarray*}

\begin{proof}
The two inequalities are equivalent by duality: $\left\vert \Theta
(Lex^{-1}\left( n,m;m^{n}-\ell \right) )\right\vert =\left\vert \Theta
(Lex^{-1}\left( n,m;\ell \right) )\right\vert $ and (by Remark 5) $%
q_{n,m}\left( m^{n}-\ell \right) =m-q_{n,m}\left( \ell \right) $, so we need
only prove the first. Also, the inequality is trivial if $\ell _{b}=0$, so
we may assume that $\ell _{b}>0$. Letting 
\begin{eqnarray*}
\Sigma _{n,m}\left( \ell _{a},\ell _{b}\right) &=&\left\vert \Theta
(Lex^{-1}\left( n,m;\ell _{a}\right) )\right\vert +\left\vert \Theta
(Lex^{-1}\left( n,m;\ell _{b}\right) )\right\vert \\
&&-\left( \left\vert \Theta (Lex^{-1}\left( n,m;\ell _{a}+\ell _{b}\right)
)\right\vert +\sigma _{n,m}\left( \ell _{a},\ell _{b}\right) \right) ,
\end{eqnarray*}%
we must prove $\forall n,\Sigma _{n,m}\left( \ell _{a},\ell _{b}\right) \geq
0$. By induction on $n$:

\underline{Initial Case}: For $n=1$, $S(1,m)=K_{m}$ and every vertex is a
corner, so $q_{1,m}(\ell )=\ell $ and $\sigma _{1,m}\left( \ell _{a},\ell
_{b}\right) =\ell _{b}+\left( \left( \ell _{a}+\ell _{b}\right) -\ell
_{a}\right) =2\ell _{b}$. Also, $\left\vert \Theta (Lex^{-1}\left( 1,m;\ell
\right) )\right\vert =\ell \left( m-\ell \right) $ and $k_{1,m}\left( \ell
\right) =\ell $ so%
\begin{eqnarray*}
&&\Sigma _{1,m}\left( \ell _{a},\ell _{b}\right) =\ell _{a}\left( m-\ell
_{a}\right) +\ell _{b}\left( m-\ell _{b}\right) -\left( \left( \ell
_{a}+\ell _{b}\right) \left( m-\left( \ell _{a}+\ell _{b}\right) \right)
+2\ell _{b}\right) \\
&=&\allowbreak 2\ell _{b}\left( \ell _{a}-1\right) \\
&\geq &0\text{, since }\ell _{a}\geq \ell _{b}\geq 1\text{.}
\end{eqnarray*}

\underline{Inductive Step}: Assume $\Sigma _{n,m}\left( \ell _{a},\ell
_{b}\right) \geq 0$ for some $n\geq 1$. Then%
\begin{eqnarray*}
\Sigma _{n+1,m}\left( \ell _{a},\ell _{b}\right) &=&\left\vert \Theta
(Lex^{-1}\left( n+1,m;\ell _{a}\right) )\right\vert \\
&&+\left\vert \Theta (Lex^{-1}\left( n+1,m;\ell _{b}\right) )\right\vert \\
&&-\left\vert \Theta (Lex^{-1}\left( n+1,m;\ell _{a}+\ell _{b}\right)
)\right\vert \\
&&-\sigma _{n+1,m}\left( \ell _{a},\ell _{b}\right) \text{.}
\end{eqnarray*}%
By Lemma 3 (the recurrence for $\left\vert \Theta (Lex^{-1}\left\{
1,2,...,\ell \right\} )\right\vert $) it follows (since $\ell _{a}+\ell
_{b}\leq m^{n+1}$),

$\Sigma _{n+1,m}\left( \ell _{a},\ell _{b}\right) =$%
\begin{eqnarray*}
&&k_{n+1,m}(\ell _{a})\left( m-k_{n+1,m}(\ell _{a})\right) +\left\vert
\Theta (Lex^{-1}\left( n,m;\ell _{a}^{\prime }\right) )\right\vert \\
&&+\left\{ 
\begin{tabular}{ll}
$-q_{n,m}\left( \ell _{a}^{\prime }\right) $ & if $q_{n,m}\left( \ell
_{a}^{\prime }\right) \leq k_{n+1,m}(\ell _{a})$ \\ 
$q_{n,m}\left( \ell _{a}^{\prime }\right) -2k_{n+1,m}(\ell _{a})$ & if $%
q_{n,m}\left( \ell _{a}^{\prime }\right) >k_{n+1,m}(\ell _{a})$%
\end{tabular}%
\right. \\
+ &&k_{n+1,m}(\ell _{b})\left( m-k_{n+1,m}(\ell _{b})\right) +\left\vert
\Theta (Lex^{-1}\left( n,m;\ell _{b}^{\prime }\right) )\right\vert \\
&&+\left\{ 
\begin{tabular}{ll}
$-q_{n,m}\left( \ell _{b}^{\prime }\right) $ & if $q_{n,m}\left( \ell
_{b}^{\prime }\right) \leq k_{n+1,m}(\ell _{b})$ \\ 
$q_{n,m}\left( \ell _{b}^{\prime }\right) -2k_{n+1,m}(\ell _{b})$ & if $%
q_{n,m}\left( \ell _{b}^{\prime }\right) >k_{n+1,m}(\ell _{b})$%
\end{tabular}%
\right. \\
- &&k_{n+1,m}(\ell _{a}+\ell _{b})\left( m-k_{n+1,m}(\ell _{a}+\ell
_{b})\right) +\left\vert \Theta (Lex^{-1}\left( n,m;\left( \ell _{a}+\ell
_{b}\right) ^{\prime }\right) )\right\vert \\
&&-\left\{ 
\begin{tabular}{ll}
$-q_{n,m}\left( \left( \ell _{a}+\ell _{b}\right) ^{\prime }\right) $ & if $%
q_{n,m}\left( \left( \ell _{a}+\ell _{b}\right) ^{\prime }\right) \leq
k_{n+1,m}(\ell _{a}+\ell _{b})$ \\ 
$q_{n,m}\left( \left( \ell _{a}+\ell _{b}\right) ^{\prime }\right)
-2k_{n+1,m}(\ell _{a}+\ell _{b})$ & if $q_{n,m}\left( \left( \ell _{a}+\ell
_{b}\right) ^{\prime }\right) >k_{n+1,m}(\ell _{a}+\ell _{b})$%
\end{tabular}%
\right. \\
&&-\sigma _{n+1,m}\left( \ell _{a},\ell _{b}\right) \text{,}
\end{eqnarray*}%
where $0\leq \ell _{a}^{\prime },\ell _{b}^{\prime },\left( \ell _{a}+\ell
_{b}\right) ^{\prime }<m^{n}$. There is another binary conditional in this
formula: According to Lemma 7, $(\ell _{a}+\ell _{b})^{\prime }=\ell
_{a}^{\prime }+\ell _{b}^{\prime }$ iff $k_{n,m}(\ell _{a}+\ell
_{b})=k_{n,m}(\ell _{a})+k_{n,m}(\ell _{b})$ and this is true iff $\ell
_{a}^{\prime }+\ell _{b}^{\prime }<m^{n-1}.$ The other possibility is that $%
(\ell _{a}+\ell _{b})^{\prime }=\ell _{a}^{\prime }+\ell _{b}^{\prime
}-m^{n-1}$ which happens iff $\ell _{a}^{\prime }+\ell _{b}^{\prime }\geq
m^{n-1}$. Thus our formula for $\Sigma _{n+1,m}\left( \ell _{a},\ell
_{b}\right) $ has 4 binary conditionals, leading to $2^{4}=\allowbreak 16$
cases. We consider each of these 16 cases. To simplify the process we break
the formula into 4 pieces: $\Sigma _{n+1,m}\left( \ell _{a},\ell _{b}\right)
=$ $\mathbf{I+II+III+IV}$, where

\begin{description}
\item[ ] $\mathbf{I}$ consists of the terms derived from $k_{n+1,m}(\ell
)\left( m-k_{n+1,m}(\ell )\right) $ in the recurrence. If $\ell _{a}^{\prime
}+\ell _{b}^{\prime }<m^{n}$ then, by Lemma 6, $k_{n+1,m}(\ell _{a}+\ell
_{b})=k_{n+1,m}(\ell _{a})+k_{n+1,m}(\ell _{b})$ so

\ $\mathbf{I}$\ $=k_{n+1,m}(\ell _{a})\left( m-k_{n+1,m}(\ell _{a})\right)
+k_{n+1,m}(\ell _{b})\left( m-k_{n+1,m}(\ell _{b})\right) $

$\ \ \ \ \ -k_{n+1,m}(\ell _{a}+\ell _{b})\left( m-k_{n+1,m}(\ell _{a}+\ell
_{b})\right) $

\textbf{\ \ \ }$=k_{n+1,m}(\ell _{a})\left( m-k_{n+1,m}(\ell _{a})\right)
+k_{n+1,m}(\ell _{b})\left( m-k_{n+1,m}(\ell _{b})\right) $

$\ \ \ \ \ -\left( k_{n+1,m}(\ell _{a})+k_{n+1,m}(\ell _{b})\right) \left(
m-\left( k_{n+1,m}(\ell _{a})+k_{n+1,m}(\ell _{b})\right) \right) $

$\ =2k_{n+1,m}(\ell _{a})k_{n+1,m}(\ell _{b}).$

However, if $\ell _{a}^{\prime }+\ell _{b}^{\prime }\geq m^{n}$, then

$k_{n+1,m}(\ell _{a}+\ell _{b})=k_{n+1,m}(\ell _{a})+k_{n+1,m}(\ell _{b})+1$
so

$\mathbf{I=}$ $k_{n+1,m}(\ell _{a})\left( m-k_{n+1,m}(\ell _{a})\right)
+k_{n+1,m}(\ell _{b})\left( m-k_{n+1,m}(\ell _{b})\right) $

$\ \ \ \ \ -k_{n+1,m}(\ell _{a}+\ell _{b})\left( m-k_{n+1,m}(\ell _{a}+\ell
_{b})\right) $

\ \ $=k_{n+1,m}(\ell _{a})\left( m-k_{n+1,m}(\ell _{a})\right)
+k_{n+1,m}(\ell _{b})\left( m-k_{n+1,m}(\ell _{b})\right) $

$\ \ \ \ -\left( k_{n+1,m}(\ell _{a})+k_{n+1,m}(\ell _{b})+1\right) \left(
m-\left( k_{n+1,m}(\ell _{a})+k_{n+1,m}(\ell _{b})+1\right) \right) $

\ \ $=2\left( k_{n+1,m}(\ell _{a})k_{n+1,m}(\ell _{b})+k_{n+1,m}(\ell
_{a})+k_{n+1,m}(\ell _{b})\right) -m+1$.

$\mathbf{II}$ consists of the terms derived from $\left\vert \Theta
(Lex^{-1}\left( n,m;\ell ^{\prime }\right) )\right\vert $ in the recurrence
so $\ \ \ \ $

$\ \mathbf{II}=\left\vert \Theta (Lex^{-1}\left( n,m;\ell _{a}^{\prime
}\right) )\right\vert +\left\vert \Theta (Lex^{-1}\left( n,m;\ell
_{b}^{\prime }\right) )\right\vert $

$\ \ \ \ \ \ \ \ -\left\vert \Theta (Lex^{-1}\left( n,m;\left( \ell
_{a}+\ell _{b}\right) ^{\prime }\right) )\right\vert $

$\ \ =\left\vert \Theta (Lex^{-1}\left( n,m;\ell _{a}^{\prime }\right)
)\right\vert +\left\vert \Theta (Lex^{-1}\left( n,m;\ell _{b}^{\prime
}\right) )\right\vert $

$\ \ \ \ \ \ \ \ -\left\{ 
\begin{tabular}{ll}
$\left\vert \Theta (Lex^{-1}\left( n,m;\ell _{a}^{\prime }+\ell _{b}^{\prime
}\right) )\right\vert $ & if $\ell _{a}^{\prime }+\ell _{b}^{\prime }<m^{n}$
\\ 
$\left\vert \Theta (Lex^{-1}n,m;\ell _{a}^{\prime }+\ell _{b}^{\prime
}-m^{n})\right\vert $ & if $\ell _{a}^{\prime }+\ell _{b}^{\prime }\geq
m^{n} $%
\end{tabular}%
\right. $

$\ \ =\Sigma _{n,m}\left( \ell _{a}^{\prime },\ell _{b}^{\prime }\right)
+\sigma _{n,m}\left( \ell _{a}^{\prime },\ell _{b}^{\prime }\right) $ (by
the definition of $\Sigma _{n,m}$,

$\geq 0+\sigma _{n,m}\left( \ell _{a}^{\prime },\ell _{b}^{\prime }\right) $
(by the inductive hypothesis)

$=\left\{ 
\begin{tabular}{ll}
$q_{n,m}(\ell _{b}^{\prime })+\left( q_{n,m}(\ell _{a}^{\prime }+\ell
_{b}^{\prime })-q_{n,m}(\ell _{a}^{\prime })\right) $ & if $\ell
_{a}^{\prime }+\ell _{b}^{\prime }<m^{n}$ \\ 
$q_{n,m}(\ell _{b}^{\prime })-q_{n,m}(\ell _{a}^{\prime }+\ell _{b}^{\prime
}-m^{n})+\left( m-q_{n,m}(\ell _{a}^{\prime })\right) $ & if $\ell
_{a}^{\prime }+\ell _{b}^{\prime }\geq m^{n}$%
\end{tabular}%
\right. $

\ \ \ \ \ \ \ \ \ \ \ \ \ (by Definition 5).

$\mathbf{III}$ consists of the terms derived from

$\ \ \ \ \ \ \ \ \ \left\{ 
\begin{tabular}{ll}
$-q_{n-1,m}\left( \ell ^{\prime }\right) $ & if $q_{n-1,m}\left( \ell
^{\prime }\right) \leq k_{n,m}(\ell )$ \\ 
$q_{n-1,m}\left( \ell ^{\prime }\right) -2k_{n,m}(\ell )$ & if $%
q_{n-1,m}\left( \ell ^{\prime }\right) >k_{n,m}(\ell )$%
\end{tabular}%
\right. $

in the recurrence, so

$\mathbf{III}=\left\{ 
\begin{tabular}{ll}
$-q_{n,m}\left( \ell _{a}^{\prime }\right) $ & if $q_{n,m}\left( \ell
_{a}^{\prime }\right) \leq k_{n+1,m}(\ell _{a})$ \\ 
$q_{n,m}\left( \ell _{a}^{\prime }\right) -2k_{n+1,m}(\ell _{a})$ & if $%
q_{n,m}\left( \ell _{a}^{\prime }\right) >k_{n+1,m}(\ell _{a})$%
\end{tabular}%
\right. $

$\ \ \ \ \ +\left\{ 
\begin{tabular}{ll}
$-q_{n,m}\left( \ell _{b}^{\prime }\right) $ & if $q_{n,m}\left( \ell
_{b}^{\prime }\right) \leq k_{n+1,m}(\ell _{b})$ \\ 
$q_{n,m}\left( \ell _{b}^{\prime }\right) -2k_{n+1,m}(\ell _{b})$ & if $%
q_{n,m}\left( \ell _{b}^{\prime }\right) >k_{n+1,m}(\ell _{b})$%
\end{tabular}%
\right. $

$\ \ \ \ \ -\left\{ 
\begin{tabular}{l}
$-q_{n,m}\left( \left( \ell _{a}+\ell _{b}\right) ^{\prime }\right) $ \\ 
$\ \ \ \ \ \ \ \text{if }q_{n,m}\left( \left( \ell _{a}+\ell _{b}\right)
^{\prime }\right) \leq k_{n+1,m}(\ell _{a}+\ell _{b})$ \\ 
$q_{n,m}\left( \left( \ell _{a}+\ell _{b}\right) ^{\prime }\right)
-2k_{n+1,m}(\ell _{a}+\ell _{b})$ \\ 
\ \ \ \ \ \ \ if $q_{n,m}\left( \left( \ell _{a}+\ell _{b}\right) ^{\prime
}\right) >k_{n+1,m}(\ell _{a}+\ell _{b})$%
\end{tabular}%
\right. $,

$\mathbf{IV}=-\sigma _{n+1,m}\left( \ell _{a},\ell _{b}\right) $

$=-\left( q_{n+1,m}\left( \ell _{b}\right) +\left( q_{n+1,m}\left( \ell
_{a}+\ell _{b}\right) -q_{n+1,m}\left( \ell _{a}\right) \right) \right) $

$\geq -\left( q_{n+1,m}\left( \ell _{b}\right) +\left( q_{n+1,m}\left( \ell
_{a}\right) +q_{n+1,m}\left( \ell _{b}\right) \right) -q_{n+1,m}\left( \ell
_{a}\right) \right) $

\ \ \ \ \ \ \ \ \ \ \ (by Lemma 7)

$=-2q_{n+1,m}\left( \ell _{b}\right) $

\begin{remark}
1. $\mathbf{I}$ \& $\mathbf{II}$ are $\geq 0$, $\mathbf{III}$ may be $\leq 0$
or $>0$ and $\mathbf{IV\leq 0}$.

2. The magnitudes of terms in $\mathbf{I,II,III},\mathbf{IV}$ are mediated
by the Case inequalities, Remarks 1-5 and Lemmas 1-8. In each case we must
show that the negativity of $\mathbf{III},\mathbf{IV}$ is balanced out by
the positivity of $\mathbf{I,II}$.

3. There are 4 binary conditionals in the definition of $\Sigma
_{n+1,m}\left( \ell _{a},\ell _{b}\right) $:

\qquad\ \ i. $\ell _{a}^{\prime }+\ell _{b}^{\prime }<m^{n}$ or $\ell
_{a}^{\prime }+\ell _{b}^{\prime }\geq m^{n},$

\qquad\ ii. $q_{n,m}\left( \ell _{a}^{\prime }\right) \leq k_{n+1,m}(\ell
_{a})$ or $q_{n,m}\left( \ell _{a}^{\prime }\right) >k_{n+1,m}(\ell _{a}),$

\qquad iii. $q_{n,m}\left( \ell _{b}^{\prime }\right) \leq k_{n+1,m}(\ell
_{b})$ or $q_{n,m}\left( \ell _{b}^{\prime }\right) >k_{n+1,m}(\ell _{b}),$

\qquad iv. $q_{n,m}\left( \left( \ell _{a}+\ell _{b}\right) ^{\prime
}\right) \leq k_{n+1,m}(\left( \ell _{a}+\ell _{b}\right) ^{\prime })$ or $%
q_{n,m}\left( \left( \ell _{a}+\ell _{b}\right) ^{\prime }\right)
>k_{n+1,m}(\left( \ell _{a}+\ell _{b}\right) ^{\prime }).$

These give rise to 16 cases. In each case we must show that the negativity
of $\mathbf{III},\mathbf{IV}$ is balanced out by the positivity of $\mathbf{%
I,II}$. We associate these 16 cases with the binary 4-tuples of 1s (first
case) and 2s (second case) and consider them in lexicographic order:

Case 1111: $\ell _{a}^{\prime }+\ell _{b}^{\prime }<m^{n}$, $q_{n,m}\left(
\ell _{a}^{\prime }\right) \leq k_{n+1,m}(\ell _{a})$, $q_{n,m}\left( \ell
_{b}^{\prime }\right) \leq k_{n+1,m}(\ell _{b})$, $q_{n,m}\left( \left( \ell
_{a}+\ell _{b}\right) ^{\prime }\right) \leq k_{n+1,m}(\left( \ell _{a}+\ell
_{b}\right) )$.
\end{remark}
\end{description}

\qquad Then 
\begin{eqnarray*}
\Sigma _{n+1,m}\left( \ell _{a},\ell _{b}\right) &=&\text{I + II + III + IV}
\\
&\geq &2k_{n+1,m}(\ell _{a})k_{n+1,m}(\ell _{b}) \\
&&+q_{n,m}\left( \ell _{b}^{\prime }\right) +q_{n,m}\left( \ell _{a}^{\prime
}+\ell _{b}^{\prime }\right) -q_{n,m}\left( \ell _{a}^{\prime }\right) \text{
} \\
&&\text{ }-q_{n,m}\left( \ell _{a}^{\prime }\right) -q_{n,m}\left( \ell
_{b}^{\prime }\right) +q_{n,m}\left( \ell _{a}^{\prime }+\ell _{b}^{\prime
}\right) \\
&&-2q_{n+1,m}\left( \ell _{b}\right) , \\
&\geq &2k_{n+1,m}(\ell _{a})k_{n+1,m}(\ell _{b}) \\
&&+2q_{n,m}\left( \ell _{a}^{\prime }+\ell _{b}^{\prime }\right)
-2q_{n,m}\left( \ell _{a}^{\prime }\right) \\
&&-2\left( k_{n+1,m}\left( \ell _{b}\right) +1\right) \text{ (by Remark 3)}
\\
&\geq &2k_{n+1,m}(\ell _{a})k_{n+1,m}(\ell _{b})-2\left( k_{n+1,m}\left(
\ell _{b}\right) +1\right) \\
&&\text{ \ \ \ \ \ \ \ \ \ (by Remark 5, }q_{n,m}\left( \ell _{a}^{\prime
}+\ell _{b}^{\prime }\right) -q_{n,m}\left( \ell _{a}^{\prime }\right) \geq 0%
\text{)} \\
&=&2\left( k_{n+1,m}(\ell _{a})k_{n+1,m}(\ell _{b})-k_{n+1,m}\left( \ell
_{b}\right) -1\right) \\
&\geq &0\text{ unless }k_{n+1,m}\left( \ell _{b}\right) =0\text{ or }%
k_{n+1,m}(\ell _{a})=1=k_{n+1,m}(\ell _{b}).
\end{eqnarray*}%
But if $k_{n+1,m}\left( \ell _{b}\right) =0$ then by Case 1111.iii, $%
q_{n,m}\left( \ell _{b}^{\prime }\right) \leq k_{n+1,m}\left( \ell
_{b}\right) =0$ so $q_{n,m}\left( \ell _{b}^{\prime }\right) =0$ which
implies that $\ell _{b}^{\prime }=0.$ Therefore $\ell _{b}=k_{n+1,m}\left(
\ell _{b}\right) m^{n}+\ell _{b}^{\prime }=0$, contradicting our assumption
that $\ell _{b}>0$. And if $k_{n+1,m}(\ell _{a})=1=k_{n+1,m}(\ell _{b})$
then by the inequalities above, 
\begin{eqnarray*}
\Sigma _{n+1,m}\left( \ell _{a},\ell _{b}\right) &\geq &2\cdot 1\cdot 1 \\
&&+2q_{n,m}\left( \ell _{a}^{\prime }+\ell _{b}^{\prime }\right)
-2q_{n,m}\left( \ell _{a}^{\prime }\right) \\
&&-2q_{n+1,m}\left( \ell _{b}\right) ,\text{ }
\end{eqnarray*}

Since $k_{n+1,m}(\ell _{b})=1,$ $m^{n}\leq \ell _{b}<2m^{n}$. If $\ell
_{b}\leq \frac{m^{n+1}-1}{m-1}$, then $q_{n+1,m}\left( \ell _{b}\right) =1.$
And if $\ell _{b}^{\prime }=0$ then $q_{n,m}\left( \ell _{a}^{\prime }+\ell
_{b}^{\prime }\right) =q_{n,m}\left( \ell _{a}^{\prime }\right) $ so 
\begin{equation*}
\Sigma _{n+1,m}\left( \ell _{a},\ell _{b}\right) \geq 2+2\cdot 0-2\cdot 1=0%
\text{.}
\end{equation*}%
However, if $\ell _{b}^{\prime }>0$, then $q_{n,m}\left( \ell _{b}^{\prime
}\right) \geq 1$ so%
\begin{eqnarray*}
\Sigma _{n+1,m}\left( \ell _{a},\ell _{b}\right) &\geq &2 \\
&&+2\left( q_{n,m}\left( \ell _{a}^{\prime }+\ell _{b}^{\prime }\right)
-q_{n,m}\left( \ell _{a}^{\prime }\right) \right) \\
&&-2q_{n+1,m}\left( \ell _{b}\right) ,\text{ } \\
&\geq &2 \\
&&+2\left( \left( q_{n,m}\left( \ell _{a}^{\prime }\right) +q_{n,m}\left(
\ell _{b}^{\prime }\right) -1\right) -q_{n,m}\left( \ell _{a}^{\prime
}\right) \right) \text{ (by Lemma 7)} \\
&&-2\cdot 1, \\
&=&2\left( q_{n,m}\left( \ell _{b}^{\prime }\right) -1\right) \\
&\geq &0
\end{eqnarray*}

And if $\ell _{b}>\frac{m^{n+1}-1}{m-1}=m^{n}+\frac{m^{n}-1}{m-1}$ then $%
\ell _{b}^{\prime }>\frac{m^{n}-1}{m-1}$ so $q_{n+1,m}\left( \ell
_{b}\right) =2$ and $q_{n,m}\left( \ell _{b}^{\prime }\right) \geq 2$.
Therefore%
\begin{eqnarray*}
\Sigma _{n+1,m}\left( \ell _{a},\ell _{b}\right) &\geq &2 \\
&&+2\left( q_{n,m}\left( \ell _{a}^{\prime }+\ell _{b}^{\prime }\right)
-q_{n,m}\left( \ell _{a}^{\prime }\right) \right) \\
&&-2q_{n+1,m}\left( \ell _{b}\right) ,\text{ } \\
&\geq &2 \\
&&+2\left( \left( q_{n,m}\left( \ell _{a}^{\prime }\right) +q_{n,m}\left(
\ell _{b}^{\prime }\right) -1\right) -q_{n,m}\left( \ell _{a}^{\prime
}\right) \right) \text{ (by Lemma 7)} \\
&&-2\cdot 2, \\
&=&2\left( q_{n,m}\left( \ell _{b}^{\prime }\right) -2\right) \\
&\geq &0
\end{eqnarray*}%
So in any subcase of Case 1111, $\Sigma _{n+1,m}\left( \ell _{a},\ell
_{b}\right) \geq 0$.

\medskip \bigskip

Case 1112: $\ell _{a}^{\prime }+\ell _{b}^{\prime }<m^{n}$, $q_{n,m}\left(
\ell _{a}^{\prime }\right) \leq k_{n+1,m}(\ell _{a})$, $q_{n,m}\left( \ell
_{b}^{\prime }\right) \leq k_{n+1,m}(\ell _{b})$, $q_{n,m}\left( \left( \ell
_{a}+\ell _{b}\right) ^{\prime }\right) >k_{n+1,m}(\left( \ell _{a}+\ell
_{b}\right) )$.

\qquad Then 
\begin{eqnarray*}
\Sigma _{n+1,m}\left( \ell _{a},\ell _{b}\right) &=&\text{I + II + III + IV}
\\
&\geq &2k_{n+1,m}(\ell _{a})k_{n+1,m}(\ell _{b}) \\
&&+q_{n,m}\left( \ell _{b}^{\prime }\right) +q_{n,m}\left( \ell _{a}^{\prime
}+\ell _{b}^{\prime }\right) -q_{n,m}\left( \ell _{a}^{\prime }\right) \\
&&-q_{n,m}\left( \ell _{a}^{\prime }\right) -q_{n,m}\left( \ell _{b}^{\prime
}\right) -(q_{n,m}\left( \ell _{a}^{\prime }+\ell _{b}^{\prime }\right)
-2k_{n+1,m}(\ell _{a}+\ell _{b})) \\
&&-2q_{n+1,m}\left( \ell _{b}\right) \text{,} \\
&\geq &2k_{n+1,m}(\ell _{a})k_{n+1,m}(\ell _{b}) \\
&&-2q_{n,m}\left( \ell _{a}^{\prime }\right) +2k_{n+1,m}(\ell _{a}+\ell _{b})
\\
&&-2\left( k_{n+1,m}\left( \ell _{b}\right) +1\right) \text{ \ \ \ (by
Remark 3)} \\
&\geq &2k_{n+1,m}(\ell _{a})k_{n+1,m}(\ell _{b}) \\
&&-2k_{n+1,m}(\ell _{a})+2\left( k_{n+1,m}(\ell _{a})+k_{n+1,m}(\ell
_{b})\right) \\
&&\text{ \ \ \ \ \ \ \ \ \ \ \ \ \ \ \ (by Case 1112.ii \& Lemma 6)} \\
&&-2\left( k_{n+1,m}\left( \ell _{b}\right) +1\right) , \\
&=&2\left( k_{n+1,m}(\ell _{a})k_{n+1,m}(\ell _{b})-1\right) \\
&\geq &0\text{ unless }k_{n+1,m}(\ell _{b})=0\text{ but that is impossible
for the} \\
&&\text{ \ \ \ \ \ \ \ \ \ \ \ \ same reason as in Case 1111.}
\end{eqnarray*}

\newpage

Case 1121: $\ell _{a}^{\prime }+\ell _{b}^{\prime }<m^{n}$, $q_{n,m}\left(
\ell _{a}^{\prime }\right) \leq k_{n+1,m}(\ell _{a})$, $q_{n,m}\left( \ell
_{b}^{\prime }\right) >k_{n+1,m}(\ell _{b})$, $q_{n,m}\left( \left( \ell
_{a}+\ell _{b}\right) ^{\prime }\right) \leq k_{n+1,m}(\ell _{a}+\ell _{b})$.

\qquad Then%
\begin{eqnarray*}
\Sigma _{n+1,m}\left( \ell _{a},\ell _{b}\right) &=&\text{I + II + III + IV}
\\
&\geq &2k_{n+1,m}(\ell _{a})k_{n+1,m}(\ell _{b}) \\
&&+q_{n,m}\left( \ell _{b}^{\prime }\right) +q_{n,m}\left( \ell _{a}^{\prime
}+\ell _{b}^{\prime }\right) -q_{n,m}\left( \ell _{a}^{\prime }\right) \\
&&+\left( -q_{n,m}\left( \ell _{a}^{\prime }\right) \right) +\left(
q_{n,m}\left( \ell _{b}^{\prime }\right) -2k_{n+1,m}(\ell _{b}\right)
-(-q_{n,m}\left( \ell _{a}^{\prime }+\ell _{b}^{\prime }\right) \\
&&-2q_{n+1,m}\left( \ell _{b}\right) , \\
&=&2k_{n+1,m}(\ell _{a})k_{n+1,m}(\ell _{b}) \\
&&+2q_{n,m}\left( \ell _{b}^{\prime }\right) -2q_{n,m}\left( \ell
_{a}^{\prime }\right) +2q_{n,m}\left( \ell _{a}^{\prime }+\ell _{b}^{\prime
}\right) \\
&&-2k_{n+1,m}\left( \ell _{b}\right) \\
&&-2\left( k_{n+1,m}\left( \ell _{b}\right) +1\right) \text{ (by Remark 3),}
\\
&\geq &2k_{n+1,m}(\ell _{a})k_{n+1,m}(\ell _{b}) \\
&&+2q_{n,m}\left( \ell _{b}^{\prime }\right) -2q_{n,m}\left( \ell
_{a}^{\prime }\right) +2\left( q_{n,m}\left( \ell _{a}^{\prime }\right)
+q_{n,m}\left( \ell _{b}^{\prime }\right) -1\right) \\
&&\text{ \ \ \ \ \ \ \ \ \ \ \ \ \ \ \ \ \ (by Lemma 7)} \\
&&-2\left( 2k_{n+1,m}\left( \ell _{b}\right) +1\right) \\
&=&2k_{n+1,m}(\ell _{a})k_{n+1,m}(\ell _{b}) \\
&&+4q_{n,m}\left( \ell _{b}^{\prime }\right) \\
&&-4\left( k_{n+1,m}\left( \ell _{b}\right) +1\right) \\
&\geq &2k_{n+1,m}(\ell _{a})k_{n+1,m}(\ell _{b})\text{ (by Case 1121.iii)} \\
&\geq &0.
\end{eqnarray*}

\bigskip

Case 1122: $\ell _{a}^{\prime }+\ell _{b}^{\prime }<m^{n}$, $q_{n,m}\left(
\ell _{a}^{\prime }\right) \leq k_{n+1,m}(\ell _{a})$, $q_{n,m}\left( \ell
_{b}^{\prime }\right) >k_{n+1,m}(\ell _{b})$, $q_{n,m}\left( \left( \ell
_{a}+\ell _{b}\right) ^{\prime }\right) >k_{n+1,m}(\left( \ell _{a}+\ell
_{b}\right) )$.

\qquad Then%
\begin{eqnarray*}
\Sigma _{n+1,m}\left( \ell _{a},\ell _{b}\right) &=&\text{I + II + III + IV}
\\
&\geq &2k_{n+1,m}(\ell _{a})k_{n+1,m}(\ell _{b}) \\
&&+q_{n,m}\left( \ell _{b}^{\prime }\right) +q_{n,m}\left( \ell _{a}^{\prime
}+\ell _{b}^{\prime }\right) -q_{n,m}\left( \ell _{a}^{\prime }\right) \\
&&+\left( -q_{n,m}\left( \ell _{a}^{\prime }\right) \right) +\left(
q_{n,m}\left( \ell _{b}^{\prime }\right) -2k_{n+1,m}(\ell _{b})\right) \\
&&-(q_{n,m}\left( \ell _{a}^{\prime }+\ell _{b}^{\prime }\right)
-2k_{n+1,m}(\ell _{a}+\ell _{b})) \\
&&-2q_{n+1,m}\left( \ell _{b}\right) , \\
&\geq &2k_{n+1,m}(\ell _{a})k_{n+1,m}(\ell _{b}) \\
&&+2q_{n,m}\left( \ell _{b}^{\prime }\right) -2q_{n,m}\left( \ell
_{a}^{\prime }\right) \\
&&-2k_{n+1,m}(\ell _{b})+2k_{n+1,m}(\ell _{a}+\ell _{b}) \\
&&-2\left( k_{n+1,m}\left( \ell _{b}\right) +1\right) \text{ (by Remark 3),}
\\
&\geq &2k_{n+1,m}(\ell _{a})k_{n+1,m}(\ell _{b})-2k_{n+1,m}\left( \ell
_{b}\right) -2 \\
&&+2\left( k_{n+1,m}\left( \ell _{b}\right) +1\right) -2k_{n+1,m}(\ell _{a})%
\text{ (by Cases 1122.ii \& .iii)} \\
&&-2k_{n+1,m}(\ell _{b})+2\left( k_{n+1,m}(\ell _{a})+k_{n+1,m}(\ell
_{b})\right) \text{ (by Lemma 6)} \\
&=&2k_{n+1,m}(\ell _{a})k_{n+1,m}(\ell _{b}) \\
&\geq &0\text{.}
\end{eqnarray*}

Case 1211: $\ell _{a}^{\prime }+\ell _{b}^{\prime }<m^{n}$, $q_{n,m}\left(
\ell _{a}^{\prime }\right) >k_{n+1,m}(\ell _{a})$, $q_{n,m}\left( \ell
_{b}^{\prime }\right) \leq k_{n+1,m}(\ell _{b})$, $q_{n,m}\left( \left( \ell
_{a}+\ell _{b}\right) ^{\prime }\right) \leq k_{n+1,m}(\left( \ell _{a}+\ell
_{b}\right) )$.

\qquad Then%
\begin{eqnarray*}
\Sigma _{n+1,m}\left( \ell _{a},\ell _{b}\right) &=&\text{I + II + III + IV}
\\
&\geq &2k_{n+1,m}(\ell _{a})k_{n+1,m}(\ell _{b}) \\
&&+q_{n,m}\left( \ell _{b}^{\prime }\right) +q_{n,m}\left( \ell _{a}^{\prime
}+\ell _{b}^{\prime }\right) -q_{n,m}\left( \ell _{a}^{\prime }\right) \\
&&+\left( q_{n,m}\left( \ell _{a}^{\prime }\right) -2k_{n+1,m}(\ell
_{a})\right) +\left( -q_{n,m}\left( \ell _{b}^{\prime }\right) \right)
-(-q_{n,m}\left( \ell _{a}^{\prime }+\ell _{b}^{\prime }\right) ) \\
&&-2q_{n+1,m}\left( \ell _{b}\right) , \\
&\geq &2k_{n+1,m}(\ell _{a})k_{n+1,m}(\ell _{b}) \\
&&+2q_{n,m}\left( \ell _{a}^{\prime }+\ell _{b}^{\prime }\right)
-2k_{n+1,m}(\ell _{a}) \\
&&-2\left( k_{n+1,m}\left( \ell _{b}\right) +1\right) \text{ (by Remark 3)}
\\
&\geq &2k_{n+1,m}(\ell _{a})k_{n+1,m}(\ell _{b}) \\
&&2q_{n,m}\left( \ell _{a}^{\prime }\right) -2k_{n+1,m}(\ell _{a})\text{ (by
Remark 3)} \\
&&-2\left( k_{n+1,m}\left( \ell _{b}\right) +1\right) , \\
&\geq &2k_{n+1,m}(\ell _{a})k_{n+1,m}(\ell _{b}) \\
&&2\left( k_{n+1,m}\left( \ell _{a}\right) +1\right) -2k_{n+1,m}(\ell _{a})%
\text{ (by Case 1211.ii)} \\
&&-2\left( k_{n+1,m}\left( \ell _{b}\right) +1\right) \text{,} \\
&=&2\left( k_{n+1,m}(\ell _{a})k_{n+1,m}(\ell _{b})-k_{n+1,m}\left( \ell
_{b}\right) \right) \text{,} \\
&\geq &0\text{ since }k_{n+1,m}\left( \ell _{a}\right) \geq k_{n+1,m}(\ell
_{b})\geq 0\text{.}
\end{eqnarray*}

\bigskip

Case 1212: $\ell _{a}^{\prime }+\ell _{b}^{\prime }<m^{n}$, $q_{n,m}\left(
\ell _{a}^{\prime }\right) >k_{n+1,m}(\ell _{a})$, $q_{n,m}\left( \ell
_{b}^{\prime }\right) \leq k_{n+1,m}(\ell _{b})$, $q_{n,m}\left( \left( \ell
_{a}+\ell _{b}\right) ^{\prime }\right) >k_{n+1,m}(\left( \ell _{a}+\ell
_{b}\right) )$.

\qquad Then%
\begin{eqnarray*}
\Sigma _{n+1,m}\left( \ell _{a},\ell _{b}\right) &=&\text{I + II + III + IV}
\\
&\geq &2k_{n+1,m}(\ell _{a})k_{n+1,m}(\ell _{b}) \\
&&+q_{n,m}\left( \ell _{b}^{\prime }\right) +q_{n,m}\left( \ell _{a}^{\prime
}+\ell _{b}^{\prime }\right) -q_{n,m}\left( \ell _{a}^{\prime }\right) \\
&&+\left( q_{n,m}\left( \ell _{a}^{\prime }\right) -2k_{n+1,m}(\ell
_{a})\right) +\left( -q_{n,m}\left( \ell _{b}^{\prime }\right) \right) \\
&&-(q_{n,m}\left( \ell _{a}^{\prime }+\ell _{b}^{\prime }\right)
-2k_{n+1,m}(\ell _{a}+\ell _{b})) \\
&&-2q_{n+1,m}\left( \ell _{b}\right) , \\
&=&2k_{n+1,m}(\ell _{a})k_{n+1,m}(\ell _{b}) \\
&&-2k_{n+1,m}(\ell _{a})+2k_{n+1,m}(\ell _{a}+\ell _{b}) \\
&&-2\left( k_{n+1,m}\left( \ell _{b}\right) +1\right) \text{ (by Remark 3)}
\\
&\geq &2k_{n+1,m}(\ell _{a})k_{n+1,m}(\ell _{b}) \\
&&-2k_{n+1,m}(\ell _{a})+2\left( k_{n+1,m}(\ell _{a})+k_{n+1,m}(\ell
_{b})\right) \text{ (by Lemma 6)} \\
&&-2\left( k_{n+1,m}\left( \ell _{b}\right) +1\right) , \\
&\geq &2\left( k_{n+1,m}(\ell _{a})k_{n+1,m}(\ell _{b})-1\right) \\
&\geq &0\text{ unless }k_{n+1,m}(\ell _{b})=0\text{.}
\end{eqnarray*}

But $k_{n+1,m}\left( \ell _{b}\right) =0$ leads to a contradiction as it did
in Case 1111.

\newpage

Case 1221: $\ell _{a}^{\prime }+\ell _{b}^{\prime }<m^{n}$, $q_{n,m}\left(
\ell _{a}^{\prime }\right) >k_{n+1,m}(\ell _{a})$, $q_{n,m}\left( \ell
_{b}^{\prime }\right) >k_{n+1,m}(\ell _{b})$, $q_{n,m}\left( \left( \ell
_{a}+\ell _{b}\right) ^{\prime }\right) \leq k_{n+1,m}(\left( \ell _{a}+\ell
_{b}\right) )$.

\qquad Then%
\begin{eqnarray*}
\Sigma _{n+1,m}\left( \ell _{a},\ell _{b}\right) &=&\text{I + II + III + IV}
\\
&\geq &2k_{n+1,m}(\ell _{a})k_{n+1,m}(\ell _{b}) \\
&&+q_{n,m}\left( \ell _{b}^{\prime }\right) +q_{n,m}\left( \ell _{a}^{\prime
}+\ell _{b}^{\prime }\right) -q_{n,m}\left( \ell _{a}^{\prime }\right) \\
&&+\left( q_{n,m}\left( \ell _{a}^{\prime }\right) -2k_{n+1,m}(\ell
_{a})\right) +\left( q_{n,m}\left( \ell _{b}^{\prime }\right)
-2k_{n+1,m}(\ell _{b})\right) \\
&&-(-q_{n,m}\left( \ell _{a}^{\prime }+\ell _{b}^{\prime }\right) ) \\
&&-2q_{n+1,m}\left( \ell _{b}\right) , \\
&=&2k_{n+1,m}(\ell _{a})k_{n+1,m}(\ell _{b}) \\
&&+2q_{n,m}\left( \ell _{b}^{\prime }\right) +2q_{n,m}\left( \ell
_{a}^{\prime }+\ell _{b}^{\prime }\right) \\
&&-2k_{n+1,m}(\ell _{a})-2k_{n+1,m}(\ell _{b}) \\
&&-2\left( k_{n+1,m}\left( \ell _{b}\right) +1\right) \text{ (by Remark 3)}
\\
&\geq &2k_{n+1,m}(\ell _{a})k_{n+1,m}(\ell _{b}) \\
&&+2q_{n,m}\left( \ell _{b}^{\prime }\right) +2\left( q_{n,m}\left( \ell
_{a}^{\prime }\right) +q_{n,m}\left( \ell _{b}^{\prime }\right) -1\right) 
\text{ (by Lemma 7)} \\
&&-2k_{n+1,m}(\ell _{a})-2k_{n+1,m}(\ell _{b}) \\
&&-2\left( k_{n+1,m}\left( \ell _{b}\right) +1\right) , \\
&\geq &2k_{n+1,m}(\ell _{a})k_{n+1,m}(\ell _{b}) \\
&&+2\left( k_{n+1,m}\left( \ell _{b}\right) +1\right) +2\left( \left(
k_{n+1,m}\left( \ell _{a}\right) +1\right) +k_{n+1,m}\left( \ell _{b}\right)
+1-1\right) \\
&&\text{ \ \ \ \ \ \ \ \ \ \ \ \ \ \ \ \ \ \ \ \ (by Cases 1221.ii \& .iii)}
\\
&&-2k_{n+1,m}(\ell _{a})-2k_{n+1,m}(\ell _{b}) \\
&&-2\left( k_{n+1,m}(\ell _{b})+1\right) , \\
&=&2\left( k_{n+1,m}(\ell _{a})k_{n+1,m}(\ell _{b})+1\right) \\
&>&0
\end{eqnarray*}

\newpage

Case 1222: $\ell _{a}^{\prime }+\ell _{b}^{\prime }<m^{n}$, $q_{n,m}\left(
\ell _{a}^{\prime }\right) >k_{n+1,m}(\ell _{a})$, $q_{n,m}\left( \ell
_{b}^{\prime }\right) >k_{n+1,m}(\ell _{b})$, $q_{n,m}\left( \left( \ell
_{a}+\ell _{b}\right) ^{\prime }\right) >k_{n+1,m}(\left( \ell _{a}+\ell
_{b}\right) )$.

\qquad Then%
\begin{eqnarray*}
\Sigma _{n+1,m}\left( \ell _{a},\ell _{b}\right) &=&\text{I + II + III + IV}
\\
&\geq &2k_{n+1,m}(\ell _{a})k_{n+1,m}(\ell _{b}) \\
&&+q_{n,m}\left( \ell _{b}^{\prime }\right) +q_{n,m}\left( \ell _{a}^{\prime
}+\ell _{b}^{\prime }\right) -q_{n,m}\left( \ell _{a}^{\prime }\right) \\
&&+\left( q_{n,m}\left( \ell _{a}^{\prime }\right) -2k_{n+1,m}(\ell
_{a})\right) +\left( q_{n,m}\left( \ell _{b}^{\prime }\right)
-2k_{n+1,m}(\ell _{b})\right) \\
&&-(q_{n,m}\left( \ell _{a}^{\prime }+\ell _{b}^{\prime }\right)
-2k_{n+1,m}(\ell _{a}+\ell _{b})) \\
&&-2q_{n+1,m}\left( \ell _{b}\right) , \\
&\geq &2k_{n+1,m}(\ell _{a})k_{n+1,m}(\ell _{b}) \\
&&+2q_{n,m}\left( \ell _{b}^{\prime }\right) \\
&&-2k_{n+1,m}(\ell _{a})-2k_{n+1,m}(\ell _{b}) \\
&&+2k_{n+1,m}(\ell _{a}+\ell _{b})) \\
&&-2\left( k_{n+1,m}\left( \ell _{b}\right) +1\right) ,\text{ (by Remark 3)}
\\
&\geq &2k_{n+1,m}(\ell _{a})k_{n+1,m}(\ell _{b}) \\
&&+2\left( k_{n+1,m}(\ell _{b})+1\right) \text{ (by Case 1222.iii)} \\
&&-2k_{n+1,m}(\ell _{a})-2k_{n+1,m}(\ell _{b}) \\
&&+2\left( k_{n+1,m}(\ell _{a})+k_{n+1,m}(\ell _{b})\right) \text{ (by Lemma
6)} \\
&&-2\left( k_{n+1,m}\left( \ell _{b}\right) +1\right) , \\
&=&2k_{n+1,m}(\ell _{a})k_{n+1,m}(\ell _{b}) \\
&\geq &0\text{.}
\end{eqnarray*}

\bigskip

Case 2111: $\ell _{a}^{\prime }+\ell _{b}^{\prime }\geq m^{n}$, $%
q_{n,m}\left( \ell _{a}^{\prime }\right) \leq k_{n+1,m}(\ell _{a})$, $%
q_{n,m}\left( \ell _{b}^{\prime }\right) \leq k_{n+1,m}(\ell _{b})$, $%
q_{n,m}\left( \left( \ell _{a}+\ell _{b}\right) ^{\prime }\right) \leq
k_{n+1,m}(\ell _{a}+\ell _{b})$.

\qquad Then%
\begin{eqnarray*}
\Sigma _{n+1,m}\left( \ell _{a},\ell _{b}\right) &=&\text{I + II + III + IV}
\\
&\geq &2\left( k_{n+1,m}(\ell _{a})k_{n+1,m}(\ell _{b})+k_{n+1,m}(\ell
_{a})+k_{n+1,m}(\ell _{b})\right) -m+1 \\
&&+q_{n,m}\left( \ell _{b}^{\prime }\right) -q_{n,m}\left( \ell _{a}^{\prime
}+\ell _{b}^{\prime }-m^{n}\right) +m-q_{n,m}\left( \ell _{a}^{\prime
}\right) \text{ } \\
&&+\left( -q_{n,m}\left( \ell _{a}^{\prime }\right) \right) +\left(
-q_{n,m}\left( \ell _{b}^{\prime }\right) \right) -\left( -q_{n,m}\left(
\ell _{a}^{\prime }+\ell _{b}^{\prime }-m^{n}\right) \right) \\
&&-2q_{n+1,m}\left( \ell _{b}\right) , \\
&\geq &2\left( k_{n+1,m}(\ell _{a})k_{n+1,m}(\ell _{b})+k_{n+1,m}(\ell
_{a})+k_{n+1,m}(\ell _{b})\right) +1 \\
&&-2q_{n,m}\left( \ell _{a}^{\prime }\right) \\
&&-2\left( k_{n+1,m}\left( \ell _{b}\right) +1\right) ,\text{ (by Remark 3)}
\\
&\geq &2\left( k_{n+1,m}(\ell _{a})k_{n+1,m}(\ell _{b})+k_{n+1,m}(\ell
_{a})+k_{n+1,m}(\ell _{b})\right) +1 \\
&&-2k_{n+1,m}\left( \ell _{a}\right) \text{ (by Case 2111.ii)} \\
&&-2\left( k_{n+1,m}\left( \ell _{b}\right) +1\right) , \\
&=&2k_{n+1,m}(\ell _{a})k_{n+1,m}(\ell _{b})-1 \\
&\geq &0\text{ unless }k_{n+1,m}\left( \ell _{b}\right) =0\text{.}
\end{eqnarray*}%
But $k_{n+1,m}\left( \ell _{b}\right) =0$ is impossible for the same reason
it was in Case 1111.

\bigskip \bigskip

Case 2112: $\ell _{a}^{\prime }+\ell _{b}^{\prime }\geq m^{n}$, $%
q_{n,m}\left( \ell _{a}^{\prime }\right) \leq k_{n+1,m}(\ell _{a})$, $%
q_{n,m}\left( \ell _{b}^{\prime }\right) \leq k_{n+1,m}(\ell _{b})$, $%
q_{n,m}\left( \left( \ell _{a}+\ell _{b}\right) ^{\prime }\right)
>k_{n+1,m}(\ell _{a}+\ell _{b})$.

\qquad Then%
\begin{eqnarray*}
\Sigma _{n+1,m}\left( \ell _{a},\ell _{b}\right) &=&\text{I + II + III + IV}
\\
&\geq &2\left( k_{n+1,m}(\ell _{a})k_{n+1,m}(\ell _{b})+k_{n+1,m}(\ell
_{a})+k_{n+1,m}(\ell _{b})\right) -m+1 \\
&&+q_{n,m}\left( \ell _{b}^{\prime }\right) -q_{n,m}\left( \ell _{a}^{\prime
}+\ell _{b}^{\prime }-m^{n}\right) +m-q_{n,m}\left( \ell _{a}^{\prime
}\right) \\
&&+\left( -q_{n,m}\left( \ell _{a}^{\prime }\right) \right) +\left(
-q_{n,m}\left( \ell _{b}^{\prime }\right) \right) \\
&&-\left( q_{n,m}\left( \ell _{a}^{\prime }+\ell _{b}^{\prime }-m^{n}\right)
-2k_{n+1,m}\left( \ell _{a}+\ell _{b}\right) \right) \\
&&-2q_{n+1,m}\left( \ell _{b}\right) , \\
&\geq &2\left( k_{n+1,m}(\ell _{a})k_{n+1,m}(\ell _{b})+k_{n+1,m}(\ell
_{a})+k_{n+1,m}(\ell _{b})\right) +1 \\
&&-2q_{n,m}\left( \ell _{a}^{\prime }+\ell _{b}^{\prime }-m^{n}\right)
-2q_{n,m}\left( \ell _{a}^{\prime }\right) \\
&&+2k_{n+1,m}\left( \ell _{a}+\ell _{b}\right) \\
&&-2\left( k_{n+1,m}\left( \ell _{b}\right) +1\right) \text{ (by Remark 3)}
\\
&\geq &2\left( k_{n+1,m}(\ell _{a})k_{n+1,m}(\ell _{b})+k_{n+1,m}(\ell
_{a})+k_{n+1,m}(\ell _{b})\right) +1 \\
&&-2\left( q_{n,m}\left( \ell _{a}^{\prime }\right) +q_{n,m}\left( \ell
_{b}^{\prime }\right) -m\right) -2q_{n,m}\left( \ell _{a}^{\prime }\right) 
\text{ (by Lemma 7)} \\
&&+2\left( k_{n+1,m}\left( \ell _{a}\right) +k_{n+1,m}\left( \ell
_{b}\right) \right) \text{ (by Lemma 6)} \\
&&-2k_{n+1,m}\left( \ell _{b}\right) -2, \\
&\geq &2k_{n+1,m}(\ell _{a})k_{n+1,m}(\ell )+2m-1\text{ (by Cases 2112.ii \&
.iii)} \\
&>&0\text{ (since }m\geq 2\text{).}
\end{eqnarray*}

Case 2121: $\ell _{a}^{\prime }+\ell _{b}^{\prime }\geq m^{n}$, $%
q_{n,m}\left( \ell _{a}^{\prime }\right) \leq k_{n+1,m}(\ell _{a})$, $%
q_{n,m}\left( \ell _{b}^{\prime }\right) >k_{n+1,m}(\ell _{b})$, $%
q_{n,m}\left( \left( \ell _{a}+\ell _{b}\right) ^{\prime }\right) \leq
k_{n+1,m}(\ell _{a}+\ell _{b})$.

\qquad Then%
\begin{eqnarray*}
\Sigma _{n+1,m}\left( \ell _{a},\ell _{b}\right) &=&\text{I + II + III + IV}
\\
&\geq &2\left( k_{n+1,m}(\ell _{a})k_{n+1,m}(\ell _{b})+k_{n+1,m}(\ell
_{a})+k_{n+1,m}(\ell _{b})\right) -m+1 \\
&&+q_{n,m}\left( \ell _{b}^{\prime }\right) -q_{n,m}\left( \ell _{a}^{\prime
}+\ell _{b}^{\prime }-m^{n}\right) +m-q_{n,m}\left( \ell _{a}^{\prime
}\right) \\
&&+\left( -q_{n,m}\left( \ell _{a}^{\prime }\right) \right) +\left(
q_{n,m}\left( \ell _{b}^{\prime }\right) -2k_{n+1,m}\left( \ell _{b}\right)
\right) -\left( -q_{n,m}\left( \ell _{a}^{\prime }+\ell _{b}^{\prime
}-m^{n}\right) \right) \\
&&-2q_{n+1,m}\left( \ell _{b}\right) , \\
&\geq &2\left( k_{n+1,m}(\ell _{a})k_{n+1,m}(\ell _{b})+k_{n+1,m}(\ell
_{a})\right) +1 \\
&&+2q_{n,m}\left( \ell _{b}^{\prime }\right) -2q_{n,m}\left( \ell
_{a}^{\prime }\right) \\
&&-2\left( k_{n+1,m}\left( \ell _{b}\right) +1\right) \text{ (by Remark 3),}
\\
&\geq &2\left( k_{n+1,m}(\ell _{a})k_{n+1,m}(\ell _{b})+k_{n+1,m}(\ell
_{a})\right) -1 \\
&&+2\left( k_{n+1,m}\left( \ell _{b}\right) +1\right) -2k_{n+1,m}\left( \ell
_{a}\right) \text{ (by Cases 2121.ii \& 2121.iii)} \\
&&-2k_{n+1,m}\left( \ell _{b}\right) , \\
&=&2k_{n+1,m}(\ell _{a})k_{n+1,m}(\ell _{b})+1 \\
&>&0.
\end{eqnarray*}

\newpage

Case 2122: $\ell _{a}^{\prime }+\ell _{b}^{\prime }\geq m^{n}$, $%
q_{n,m}\left( \ell _{a}^{\prime }\right) \leq k_{n+1,m}(\ell _{a})$, $%
q_{n,m}\left( \ell _{b}^{\prime }\right) >k_{n+1,m}(\ell _{b})$, $%
q_{n,m}\left( \left( \ell _{a}+\ell _{b}\right) ^{\prime }\right)
>k_{n+1,m}(\left( \ell _{a}+\ell _{b}\right) )$.

\qquad Then%
\begin{eqnarray*}
\Sigma _{n+1,m}\left( \ell _{a},\ell _{b}\right) &=&\text{I + II + III + IV}
\\
&\geq &2\left( k_{n+1,m}(\ell _{a})k_{n+1,m}(\ell _{b})+k_{n+1,m}(\ell
_{a})+k_{n+1,m}(\ell _{b})\right) -m+1 \\
&&+q_{n,m}\left( \ell _{b}^{\prime }\right) -q_{n,m}\left( \ell _{a}^{\prime
}+\ell _{b}^{\prime }-m^{n}\right) +m-q_{n,m}\left( \ell _{a}^{\prime
}\right) \\
&&+\left( -q_{n,m}\left( \ell _{a}^{\prime }\right) \right) +\left(
q_{n,m}\left( \ell _{b}^{\prime }\right) -2k_{n+1,m}\left( \ell _{b}\right)
\right) \\
&&-\left( q_{n,m}\left( \ell _{a}^{\prime }+\ell _{b}^{\prime }-m^{n}\right)
-2k_{n+1,m}\left( \ell _{a}+\ell _{b}\right) \right) \\
&&-2q_{n+1,m}\left( \ell _{b}\right) , \\
&\geq &2\left( k_{n+1,m}(\ell _{a})k_{n+1,m}(\ell _{b})+k_{n+1,m}(\ell
_{a})\right) +1 \\
&&+2q_{n,m}\left( \ell _{b}^{\prime }\right) -2q_{n,m}\left( \ell
_{a}^{\prime }+\ell _{b}^{\prime }-m^{n}\right) -2q_{n,m}\left( \ell
_{a}^{\prime }\right) \\
&&+2k_{n+1,m}\left( \ell _{a}+\ell _{b}\right) \\
&&-2\left( k_{n+1,m}\left( \ell _{b}\right) +1\right) \text{ (by Remark 3),}
\\
&\geq &2\left( k_{n+1,m}(\ell _{a})k_{n+1,m}(\ell _{b})+k_{n+1,m}(\ell
_{a})\right) -1 \\
&&\text{ }+2q_{n,m}\left( \ell _{b}^{\prime }\right) -2\left( q_{n,m}\left(
\ell _{a}^{\prime }\right) +q_{n,m}\left( \ell _{b}^{\prime }\right)
-m\right) -2q_{n,m}\left( \ell _{a}^{\prime }\right) \text{ (by Lemma 7)} \\
&&+2\left( k_{n+1,m}(\ell _{a})+k_{n+1,m}(\ell _{b})\right) \text{ (by Lemma
6)} \\
&&-2k_{n+1,m}\left( \ell _{b}\right) \\
&\geq &2k_{n+1,m}(\ell _{a})k_{n+1,m}(\ell _{b})-1+2m\text{ (by Case 2122.ii)%
} \\
&>&0\text{. }
\end{eqnarray*}

\bigskip

Case 2211: $\ell _{a}^{\prime }+\ell _{b}^{\prime }\geq m^{n}$, $%
q_{n,m}\left( \ell _{a}^{\prime }\right) >k_{n+1,m}(\ell _{a})$, $%
q_{n,m}\left( \ell _{b}^{\prime }\right) \leq k_{n+1,m}(\ell _{b})$, $%
q_{n,m}\left( \left( \ell _{a}+\ell _{b}\right) ^{\prime }\right) \leq
k_{n+1,m}(\ell _{a}+\ell _{b})$.

\qquad Then%
\begin{eqnarray*}
\Sigma _{n+1,m}\left( \ell _{a},\ell _{b}\right) &=&\text{I + II + III + IV}
\\
&\geq &2\left( k_{n+1,m}(\ell _{a})k_{n+1,m}(\ell _{b})+k_{n+1,m}(\ell
_{a})+k_{n+1,m}(\ell _{b})\right) -m+1 \\
&&+q_{n,m}\left( \ell _{b}^{\prime }\right) -q_{n,m}\left( \ell _{a}^{\prime
}+\ell _{b}^{\prime }-m^{n}\right) +m-q_{n,m}\left( \ell _{a}^{\prime
}\right) \\
&&+\left( q_{n,m}\left( \ell _{a}^{\prime }\right) -2k_{n+1,m}\left( \ell
_{a}\right) \right) +\left( -q_{n,m}\left( \ell _{b}^{\prime }\right)
\right) -\left( -q_{n,m}\left( \ell _{a}^{\prime }+\ell _{b}^{\prime
}-m^{n}\right) \right) \\
&&-2q_{n+1,m}\left( \ell _{b}\right) , \\
&=&2\left( k_{n+1,m}(\ell _{a})k_{n+1,m}(\ell _{b})+k_{n+1,m}(\ell
_{b})\right) +1 \\
&&-2\left( k_{n+1,m}\left( \ell _{b}\right) +1\right) \text{ (by Remark 3),}
\\
&=&2k_{n+1,m}(\ell _{a})k_{n+1,m}(\ell _{b})-1 \\
&\geq &0\text{ unless }k_{n+1,m}(\ell _{b})=0\text{, which leads to a
contradiction as in } \\
&&\text{ \ \ \ \ \ \ \ \ \ Case 1111.}
\end{eqnarray*}%
\newpage

Case 2212: $\ell _{a}^{\prime }+\ell _{b}^{\prime }\geq m^{n}$, $%
q_{n,m}\left( \ell _{a}^{\prime }\right) >k_{n+1,m}(\ell _{a})$, $%
q_{n,m}\left( \ell _{b}^{\prime }\right) \leq k_{n+1,m}(\ell _{b})$, $%
q_{n,m}\left( \left( \ell _{a}+\ell _{b}\right) ^{\prime }\right)
>k_{n+1,m}(\ell _{a}+\ell _{b})$.

\qquad Then%
\begin{eqnarray*}
\Sigma _{n+1,m}\left( \ell _{a},\ell _{b}\right) &=&\text{I + II + III + IV}
\\
&\geq &2\left( k_{n+1,m}(\ell _{a})k_{n+1,m}(\ell _{b})+k_{n+1,m}(\ell
_{a})+k_{n+1,m}(\ell _{b})\right) -m+1 \\
&&+q_{n,m}\left( \ell _{b}^{\prime }\right) -q_{n,m}\left( \ell _{a}^{\prime
}+\ell _{b}^{\prime }-m^{n}\right) +m-q_{n,m}\left( \ell _{a}^{\prime
}\right) \\
&&+\left( q_{n,m}\left( \ell _{a}^{\prime }\right) -2k_{n+1,m}\left( \ell
_{a}\right) \right) +\left( -q_{n,m}\left( \ell _{b}^{\prime }\right) \right)
\\
&&-\left( q_{n,m}\left( \ell _{a}^{\prime }+\ell _{b}^{\prime }-m^{n}\right)
-2k_{n+1,m}(\ell _{a}+\ell _{b})\right) \\
&&-2q_{n+1,m}\left( \ell _{b}\right) , \\
&\geq &2\left( k_{n+1,m}(\ell _{a})k_{n+1,m}(\ell _{b})+k_{n+1,m}(\ell
_{b})\right) +1 \\
&&-2q_{n,m}\left( \ell _{a}^{\prime }+\ell _{b}^{\prime }-m^{n}\right) \\
&&+2k_{n+1,m}(\ell _{a}+\ell _{b}) \\
&&-2\left( k_{n+1,m}\left( \ell _{b}\right) +1\right) \text{ (by Remark 3),}
\\
&\geq &2k_{n+1,m}(\ell _{a})k_{n+1,m}(\ell _{b})-1 \\
&&-2\left( q_{n,m}\left( \ell _{a}^{\prime }\right) +q_{n,m}\left( \ell
_{b}^{\prime }\right) -m+1\right) \text{ (by Lemma 7)} \\
&&+2\left( k_{n+1,m}(\ell _{a})+k_{n+1,m}(\ell _{b})\right) \text{ (by Lemma
6),} \\
&\geq &2k_{n+1,m}(\ell _{a})\left( k_{n+1,m}(\ell _{b})+1\right) -1\text{
(by Case 2212.iii and } \\
&&\text{ \ \ \ \ \ \ \ Remark 5\ plus the fact that }\ell _{a}^{\prime }%
\text{ is less than }m^{n}\text{ } \\
&&\text{ \ \ \ \ \ \ \ \ so }q_{n,m}\left( \ell _{a}^{\prime }\right) \text{
is less than }m\text{),} \\
&\geq &0\text{ unless }k_{n+1,m}(\ell _{a})=0\text{ } \\
&\Rightarrow &k_{n+1,m}(\ell _{b})=0 \\
&\Rightarrow &\text{(by Case 2212.iii \& Remark 5) }\ell _{b}\left(
=k_{n+1,m}(\ell _{b})m^{n}+\ell _{b}^{\prime }\right) =0\text{, } \\
&&\text{ \ \ \ \ \ \ a contradiction.}
\end{eqnarray*}%
\newpage

Case 2221: $\ell _{a}^{\prime }+\ell _{b}^{\prime }\geq m^{n}$, $%
q_{n,m}\left( \ell _{a}^{\prime }\right) >k_{n+1,m}(\ell _{a})$, $%
q_{n,m}\left( \ell _{b}^{\prime }\right) >k_{n+1,m}(\ell _{b})$, $%
q_{n,m}\left( \left( \ell _{a}+\ell _{b}\right) ^{\prime }\right) \leq
k_{n+1,m}(\ell _{a}+\ell _{b})$.

\qquad Then%
\begin{eqnarray*}
\Sigma _{n+1,m}\left( \ell _{a},\ell _{b}\right) &=&\text{I + II + III + IV}
\\
&\geq &2\left( k_{n+1,m}(\ell _{a})k_{n+1,m}(\ell _{b})+k_{n+1,m}(\ell
_{a})+k_{n+1,m}(\ell _{b})\right) -m+1 \\
&&+q_{n,m}\left( \ell _{b}^{\prime }\right) -q_{n,m}\left( \ell _{a}^{\prime
}+\ell _{b}^{\prime }-m^{n}\right) +m-q_{n,m}\left( \ell _{a}^{\prime
}\right) \\
&&+\left( q_{n,m}\left( \ell _{a}^{\prime }\right) -2k_{n+1,m}\left( \ell
_{a}\right) \right) +\left( q_{n,m}\left( \ell _{b}^{\prime }\right)
-2k_{n+1,m}\left( \ell _{b}\right) \right) \\
&&-\left( -q_{n,m}\left( \ell _{a}^{\prime }+\ell _{b}^{\prime
}-m^{n}\right) \right) \\
&&-2q_{n+1,m}\left( \ell _{b}\right) , \\
&=&2k_{n+1,m}(\ell _{a})k_{n+1,m}(\ell _{b})+1 \\
&&+2q_{n,m}\left( \ell _{b}^{\prime }\right) \\
&&-2\left( k_{n+1,m}\left( \ell _{b}\right) +1\right) \text{ (by Remark 3),}
\\
&\geq &2k_{n+1,m}(\ell _{a})k_{n+1,m}(\ell _{b})+1 \\
&&+2\left( k_{n+1,m}(\ell _{b})+1\right) \text{ (by Case 2221.iii)} \\
&&-2\left( k_{n+1,m}\left( \ell _{b}\right) +1\right) \\
&=&2k_{n+1,m}(\ell _{a})k_{n+1,m}(\ell _{b})+1 \\
&>&0\text{. }
\end{eqnarray*}

Case 2222: $\ell _{a}^{\prime }+\ell _{b}^{\prime }\geq m^{n}$, $%
q_{n,m}\left( \ell _{a}^{\prime }\right) >k_{n+1,m}(\ell _{a})$, $%
q_{n,m}\left( \ell _{b}^{\prime }\right) >k_{n+1,m}(\ell _{b})$, $%
q_{n,m}\left( \left( \ell _{a}+\ell _{b}\right) ^{\prime }\right)
>k_{n+1,m}(\left( \ell _{a}+\ell _{b}\right) )$.

\qquad Then%
\begin{eqnarray*}
\Sigma _{n+1,m}\left( \ell _{a},\ell _{b}\right) &=&\text{I + II + III + IV}
\\
&\geq &2\left( k_{n+1,m}(\ell _{a})k_{n+1,m}(\ell _{b})+k_{n+1,m}(\ell
_{a})+k_{n+1,m}(\ell _{b})\right) -m+1 \\
&&+q_{n,m}\left( \ell _{b}^{\prime }\right) -q_{n,m}\left( \ell _{a}^{\prime
}+\ell _{b}^{\prime }-m^{n}\right) +m-q_{n,m}\left( \ell _{a}^{\prime
}\right) \\
&&+\left( q_{n,m}\left( \ell _{a}^{\prime }\right) -2k_{n+1,m}\left( \ell
_{a}\right) \right) +\left( q_{n,m}\left( \ell _{b}^{\prime }\right)
-2k_{n+1,m}\left( \ell _{b}\right) \right) \\
&&-\left( q_{n,m}\left( \ell _{a}^{\prime }+\ell _{b}^{\prime }-m^{n}\right)
-2k_{n+1,m}(\ell _{a}+\ell _{b})\right) \\
&&-2q_{n+1,m}\left( \ell _{b}\right) , \\
&\geq &2k_{n+1,m}(\ell _{a})k_{n+1,m}(\ell _{b})+1 \\
&&+2q_{n,m}\left( \ell _{b}^{\prime }\right) -2q_{n,m}\left( \ell
_{a}^{\prime }+\ell _{b}^{\prime }-m^{n}\right) \\
&&+2k_{n+1,m}(\ell _{a}+\ell _{b}) \\
&&-2\left( k_{n+1,m}\left( \ell _{b}\right) +1\right) \text{ (by Remark 3),}
\\
&\geq &2k_{n+1,m}(\ell _{a})k_{n+1,m}(\ell _{b})-1 \\
&&+2q_{n,m}\left( \ell _{b}^{\prime }\right) -2\left( q_{n,m}\left( \ell
_{a}^{\prime }\right) +2q_{n,m}\left( \ell _{b}^{\prime }\right) -m+1\right) 
\text{ (by Lemma 7)} \\
&&+2\left( k_{n+1,m}(\ell _{a})+k_{n+1,m}(\ell _{b})\right) \text{ (by Lemma
6)} \\
&&-2k_{n+1,m}\left( \ell _{b}\right) \\
&\geq &2k_{n+1,m}(\ell _{a})k_{n+1,m}(\ell _{b})-1 \\
&&+0\text{ (by Remark 5, }q_{n,m}\left( \ell _{a}^{\prime }\right) \leq m-1%
\text{)} \\
&&+2k_{n+1,m}(\ell _{a}) \\
&=&2k_{n+1,m}(\ell _{a})\left( k_{n+1,m}(\ell _{b})+1\right) -1 \\
&\geq &0\text{ unless }k_{n+1,m}(\ell _{a})=0\text{.}
\end{eqnarray*}

If $k_{n+1,m}(\ell _{a})=0$, then $k_{n+1,m}(\ell _{b})=0$, and by the
second inequality above,%
\begin{eqnarray*}
\Sigma _{n+1,m}\left( \ell _{a},\ell _{b}\right) &\geq &2k_{n+1,m}(\ell
_{a})k_{n+1,m}(\ell _{b})+1 \\
&&+2q_{n,m}\left( \ell _{b}^{\prime }\right) -2q_{n,m}\left( \ell
_{a}^{\prime }+\ell _{b}^{\prime }-m^{n}\right) \\
&&+2k_{n+1,m}(\ell _{a}+\ell _{b}) \\
&&-2\left( k_{n+1,m}\left( \ell _{b}\right) +1\right) \text{,} \\
&=&2\cdot 0+1 \\
&&+2q_{n,m}\left( \ell _{b}^{\prime }\right) -2q_{n,m}\left( \ell
_{a}^{\prime }+\ell _{b}^{\prime }-m^{n}\right) \\
&&+2k_{n+1,m}(\ell _{a}+\ell _{b}) \\
&&-2\left( 0+1\right) \text{,} \\
&\geq &2k_{n+1,m}(\ell _{a}+\ell _{b})-1\text{ } \\
&&\text{ \ \ \ \ (as above by Lemma 7 \& Remark 5),} \\
&>&0
\end{eqnarray*}%
since $k_{n+1,m}\left( \ell _{a}\right) =0\Rightarrow \ell _{a}=\ell
_{a}^{\prime }$, $k_{n+1,m}\left( \ell _{b}\right) =0\Rightarrow \ell
_{b}=\ell _{b}^{\prime },$ so $\ell _{a}+\ell _{b}=\ell _{a}^{\prime }+\ell
_{b}^{\prime }\geq $ $m^{n}$ (by Case 2222.i) and $k_{n+1,m}(\ell _{a}+\ell
_{b})\geq 1$.
\end{proof}

\section{Conclusions and Comments}

\begin{corollary}
$\forall \ell ,m,n\in \mathbb{N},$ $0\leq \ell \leq m^{n},$ $\left\vert
\Theta \right\vert \left( S(n,m);\ell \right) =$ $\left\vert \Theta \left(
Lex^{-1}\left( \ell \right) \right) \right\vert $ , \textit{i.e. the
(generalized and extended) Sierpinski graph, }$S(n,m)$, has Lex nested
solutions for the edge-isoperimetric problem.
\end{corollary}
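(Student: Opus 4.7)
The proof is a direct assembly of the machinery built up through the paper. The plan is to observe that Corollary 1 is literally the statement of Conjecture 1, which is in turn the $s=0$, $t=m$ specialization of Conjecture 2 (since $S_{0,m}(n,m) = S(n,m)$ with no exterior edges, and $\Theta_{0,m}(S) = \Theta(S)$). So the only task is to certify that Conjecture 2 has now been fully established.

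The first step is to invoke Theorem 1, which reduces Conjecture 2 to Conjecture 3: by two Steiner operations (compression, applied cyclically to yield $Comp_\infty$, and then subadditivation) any $\ell$-set $S \subseteq V_{S_{s,t}(n+1,m)}$ that is optimal and lexicographically maximal can be deformed to $Lex^{-1}(\{1,\dots,\ell\})$ without increasing $|\Theta_{s,t}|$, provided the subadditivity+$\sigma$ inequality of Conjecture 3 holds at level $n$. The induction on $n$ in the proof of Theorem 1 is already complete, so it only remains to supply Conjecture 3.

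The second step is to cite the proof of Conjecture 3 given in Section 3, which proceeds by induction on $n$ with the base case $n=1$ (where $S(1,m) = K_m$ and $\Sigma_{1,m}(\ell_a,\ell_b) = 2\ell_b(\ell_a-1) \geq 0$), and whose inductive step decomposes $\Sigma_{n+1,m}(\ell_a,\ell_b)$ into the four pieces $\mathbf{I}+\mathbf{II}+\mathbf{III}+\mathbf{IV}$ and verifies non-negativity across all sixteen cases determined by the four binary conditionals. The auxiliary Lemmas 6--8 on how $k_{n,m}$ and $q_{n,m}$ interact with addition modulo $m^n$ are exactly what the case analysis consumes.

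The main obstacle was entirely absorbed into the proof of Conjecture 3, so there is no new obstacle here: the corollary follows by a one-line specialization. Explicitly, for any $0 \leq \ell \leq m^n$, apply Conjecture 2 with $s=0$, $t=m$ to conclude that $|\Theta|(S(n,m);\ell) = |\Theta_{0,m}|(S_{0,m}(n,m);\ell) = |\Theta_{0,m}(Lex^{-1}(n,m;\ell))| = |\Theta(Lex^{-1}(\ell))|$, which is the statement of Corollary 1.
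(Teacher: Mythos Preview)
Your proposal is correct and follows essentially the same approach as the paper's own proof: identify the corollary as the $s=0$, $t=m$ specialization of Conjecture~2, then invoke Theorem~1 together with the Section~3 proof of Conjecture~3 to conclude. Your write-up is in fact more explicit than the paper's two-sentence proof (which merely cites Theorem~1 and the established Conjecture~3), but the logical content is identical.
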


\begin{proof}
This is the special case $s=0,t=m$ of Conjecture 1. It follows from Theorem
1 and the proof of Conjecture 2 in the Appendix.
\end{proof}

\begin{corollary}
Any $S(n,m)$ with external edges (see \cite{H-K-M-P}) has nested solutions
for $EIP$.
\end{corollary}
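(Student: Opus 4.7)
The plan is to reduce the statement to Conjecture 2 (the decorated case), which has just been proved by combining Theorem 1 with the sixteen-case verification of Conjecture 3. Recall that $S_{s,t}(n,m)$ already models an arbitrary configuration of "positive" external edges (attached to corners with labels in $I$, whose outside endpoints are counted as members of $S$) and "negative" external edges (attached to corners with labels in $K$, counted as members of the complement), with the middle block $J$ receiving no external edge. So the content of Corollary 2 is really that the specific labelling assumed in the definition of $S_{s,t}(n,m)$ is not a restriction.

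Next I would invoke the symmetry of $S(n,m)$ under permutations of $\{0,1,\dots,m-1\}$: every such permutation $\pi$ induces a graph automorphism acting on $V_{S(n,m)}=\{0,1,\dots,m-1\}^n$ by coordinate-wise relabelling, which sends corner vertices $i^n$ to $\pi(i)^n$ bijectively. Given $S(n,m)$ decorated with an arbitrary pattern of external edges at its corner vertices, I would partition the corner labels into three sets according to whether their external endpoints are to be regarded as inside $S$, outside $S$, or absent, and then pick $\pi$ so that these three sets become $I=\{0,\dots,s-1\}$, $K=\{s+t,\dots,m-1\}$, $J=\{s,\dots,s+t-1\}$ for the appropriate $s,t$. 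The pulled-back graph is precisely $S_{s,t}(n,m)$, so Conjecture 2 supplies a nested chain of Lex-optimal sets. Transporting this chain back through $\pi^{-1}$ yields a nested chain of optimal sets for the original decorated graph, establishing the corollary.

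The only potential obstruction is verifying that the external-edge model in \cite{H-K-M-P} really is covered by the $S_{s,t}(n,m)$ framework — i.e., that external edges occur only at corner vertices, with at most one per corner, and are classified purely by whether the outside endpoint lies in $S$ or its complement. If the model allows multiple external edges at a single corner, or weighted external edges, a small additional observation is needed: since the boundary contribution of each external edge at a fixed corner is independent, replacing a corner's bundle of external edges by an appropriate net $\pm 1$ weighting preserves the minimizer, and the compression/subadditivation proofs of Theorem 1 go through verbatim with these weights. Beyond this bookkeeping, no new ideas are required.
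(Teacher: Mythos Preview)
Your argument is correct and is essentially the same as the paper's: classify the corners carrying external edges into the three classes $I$, $J$, $K$, use a permutation of $\{0,1,\dots,m-1\}$ (an automorphism of $S(n,m)$) to put them in the standard block form, and invoke the already-proved Conjecture~2 for $S_{s,t}(n,m)$. The paper states this in two sentences without spelling out the automorphism; your version is more explicit, and your closing remarks about multiple or weighted external edges are extra caution that the paper does not address.
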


\begin{proof}
The vertices of $S(n,m)$ with external edges may be classified as in $I$, $J$
or $K$ and totally ordered so that $I<J<K$ without changing the structure.
The Corollary then follows from Theorem 1 for $S_{s,t}(n,m)$.
\end{proof}

\begin{corollary}
(Theorem of \cite{S-V}) The bisection width of $S(n,m)$ is given by the
formula%
\begin{equation*}
bw(S(n,m))=\left\{ 
\begin{tabular}{ll}
$\frac{m^{2}}{4}$ & if $m$ is even, \\ 
$n\left\lfloor \frac{m}{2}\right\rfloor ^{2}+\left\lfloor \frac{m}{2}%
\right\rfloor $ & if $m$ is odd%
\end{tabular}%
\text{.}\right. 
\end{equation*}
\end{corollary}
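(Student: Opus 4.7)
The plan is to reduce the bisection width to an isoperimetric profile value and then evaluate that value using the recurrence of Lemma 3. By definition, $bw(S(n,m)) = |\Theta|(S(n,m); \lfloor m^n/2\rfloor)$, and Corollary 1 gives
\begin{equation*}
|\Theta|(S(n,m);\ell) = |\Theta(Lex^{-1}(n,m;\ell))|,
\end{equation*}
so it suffices to evaluate $|\Theta(Lex^{-1}(n,m;\lfloor m^n/2\rfloor))|$. The argument then splits on the parity of $m$.

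If $m$ is even, take $\ell = m^n/2$. Then $k_{n,m}(\ell) = \lfloor m/2\rfloor = m/2$ and $\ell' = \ell - (m/2)m^{n-1} = 0$, so $q_{n-1,m}(\ell') = 0 \leq k_{n,m}(\ell)$. Lemma 3 collapses immediately to $|\Theta(Lex^{-1}(n,m;\ell))| = (m/2)(m-m/2) + 0 - 0 = m^2/4$, matching the even-case formula.

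If $m$ is odd, take $\ell = (m^n-1)/2$. One verifies algebraically that $\ell = \lfloor m/2\rfloor m^{n-1} + (m^{n-1}-1)/2$, hence $k_{n,m}(\ell) = (m-1)/2$ and $\ell' = (m^{n-1}-1)/2$. The key technical step is to show
\begin{equation*}
q_{n-1,m}\bigl((m^{n-1}-1)/2\bigr) = (m-1)/2,
\end{equation*}
which I would prove by plugging into the closed form of Lemma 2: the argument of the floor becomes $(m-1)/2 - (m-1)/(m^{n-1}-1)$, and since $(m-1)/2$ is an integer and the subtracted quantity lies in $(0,1]$ for $n\geq 2$ (with the boundary value $n=2$ handled separately), the floor equals $(m-3)/2$, giving $q_{n-1,m}(\ell')=(m-1)/2$. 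This value equals $k_{n,m}(\ell)$, placing us in the first branch of the Lemma~3 recurrence with adjustment $-(m-1)/2$.

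Setting $f(n) = |\Theta(Lex^{-1}(n,m;(m^n-1)/2))|$, the recurrence simplifies to
\begin{equation*}
f(n) = \tfrac{(m-1)(m+1)}{4} + f(n-1) - \tfrac{m-1}{2} = f(n-1) + \lfloor m/2\rfloor^2,
\end{equation*}
with initial value $f(1) = \ell(m-\ell) = \lfloor m/2\rfloor\lceil m/2\rceil = \lfloor m/2\rfloor^2 + \lfloor m/2\rfloor$. Solving the linear recurrence yields $f(n) = n\lfloor m/2\rfloor^2 + \lfloor m/2\rfloor$, completing the odd case. The only genuinely non-routine step is the evaluation of $q_{n-1,m}((m^{n-1}-1)/2)$; everything else is bookkeeping with the recurrence.
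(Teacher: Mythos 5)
Your proposal is correct, and it differs from the paper in a substantive way: both arguments begin identically, reducing $bw(S(n,m))$ to $\left\vert \Theta \right\vert (S(n,m);\left\lfloor m^{n}/2\right\rfloor )$ and then to $\left\vert \Theta (Lex^{-1}(\left\lfloor m^{n}/2\right\rfloor ))\right\vert $ via the main theorem, but at that point the paper simply cites \cite{S-V} for the numerical evaluation (remarking that their even-case derivation is easy and their odd-case proof inadequate), whereas you evaluate the lex boundary yourself from the recurrence of Lemma 3. Your computation is right: in the odd case $\ell =(m^{n}-1)/2$ decomposes as $k=\left\lfloor m/2\right\rfloor $, $\ell ^{\prime }=(m^{n-1}-1)/2$, your evaluation $q_{n-1,m}(\ell ^{\prime })=(m-1)/2=k$ lands in the first branch, and $\frac{(m-1)(m+1)}{4}-\frac{m-1}{2}=\left\lfloor m/2\right\rfloor ^{2}$ together with $f(1)=\left\lfloor m/2\right\rfloor ^{2}+\left\lfloor m/2\right\rfloor $ gives the claimed closed form. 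What your route buys is self-containment: it removes the dependence on \cite{S-V} entirely, which is desirable precisely because the paper itself flags that reference's odd-case argument as deficient. One point deserves an explicit sentence in your write-up: in the even case you take $k_{n,m}(m^{n}/2)=m/2$ and $\ell ^{\prime }=0$, which follows Lemma 1 but contradicts the parenthetical ``so $0<\ell ^{\prime }\leq m^{n-1}$'' in the statement of Lemma 3; the two readings are not equivalent (the alternative $k=m/2-1$, $\ell ^{\prime }=m^{n-1}$ feeds back $m^{2}/4+1$ from the recurrence), so you should note that the $\ell ^{\prime }=0$ convention is the one under which the recurrence reproduces the directly verifiable count of $(m/2)^{2}$ edges between the first $m/2$ full copies of $S(n-1,m)$ and the rest.
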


\begin{proof}
For any graph, $G$, $bw(G)=\left\vert \Theta \right\vert (G;\left\lfloor 
\frac{\left\vert V_{G}\right\vert }{2}\right\rfloor )$, so it follows from
Theorem 1 that $bw(S(n,m))=\left\vert \Theta (Lex^{-1}\left( \left\lfloor 
\frac{m^{n}}{2}\right\rfloor \right) \right\vert $. Savitha \& Vijayakumar 
\cite{S-V} give a beautiful proof of this corollary for $k$ even (and the
formula is easy to derive in that case). They also derive the correct
formula for $k$ odd, but their proof in that case is not adequate.
\end{proof}

\begin{corollary}
$\max \left\{ \left\vert \Theta \right\vert \left( S(n,m),\ell \right)
:0\leq \ell \leq m^{n}\right\} =n\left\lfloor \frac{m}{2}\right\rfloor
^{2}+\left\lfloor \frac{m}{2}\right\rfloor $.
\end{corollary}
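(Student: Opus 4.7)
By Theorem~1 we have $|\Theta|(S(n,m);\ell) = |\Theta(Lex^{-1}(n,m;\ell))|$ for every $\ell$, so the quantity in question equals $\max_\ell |\Theta(Lex^{-1}(n,m;\ell))|$. The plan is to establish the lower bound $\max \ge n\lfloor m/2\rfloor^2 + \lfloor m/2\rfloor$ directly from Corollary~3 and the matching upper bound by induction on $n$ via Lemma~3.

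The lower bound is immediate: evaluating at $\ell = \lfloor m^n/2 \rfloor$ gives the bisection width $bw(S(n,m))$, which Corollary~3 identifies as $n\lfloor m/2\rfloor^2 + \lfloor m/2\rfloor$ in the odd-$m$ branch. For the upper bound, the base case $n = 1$ is the elementary inequality $\max_\ell \ell(m-\ell) = \lfloor m/2\rfloor\lceil m/2\rceil$, which for $m$ odd equals $\lfloor m/2\rfloor^2 + \lfloor m/2\rfloor$. In the inductive step, Lemma~3 decomposes
\[
|\Theta(Lex^{-1}(n{+}1,m;\ell))| \;=\; k(m-k) + |\Theta(Lex^{-1}(n,m;\ell'))| + C,
\]
where $k = k_{n+1,m}(\ell)$ and $C$ is the correction term determined by $q = q_{n,m}(\ell')$. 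Applying the inductive hypothesis to the middle term reduces matters to showing $k(m-k) + C \le \lfloor m/2\rfloor^2$.

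The principal obstacle is that the naive estimate $k(m-k) + C \le \lfloor m/2\rfloor^2 + \lfloor m/2\rfloor + \max C$ is too weak by $\lfloor m/2\rfloor$, so one must exploit the coupling between $k$, $q$, and $\ell'$ imposed by the Lex structure. When $k$ attains its extremum $\lfloor m/2\rfloor$, case analysis on $q \le k$ (giving $C = -q$) versus $q > k$ (giving $C = q-2k$) shows that $k(m-k) + C$ can exceed $\lfloor m/2\rfloor^2$ only when $\ell'$ is forced into a restricted configuration at which the IH bound for $|\Theta(Lex^{-1}(n,m;\ell'))|$ is not saturated; the resulting slack absorbs the excess. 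Formalising this ``rigidity versus slack'' trade-off is the heart of the proof and is likely to require a strengthened inductive invariant that tracks $q_{n,m}(\ell)$ alongside the maximum, in the spirit of (but substantially simpler than) the 16-case analysis of Conjecture~3. Alternatively, one can invoke Conjecture~3 itself, now established, to bound $|\Theta|$ at off-bisection $\ell$ directly in terms of $|\Theta|$ at $\lfloor m^n/2 \rfloor$, reducing the upper bound to the Corollary~3 computation.
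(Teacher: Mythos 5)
The paper offers no proof of this corollary at all---it is stated bare, evidently as an intended immediate consequence of Corollary~3 and Theorem~1---so there is no argument of the author's to measure yours against. Judged on its own terms, your proposal is a plan rather than a proof, and its central step is missing: the inequality $k(m-k)+C\le\lfloor m/2\rfloor^{2}$ in the inductive step (or the ``rigidity versus slack'' refinement you invoke when it fails) is exactly the content of the upper bound, and you leave it unestablished. The suggested fallback of invoking Conjecture~3 does not rescue it: subadditivity$+\sigma$ bounds $|\Theta|(\ell_{a}+\ell_{b})$ \emph{from above} by the sum of the values at the smaller arguments $\ell_{a},\ell_{b}$, so it can never be used to bound an off-bisection value of the profile by the value at $\lfloor m^{n}/2\rfloor$.

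The second, more serious gap is that your argument addresses only odd $m$. Your lower bound comes from the odd-$m$ branch of Corollary~3; for even $m$ the bisection width is $m^{2}/4$, which for $n\ge 2$ is far below $n\lfloor m/2\rfloor^{2}+\lfloor m/2\rfloor$, so the lower bound is not ``immediate'' there. In fact the stated formula fails for even $m$: already for $n=1$ the profile of $S(1,m)=K_{m}$ is $\ell(m-\ell)$, with maximum $(m/2)^{2}$ rather than the claimed $(m/2)^{2}+m/2$ (e.g.\ $m=2$, $n=1$ gives maximum $1$ versus the claimed $2$, and $S(2,2)$ is a path with maximum $1$ versus the claimed $3$). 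So the corollary must be read as restricted to odd $m$, where the right-hand side coincides with the bisection width of Corollary~3; a proof should either state that restriction or treat the even case separately, and yours does neither. For odd $m$ the natural route is the one you sketch---show by induction via Lemma~3 that the profile is maximized at the bisection point and then quote Corollary~3---but the inductive inequality you flag as the ``heart of the proof'' still has to be carried out, and as it stands your proposal does not do so.
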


\begin{corollary}
The Cheeger constant, $h(G)=\min \left\{ \frac{\left\vert \Theta \right\vert
\left( G;\ell \right) }{\ell }:\ell \leq \frac{\left\vert V_{G}\right\vert }{%
2}\right\} $ (See Wikipedia for background), of $S(n,m)$ is 
\begin{equation*}
h(S(n,m))=\left\{ 
\begin{tabular}{ll}
$\frac{m^{2}}{4}/\frac{m^{n}}{2}=\frac{1}{2m^{n-2}}$ & if $m$ is even, \\ 
$\frac{\left( m-1\right) }{2}\frac{\left( m+1\right) }{2}/\frac{\left(
m-1\right) m^{n-1}}{2}=\frac{m+1}{2m^{n-1}}$ & if $m$ is odd%
\end{tabular}%
\text{.}\right. 
\end{equation*}
\end{corollary}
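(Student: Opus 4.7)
The plan is to combine Corollary 1 with a direct evaluation at the natural candidate minimizer $\ell^{*}=\lfloor m/2\rfloor\,m^{n-1}$.

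By Corollary 1, lex initial segments are optimal for the EIP on $S(n,m)$, so
\[
h(S(n,m))=\min_{1\le\ell\le m^n/2}\ \frac{|\Theta(Lex^{-1}(n,m;\ell))|}{\ell}.
\]
For the upper bound, the set $Lex^{-1}(n,m;\ell^{*})$ is exactly the union of the first $\lfloor m/2\rfloor$ top-level subgraphs $\{i\}\times S(n-1,m)$; since any two distinct top-level subgraphs are joined by exactly one corner-to-corner edge, the edge boundary has cardinality $\lfloor m/2\rfloor\lceil m/2\rceil$, yielding ratio $\lceil m/2\rceil/m^{n-1}$. Substituting $\lceil m/2\rceil=m/2$ and $\ell^{*}=m^n/2$ for $m$ even gives $1/(2m^{n-2})$, and $\lceil m/2\rceil=(m+1)/2$, $\ell^{*}=(m-1)m^{n-1}/2$ for $m$ odd gives $(m+1)/(2m^{n-1})$.

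For the matching lower bound, I would prove by induction on $n$ that
\[
|\Theta(Lex^{-1}(n,m;\ell))|\ \ge\ \ell\cdot\frac{\lceil m/2\rceil}{m^{n-1}}\qquad(1\le\ell\le m^n/2).
\]
The base $n=1$ reduces to $m-\ell\ge\lceil m/2\rceil$, which holds for $\ell\le\lfloor m/2\rfloor$. For the inductive step, write $\ell=km^{n-1}+\ell'$ with $k=k_{n,m}(\ell)\le\lfloor m/2\rfloor$ and $0\le\ell'<m^{n-1}$, and expand via Lemma 3. The top-level term $k(m-k)\ge k\lceil m/2\rceil$ already covers the $km^{n-1}$-weighted share of the target, while the internal term $|\Theta(Lex^{-1}(n-1,m;\ell'))|$ carries the inductive bound with a factor-$m$ headroom (the IH denominator is $m^{n-2}$ while the target denominator is $m^{n-1}$), which dominates the $O(m)$-sized correction coming from the two $q_{n-1,m}(\ell')$-branches of Lemma 3.

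The hardest part will be the boundary sub-cases where one of the two slacks vanishes: $k=0$ with $\ell'>m^{n-1}/2$, where the IH must be applied via the duality $|\Theta(Lex^{-1}(n-1,m;\ell'))|=|\Theta(Lex^{-1}(n-1,m;m^{n-1}-\ell'))|$ and the headroom correspondingly shrinks; and $k=\lfloor m/2\rfloor$ with $\ell'>0$, where $k(m-k)=k\lceil m/2\rceil$ already saturates the $k$-level inequality. In both regimes I expect to close the gap by using the positive contribution in the $q_{n-1,m}(\ell')>k$ branch of Lemma 3 together with the explicit formula for $q$ in Lemma 2, so that the correction grows in proportion to $\ell'$. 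Matching upper and lower bounds then yields $h(S(n,m))=\lceil m/2\rceil/m^{n-1}$, whence the two cases of the displayed formula.
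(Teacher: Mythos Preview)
The paper states this corollary without proof, so there is no argument to compare your proposal against. Your upper bound at $\ell^{*}=\lfloor m/2\rfloor m^{n-1}$ is correct and complete. The lower bound, however, is only a sketch (you write ``I would prove'' and ``I expect to close the gap''), and the sketch has a concrete defect.

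The issue is your headroom claim. In the boundary case $m$ odd, $k=\lfloor m/2\rfloor=(m-1)/2$, the top-level slack $k(m-k)-k\lceil m/2\rceil$ is exactly zero, so the entire burden falls on the internal term plus the Lemma~3 correction. Take $\ell'=1$: the inductive hypothesis gives only $|\Theta(Lex^{-1}(n-1,m;1))|\ge \lceil m/2\rceil/m^{n-2}$, so your ``factor-$m$ headroom'' is $(m-1)\lceil m/2\rceil/m^{n-1}$, which tends to $0$ as $n$ grows. Meanwhile $q_{n-1,m}(1)=1\le k$, so you are in the $q\le k$ branch of Lemma~3 and the correction is $-1$, not the positive $q-2k$ you anticipated. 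Thus for small $\ell'$ and large $n$ the multiplicative headroom is $o(1)$ while the additive correction is $-1$, and your inequality chain collapses. (The underlying statement is still true here---e.g.\ for $m=3$, $\ell=3^{n-1}+1$ one has $|\Theta|=3\ge 2+2/3^{n-1}$---but only because the exact value $|\Theta(Lex^{-1}(n-1,m;1))|=m-1$ is far larger than the inductive lower bound.)

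To repair the argument you need either a strengthened induction hypothesis (one that is not vacuously weak for small $\ell'$, for instance also recording $|\Theta(Lex^{-1}(n,m;\ell))|\ge m-1$ for $1\le\ell\le m^n-1$), or a separate direct estimate for the regime where $\ell'$ is small relative to $m^{n-1}$. Until one of those is supplied, the lower-bound half of the proof is not established.
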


\end{document}